\documentclass{article}
\usepackage{amsthm,amsmath,amssymb}
\usepackage[margin=.5in,bottom=1in]{geometry} 
\usepackage{url}
\usepackage{caption}
\usepackage{subcaption}
\usepackage{graphicx}
\usepackage{epstopdf}
\usepackage{color}
\usepackage{framed}
\usepackage{lscape}
\usepackage{rotating}
\usepackage{algorithm}
\usepackage[noend]{algpseudocode}
\usepackage{grffile}
\usepackage{multirow}
\usepackage{xcolor}
\usepackage{comment}
\usepackage{xr}

\newtheorem{thm}{Theorem}[section]

\newtheorem{lem}[thm]{Lemma}
\newtheorem{cor}[thm]{Corollary}
\newtheorem{prop}[thm]{Proposition}

\theoremstyle{definition}
\newtheorem{rmk}{Remark}

\newcommand{\D}{\mathrm{D}}

\newcommand{\tr}{\mathrm{tr}}

\newcommand{\EE}{\mathbb{E}}
\newcommand{\R}{\mathbb{R}}

\newcommand{\calG}{\mathcal{G}}
\newcommand{\calP}{\mathcal{P}}
\newcommand{\calU}{\mathcal{U}}

\newcommand{\SO}{\mathrm{SO}}

\newcommand{\la}{\langle}
\newcommand{\ra}{\rangle}

\newcommand{\what}{\widehat}
\newcommand{\wtilde}{\widetilde}

\newcommand{\ZZ}{\mathbb{Z}}
\newcommand{\RR}{\mathbb{R}}

\newcommand{\sign}{\mathop{\mathrm{sign}}}

\DeclareMathOperator*{\argmax}{\arg\!\max}

\begin{document}
\title{\textbf{On the stability of scale-space metrics}}
\author{\textbf{William Leeb} \\ \\
School of Mathematics \\
University of Minnesota, Twin Cities  \\
Minneapolis, MN}
\date{}
\maketitle

\abstract{
We study the stability of a classical family of
metrics defined over functions' Gaussian scale-space
representations,
focusing on the comparison of images (functions of two variables).
These metrics have precedents both in harmonic analysis,
specifically the theory of Besov spaces, and in classical
methods of image processing; special cases
are also known to be metrically equivalent to certain
Wasserstein distances.
We quantify these metrics' robustness to geometric
deformations,
and introduce rotationally-invariant
versions that are stable to
changes in angle when comparing
tomographic projections.
We also describe computationally efficient algorithms
for evaluating the metrics from finite samples,
and prove their robustness to additive noise.
The results are illustrated through numerical experiments.
}

\section{Introduction}

This paper studies a family of metrics between
real-variable functions, exploring their stability
to geometric deformations and additive noise.
The definitions of these metrics makes use of a function's ``scale-space''
representation, in which the function is
convolved with a sequence of Gaussian kernels of varying widths.
Each function is thereby replaced by a stack
of functions, of varying resolution.
The distance $\D_{\alpha,p,r}(f,g)$ between two functions $f$ and $g$
in $L^r \cap L^1$ is then defined as
a weighted sum of the distances across each stack;
the precise definition is given in Section \ref{sec:metrics_defined}.

Such metrics, and the ideas underlying their construction, have previously appeared in
numerous works:

\begin{itemize}

\item
In the harmonic analysis literature,
the norms that induce these metrics are
equivalent to the dual norms of Besov spaces
(certain classes of smooth functions)
\cite{triebel1983theory, sawano2018theory}.

\item
Classical imaging processing techniques make use
of an image's
scale-space representation via Gaussian smoothing
\cite{lindeberg1994scale}.
For instance, features extracted from the scale-space representation
of a function are used in
the well-known SIFT algorithm
for feature detection
\cite{lowe2004distinctive, lowe1999object}.

\item
The special case where $p=r=1$
and $0 < \alpha < 1$
is known to be metrically equivalent to
the Wasserstein distance
with ground distance
$|x-y|^{\alpha}$;
see \cite{leeb2016holder},
as well as the related works
\cite{jacobs2008approximate, craig2015wavelet},
which employ multiscale analysis via wavelets
rather than Gaussians.

\item
Analogous metrics over more abstract spaces
that exhibit a multiscale structure
(such as spaces endowed with hierarchical tree metrics)
have been studied in data science
applications; for instance, see
\cite{leeb2018mixed, leeb2016holder, leeb2018approximating, mishne2017datadriven,
mishne2016hierarchical, lin2026joint, charikar2002similarity}.

\end{itemize}

In the present work, we explore the stability of these
metrics $\D_{\alpha,p,r}$ to distortions of their inputs,
both under measure-preserving geometric deformations
and additive noise. We focus specifically on applications to image-processing
(i.e.\ to functions on  $\R^2$, and their discretizations),
though many of our results may be generalized to
other dimensions.
One of the scientific motivations for our work
is the robust comparison of 2D tomographic projections
of 3D functions, a setting which occurs in
applications like cryo-electron microscopy (cryo-EM)
\cite{bendory2020single, singer2020computational, van2000single, doerr2016single, vulovic2013image}.
Various metrics related to Wasserstein distances have been
proposed for image comparison in cryo-EM recently
\cite{shi2025fast, rao2020wasserstein},
not to mention scores of other applications
and theoretical results
\cite{rubner2000earth, rabin2011wasserstein, bonneel2015sliced,
jacobs2008approximate, levina2001earth, villani2003topics, villani2008optimal,
santambrogio2015optimal, peyre2019computational}.
Because the metrics $\D_{\alpha,p,r}$ generalize
a metric equivalent to Wasserstein, it
is natural to explore their properties as well.

Because the images in cryo-EM typically
undergo random in-plane rotations,
it is important to define a rotation-invariant version
of the metric $\D_{\alpha,p,r}$,
which minimizes the distance between the input
functions over all in-plane rotations,
as is done in \cite{shi2025fast} and  \cite{rao2020wasserstein}.
As will become clear later,
for this to be computationally feasible
we restrict $r=2$;
see Section \ref{sec:discretization_rotations} for details.

The main contributions of the paper are as follows:

\begin{itemize}

\item
We bound the distortion between $\D_{\alpha,p,r}(f,g)$
when $g$ is a perturbation of $f$, in terms
of the maximum displacement of the perturbation.
(In fact, the bound extends to the more general setting
where $f$ and $g$ are projections of functions
that are perturbations of each other.)

\item
If $f$ and $g$ are 2D projections of
a fixed 3D function,
we prove robustness of
the rotationally-invariant metric
$\D_{\alpha,p,r}(f ,g; \,\SO(2))$
to changes in the projection angle.

\item
When $r=2$, we define a fast algorithm for calculating
both $\D_{\alpha,p}(f,g)$ and the rotationally-invariant
metric $\D_{\alpha,p}(f,g;\,  \SO(2))$. For functions discretized
on an $n$-by-$n$ Cartesian grid, the cost of these algorithms
scale like $O(n^2 \log(n))$.

\item
We prove robustness of the discretization to additive sub-Gaussian noise.

\item
Finally, we report on numerical experiments illustrating the
aforementioned results.

\end{itemize}

\section{The scale-space metrics}
\label{sec:metrics_defined}

The metrics will be defined by several user-set parameters:
$\tau_0 > 0$, $0 < \omega < 1$, $r \ge 1$, $p \ge 1$, and $\alpha > 0$.
Typically, we will take $\omega = 1/2$;
and when rotational-invariance is needed, $r=2$.

For a value $\tau > 0$, denote by $G_\tau$ the Gaussian kernel at scale $\tau$,
defined by
\begin{align}
G_\tau(x) = \frac{1}{(\pi \tau)^{d/2}} e^{-|x|^2 / \tau}.
\end{align}
For fixed values $\tau_0 > 0$ and $0 < \omega < 1$, and for $k \ge 0$, define
\begin{align}
\tau_k = \tau_0 \omega^k.
\end{align}
Fix $\alpha > 0$,  $p \ge 1$, and $r \ge 1$. For a function $f$ in $L^r(\R^d)$,
we define the norm
\begin{align}
\|f\|_{\Gamma_{p,r}^\alpha}
= \left(\sum_{k \ge 0} \tau_k^{p \alpha / 2} \|G_{\tau_k} \ast f\|_{L^r}^p \right)^{1/p},
\end{align}
and denote the induced distance between functions $f$ and $g$ as follows:
\begin{align}
\D_{\alpha,p,r}(f,g) = \|f - g\|_{\Gamma_{p,r}^\alpha}.
\end{align}
When $r=2$, we will denote this more simply by $\D_{\alpha,p}(f,g)$;
and when $r=2$ and $p=1$, we will denote it by $\D_{\alpha}(f,g)$.

Given a subgroup $\calG \le O(d)$, we define the $\calG$-invariant
distance between $f$ and $g$ by
\begin{align}
\D_{\alpha,p,r}(f,g; \,\mathcal{G})
= \min_{U \in \mathcal{G}} \|f - g \circ U\|_{\Gamma_{p,r}^\alpha}.
\end{align}
$\D_{\alpha,p,r}(f,g; \,\mathcal{G})$ is really a distance between
the orbits of $f$ and $g$ under the action of $\calG$.
In this work, we will only consider the case where $\calG = \SO(2)$,
for comparing functions of two variables.

\subsection{Difference-of-Gaussians}

It is known that the norm $\|f\|_{\Gamma_{p,r}^\alpha}$
is in fact equivalent (in the sense that the ratios
are bounded above and below be constants independent of $f$)
to the norm
\begin{align}
\|f\|_{\Gamma_{p,r}^\alpha}^{(1)}
= \left( \tau_0^{p\alpha/2} \|G_{\tau_0} \ast f\|_{L^r}^p+ 
\sum_{k \ge 0} \tau_k^{p \alpha / 2} \|(G_{\tau_k}  - G_{\tau_{k+1}})\ast f\|_{L^r}^p \right)^{1/p}.
\end{align}
More precisely, we have the following result:

\begin{prop}
\label{prop:dog_equivalence}
There is a universal constant $C>0$
such that 
for all $f$ in $L^r$,
\begin{align}
\frac{1}{C} \cdot \|f\|_{\Gamma_{p,r}^\alpha}^{(1)} \le \|f\|_{\Gamma_{p,r}^\alpha}
\le \frac{C}{1 - \omega^{\alpha/2}} \cdot \|f\|_{\Gamma_{p,r}^{\alpha}}^{(1)} .
\end{align}
\end{prop}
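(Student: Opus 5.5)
The upper bound comes from telescoping the Gaussian stack into difference-of-Gaussians pieces and applying a discrete Young inequality. The lower bound comes from the triangle inequality. Write $a_k = \tau_k^{\alpha/2}\|G_{\tau_k}\ast f\|_{L^r}$ and $d_k = \tau_k^{\alpha/2}\|(G_{\tau_k}-G_{\tau_{k+1}})\ast f\|_{L^r}$. Then $\|f\|_{\Gamma^\alpha_{p,r}} = \|(a_k)_{k\ge0}\|_{\ell^p}$ and $\|f\|^{(1)}_{\Gamma^\alpha_{p,r}} = \|(a_0, d_0, d_1, \dots)\|_{\ell^p}$. Everything is therefore an inequality between weighted sequences in $\ell^p$, and Minkowski's inequality in $L^r$ is the only analytic input. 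All quantities are finite because $\|G_\tau\ast f\|_{L^r}\le \|f\|_{L^r}$ and $\tau_k^{\alpha/2}$ decays geometrically.

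\emph{Upper bound.} I would telescope, writing
\[
G_{\tau_k}\ast f = G_{\tau_0}\ast f - \sum_{j=0}^{k-1}(G_{\tau_j}-G_{\tau_{j+1}})\ast f .
\]
Taking $L^r$ norms and multiplying by $\tau_k^{\alpha/2} = \omega^{(k-j)\alpha/2}\tau_j^{\alpha/2}$ gives
\[
a_k \le \omega^{k\alpha/2} a_0 + \sum_{j=0}^{k-1}\omega^{(k-j)\alpha/2} d_j .
\]
Index a sequence $b$ by $b_{-1}=a_0$ and $b_j = d_j$ for $j\ge0$. Then $a_k\le \sum_{j\ge -1} K(k-j)\, b_j$ with kernel $K(m) = \omega^{m\alpha/2}$ for $m\ge1$, $K(m)=0$ for $m\le 0$, except that the $b_{-1}$ term carries $\omega^{k\alpha/2}$ rather than $K(k+1)$. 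That weight is at most $K(k+1)/\omega^{\alpha/2}$, so the cleaner choice is to shift so that $b_{-1}$ sits at lag $k$. Either way the kernel is dominated by $\omega^{m\alpha/2}$ summed over $m\ge0$, whose $\ell^1$ norm is $1/(1-\omega^{\alpha/2})$. Young's inequality $\|K\ast b\|_{\ell^p}\le\|K\|_{\ell^1}\|b\|_{\ell^p}$ then gives $\|f\|_{\Gamma^\alpha_{p,r}} \le \frac{C}{1-\omega^{\alpha/2}}\|f\|^{(1)}_{\Gamma^\alpha_{p,r}}$ with $C$ an absolute constant. A crude bound such as $C=2$ suffices to absorb the boundary-index bookkeeping, for example by treating the $a_0$ term separately and using $(x+y)^p\le 2^{p-1}(x^p+y^p)$.

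\emph{Lower bound.} Trivially $a_0^p\le \|f\|^p_{\Gamma^\alpha_{p,r}}$. For the difference terms, Minkowski gives
\[
d_k \le a_k + \tau_k^{\alpha/2}\|G_{\tau_{k+1}}\ast f\|_{L^r} = a_k + \omega^{-\alpha/2} a_{k+1}.
\]
Hence $\sum_k d_k^p \le 2^{p-1}(1+\omega^{-p\alpha/2})\sum_k a_k^p$, and $\|f\|^{(1)}\le C'\|f\|_{\Gamma}$.

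The main obstacle is that $C'$ depends on $\omega^{-\alpha/2}$. This is harmless for the intended choice $\omega=1/2$ with $\alpha$ bounded, but it is not literally universal. To remove the dependence, I would try to bound $\|G_{\tau_{k+1}}\ast f\|$ by neighbouring coarser scales, but convolution with a Gaussian cannot be inverted. I therefore expect the honest statement is that $C$ depends only on $\omega$ and $\alpha$ (and is an absolute constant once these are fixed, e.g.\ $\omega=1/2$, $\alpha\le 2$). Under that reading the proof closes with the two steps above.
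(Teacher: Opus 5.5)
Your upper bound is the paper's argument: telescope $G_{\tau_k}\ast f$ back to $G_{\tau_0}\ast f$, pull out the geometric weights, and apply the discrete Young inequality with kernel $\omega^{m\alpha/2}$. Your ``shifted'' indexing closes the argument without any splitting. Set $b_0=a_0$ and $b_i=d_{i-1}$ for $i\ge1$. The coefficient of $d_{i-1}$ is $\omega^{(k-i+1)\alpha/2}\le\omega^{(k-i)\alpha/2}$, so $a_k\le\sum_{i=0}^{k}\omega^{(k-i)\alpha/2}b_i$. Since $\|b\|_{\ell^p}=\|f\|^{(1)}_{\Gamma^\alpha_{p,r}}$ exactly, Young gives the upper bound with $C=1$. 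The paper instead separates the $G_{\tau_0}$ term with a $2^p$ splitting and Lemma~\ref{lem:dumb_bound-2}, which costs a constant factor.

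Your lower bound is also the paper's argument, and your bookkeeping is right where the paper's is not. The paper says a universal constant follows ``immediately'' from $\|(G_{\tau_{k+1}}-G_{\tau_k})\ast f\|_{L^r}^p\le 2^p(\|G_{\tau_{k+1}}\ast f\|_{L^r}^p+\|G_{\tau_k}\ast f\|_{L^r}^p)$. It overlooks that the first term carries the weight $\tau_k^{p\alpha/2}=\omega^{-p\alpha/2}\tau_{k+1}^{p\alpha/2}$. You are right that this factor cannot be removed, so the lower bound as stated, and the remark calling it universal, is false.

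Here is a counterexample. Take $r=2$, $\omega\le 1/2$, and $\what{f}$ of unit $L^2$ norm supported in the shell $\rho\le|\xi|\le(1+\eta)\rho$. Since $\what{G}_\tau(\xi)=e^{-\pi^2\tau|\xi|^2}$, writing $s=\pi^2\tau_1\rho^2$ gives $a_k\le\tau_0^{\alpha/2}\omega^{k\alpha/2}e^{-s\omega^{k-1}}$ for every $k$. Also $d_0\to\tau_0^{\alpha/2}(e^{-s}-e^{-s/\omega})$ as $\eta\to0$. Choose $s$ with $e^{-s}=\omega^{\alpha/2}$. Then $a_0,a_1\le\tau_0^{\alpha/2}\omega^{\alpha}$ and $a_k\le\tau_0^{\alpha/2}\omega^{k\alpha/2}$ for $k\ge2$. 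Hence $\|f\|_{\Gamma^\alpha_{p,r}}\le 4\tau_0^{\alpha/2}\omega^{\alpha}$ once $\omega^{\alpha/2}\le 1/2$. On the other hand, $\|f\|^{(1)}_{\Gamma^\alpha_{p,r}}\ge d_0\to\tau_0^{\alpha/2}(\omega^{\alpha/2}-\omega^{\alpha/(2\omega)})\ge\tau_0^{\alpha/2}(\omega^{\alpha/2}-\omega^{\alpha})$. In the limit the ratio is at least $(\omega^{-\alpha/2}-1)/4$, which is unbounded as $\omega^{\alpha/2}\to0$, e.g.\ for $\omega=1/2$ and $\alpha\to\infty$.

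Your argument gives $\|f\|^{(1)}_{\Gamma^\alpha_{p,r}}\le 3\,\omega^{-\alpha/2}\|f\|_{\Gamma^\alpha_{p,r}}$, so the order $\omega^{-\alpha/2}$ is sharp. The correct statement is
\[
\frac{\omega^{\alpha/2}}{C}\,\|f\|^{(1)}_{\Gamma^\alpha_{p,r}}\le\|f\|_{\Gamma^\alpha_{p,r}}\le\frac{C}{1-\omega^{\alpha/2}}\,\|f\|^{(1)}_{\Gamma^\alpha_{p,r}}
\]
with $C$ absolute. It reduces to the paper's form only when $\omega^{\alpha/2}$ is bounded below, for instance $\omega=1/2$ with $\alpha$ bounded, exactly as you anticipated.
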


\begin{rmk}
Results of this form are well known in classical Besov space theory;
indeed, there are many equivalent formulas for Besov norms
and their duals \cite{triebel1983theory, sawano2018theory}.
Though there is nothing essentially novel in the result, for the reader's convenience,
we give a self-contained, elementary proof.
\end{rmk}

\begin{rmk}
The norm $\|f\|_{\Gamma_{p,r}^\alpha}^{(1)}$ is of interest in that it convolves
with the difference-of-Gaussian filters $G_{\tau_k}  - G_{\tau_{k+1}}$;
that is, it applies a sequence of band-pass filters
to $f$, rather than low-pass filters.
The difference-of-Gaussians filters are another classical
tool of image processing, useful for edge detection
\cite{marr1980theory, wilson1977threshold}.
Related ideas underly the Laplacian pyramids
algorithm \cite{burt1983laplacian}, among others.
\end{rmk}

\begin{rmk}
Note that upper bound on
$\|f\|_{\Gamma_{p,r}^{\alpha}} / \|f\|_{\Gamma_{p,r}^{\alpha}}^{(1)}$
grows to infinity as $\alpha \to 0^+$,
whereas the lower bound is a universal constant.
This behavior is to be expected,
and understanding why may give some intuition
as to the way the norms are defined.
Reasoning informally,
each term $G_{\tau_k} \ast f$ appearing
in $\|f\|_{\Gamma_{p,r}^{\alpha}}$
serves as a low-pass filter,
which essentially preserves the frequencies $\xi$
in the range $0 \le |\xi| \le A / \sqrt{\tau_k}$
(for some $A>0$,
with leakage, that we will ignore, due to the non-compact support of $\what{G}_{\tau_k}$).
On the other hand, each term $(G_{\tau_k} - G_{\tau_{k+1}})\ast f$
appearing in $\|f\|_{\Gamma_{p,r}^{\alpha}}^{(1)}$
serves as a band-pass filter,
preserving the frequencies $\xi$ of $f$
in the range $A/\sqrt{\tau_k} < |\xi| < A/\sqrt{\tau_{k+1}}$
(again, ignoring leakage outside this range).
In other words, each summand of $\|f\|_{\Gamma_{p,r}^{\alpha}}^{(1)}$
effectively
measures $f$'s frequency content in the \emph{disjoint}
regions $A/\sqrt{\tau_k} < |\xi| < A/\sqrt{\tau_{k+1}}$,
whereas each summand of $\|f\|_{\Gamma_{p,r}^{\alpha}}$
measures the frequency content in the \emph{overlapping}
regions $0 \le |\xi| < A/ \sqrt{\tau_k}$;
the inequality $\|f\|_{\Gamma_{p,r}^{\alpha}}^{(1)} \le C \cdot \|f\|_{\Gamma_{p,r}^{\alpha}}$
is therefore not surprising, since the series
defining $\|f\|_{\Gamma_{p,r}^{\alpha}}$
adds up many redundant contributions from overlapping
regions. On the other hand,
for each $k$,
though the terms $\|G_{\tau_\ell} \ast f\|_{L^r}^p$,
$\ell \ge k$,
all capture frequencies from the region $A/\sqrt{\tau_k} < |\xi| < A/\sqrt{\tau_{k+1}}$,
the geometrically decreasing weights $\tau_\ell^{p \alpha /2}$
ensure that the weighted sum of
all these contributions is still proportional
to the first term $\|G_{\tau_k} \ast f\|_{L^r}^p$,
with a constant the grows as the weights $\tau_\ell^{p \alpha /2}$
decay slower, i.e. as $\alpha \to 0^+$.
\end{rmk}

It will first be convenient to state an elementary lemma,
which we prove for the reader's convenience.

\begin{lem}
\label{lem:dumb_bound-2}
Let $0 < \delta < 1$.
Then
\begin{align}
\left( \frac{1}{1 - \delta^{p}} \right)^{1/p} \le \frac{1}{1-\delta}
\end{align}
for all $p \ge 1$.

\end{lem}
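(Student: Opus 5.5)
The plan is to raise both sides to the $p$-th power, so that the claim becomes the equivalent inequality
\begin{align}
\frac{1}{1-\delta^p} \le \frac{1}{(1-\delta)^p},
\qquad\text{i.e.}\qquad
(1-\delta)^p \le 1-\delta^p .
\end{align}
This equivalence holds because both sides of the original statement are positive and $t \mapsto t^p$ is increasing on $(0,\infty)$.

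To prove $(1-\delta)^p + \delta^p \le 1$, I will use that $a = 1-\delta$ and $b = \delta$ both lie in $(0,1)$ and satisfy $a+b=1$. Since $p \ge 1$ and $0<a<1$, we have $a^p \le a$, and likewise $b^p \le b$. Adding these gives $a^p + b^p \le a + b = 1$, which is exactly the required inequality. Taking $p$-th roots then returns the statement of the lemma.

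An alternative route is to view $h(p) = (1-\delta)^p + \delta^p$ as a function of $p$. It is decreasing, since each term is an exponential with base in $(0,1)$, and $h(1)=1$. I would use the first argument because it is shorter.

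No step is a genuine obstacle. The only point needing care is that the reduction to $p$-th powers is reversible, which holds because every quantity involved is positive.
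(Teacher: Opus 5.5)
Your proof is correct and essentially the same as the paper's. The paper also reduces the claim to $1-\delta^p-(1-\delta)^p \ge 0$ and argues from the equality case $p=1$ using that both $\delta^p$ and $(1-\delta)^p$ decrease in $p$, which is your alternative route; your termwise bounds $a^p \le a$ and $b^p \le b$ are that same monotonicity applied to each summand.
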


\begin{proof}[Proof of Lemma \ref{lem:dumb_bound-2}]

The inequality is equivalent to
\begin{align}
1 - \delta^p - (1-\delta)^p \ge 0.
\end{align}
Equality holds when $p=1$, and since $0 < \delta < 1$, the right side is increasing
as a function of $p$ (since both $\delta^p$ and $(1-\delta)^p$
decrease as $p$ grows), proving the result.

\end{proof}

\begin{proof}[Proof of Proposition \ref{prop:dog_equivalence}]

First,
using the inequality
\begin{align}
\|(G_{\tau_{k+1}}  - G_{\tau_k})\ast f\|_{L^r}^{p}
\le 2^p \left(\|G_{\tau_{k+1}} \ast f \|_{L^r}^{p} + \| G_{\tau_k}\ast f\|_{L^r}^{p} \right)
\end{align}
it immediately follows that
\begin{align}
\| f \|_{\Gamma_{p,r}^\alpha}^{(1)} \le C \cdot \| f \|_{\Gamma_{p,r}^{\alpha}},
\end{align}
for a universal constant $C>0$.

For the reverse direction, from the telescoping sum
\begin{align}
G_{\tau_k} \ast f
= G_{\tau_0} \ast f + \sum_{\ell=1}^{k} (G_{\tau_\ell} - G_{\tau_{\ell-1}}) \ast f
\end{align}
it follows that
\begin{align}
\|G_{\tau_k} \ast f\|_{L^r}
\le \|G_{\tau_0} \ast f\|_{L^r}
    + \sum_{\ell=1}^{k} \|(G_{\tau_\ell} - G_{\tau_{\ell-1}}) \ast f\|_{L^r}.
\end{align}
Consequently,
\begin{align}
\label{eq:604002}
\tau_k^{\alpha/2} \|G_{\tau_k} \ast f\|_{L^r}
&\le \tau_k^{\alpha/2} \|G_{\tau_0} \ast f \|_{L^r}
    + \sum_{\ell=1}^{k} \tau_k^{\alpha/2} \tau_{\ell}^{-\alpha/2} \tau_{\ell}^{\alpha/2}
        \|(G_{\tau_\ell} - G_{\tau_{\ell-1}}) \ast f\|_{L^r}
\nonumber \\
&= \tau_k^{\alpha/2} \|G_{\tau_0} \ast f \|_{L^r}
    + \sum_{\ell=1}^{k} \omega^{(k-\ell)\alpha/2} \cdot \tau_{\ell}^{\alpha/2}
        \|(G_{\tau_\ell} - G_{\tau_{\ell-1}}) \ast f\|_{L^r}.
\end{align}
Define $A[m] = \omega^{m \alpha/2}$ for $m \ge 0$, and $A[m]=0$ for $m < 0$;
and define $B[\ell] = \tau_\ell^{\alpha/2} \|(G_{\tau_\ell} - G_{\tau_{\ell-1}}) \ast f\|_{L^r}$
for $\ell>0$, and $B[\ell]=0$ for $\ell \le 0$.
Then the series on the right side of \eqref{eq:604002}
may be written as a convolution between $A$ and $B$ over $\ZZ$:
\begin{align}
\sum_{\ell=1}^{k} \omega^{(k-\ell)\alpha/2} \cdot \tau_{\ell}^{\alpha/2}
    \|(G_{\tau_\ell} - G_{\tau_{\ell-1}}) \ast f\|_{L^r}
= \sum_{\ell=-\infty}^{\infty} A[k-\ell] B[\ell]
= (A \ast B)[k].
\end{align}
Note that $(A \ast B)[k] = 0$ if $k \le 0$,
since in that case all the terms $A[k-\ell] = 0$ for $\ell > 0$,
and $B[\ell] = 0$ for $\ell \le 0$.
From
Young's inequality (see, for instance, Theorem 1.2.12
in \cite{grakafos2010classical}),
summing over $k$ in \eqref{eq:604002} gives the bound
\begin{align}
\left(\sum_{k=0}^\infty
        \left(\sum_{\ell=1}^{k} \tau_k^{\alpha/2} \tau_{\ell}^{-\alpha/2} \tau_{\ell}^{\alpha/2}
        \|(G_{\tau_\ell} - G_{\tau_{\ell-1}}) \ast f\|_{L^r} \right)^p \right)^{1/p}
&\le \left(\sum_{k=0}^\infty |(A \ast B) [k]|^p \right)^{1/p}
\nonumber \\
&\le \left(\sum_{k=-\infty}^{\infty} A[k] \right)
    \cdot \left( \sum_{\ell=-\infty}^{\infty} B[\ell]^{p} \right)^{1/p}
\nonumber \\
&= \left(\sum_{k=0}^{\infty} \omega^{k\alpha/2} \right)
    \cdot \left( \sum_{\ell=1}^{\infty} \tau_\ell^{p\alpha/2}
        \|(G_{\tau_\ell} - G_{\tau_{\ell-1}}) \ast f\|_{L^r}^p \right)^{1/p}
\nonumber \\
&= \left(\frac{1}{1 - \omega^{\alpha/2}} \right)
    \cdot \left( \sum_{\ell=1}^{\infty} \tau_\ell^{p\alpha/2}
        \|(G_{\tau_\ell} - G_{\tau_{\ell-1}}) \ast f\|_{L^r}^p \right)^{1/p}
\end{align}

Consequently, since
\begin{align}
\|G_{\tau_k} \ast f\|_{L^r}
\le \|G_{\tau_0} \ast f\|_{L^r}
    + \sum_{\ell=1}^{k} \|(G_{\tau_\ell} - G_{\tau_{\ell-1}}) \ast f\|_{L^r},
\end{align}
and hence
\begin{align}
\|G_{\tau_k} \ast f\|_{L^r}^p
\le 2^p \|G_{\tau_0} \ast f\|_{L^r}^p
    + 2^p \left(\sum_{\ell=1}^{k} \|(G_{\tau_\ell} - G_{\tau_{\ell-1}}) \ast f\|_{L^r}\right)^p,
\end{align}
it follows that
\begin{align}
\sum_{k=0}^{\infty} \tau_k^{p \alpha / 2} \|G_{\tau_k} \ast f\|_{L^r}^p
&\le 2^p \sum_{k=0}^{\infty} \tau_k^{p\alpha/2}\|G_{\tau_0} \ast f\|_{L^r}^p
    + 2^p\sum_{k=0}^\infty
        \left(\sum_{\ell=1}^{k} \tau_k^{\alpha/2} \tau_{\ell}^{-\alpha/2} \tau_{\ell}^{\alpha/2}
        \|(G_{\tau_\ell} - G_{\tau_{\ell-1}}) \ast f\|_{L^r} \right)^p
\nonumber \\
&\le 2^p \frac{\tau_0^{p\alpha/2}}{1 - \omega^{p\alpha/2}}\|G_{\tau_0} \ast f\|_{L^r}^p
    + 2^p \left(\frac{1}{1 - \omega^{\alpha/2}} \right)^p
    \cdot \left( \sum_{\ell=1}^{\infty} \tau_\ell^{p\alpha/2}
        \|(G_{\tau_\ell} - G_{\tau_{\ell-1}}) \ast f\|_{L^r}^p \right),
\end{align}
and so
\begin{align}
\|f\|_{\Gamma_{p,r}^\alpha}
&= \left( \sum_{k=0}^{\infty} \tau_k^{p \alpha / 2} \|G_{\tau_k} \ast f\|_{L^r}^p \right)^{1/p}
\nonumber \\
&\le  2\left(\frac{1}{1 - \omega^{p\alpha/2}} \right)^{1/p}
        \tau_0^{\alpha/2} \|G_{\tau_0} \ast f\|_{L^r}
    + \left(\frac{2}{1 - \omega^{\alpha/2}} \right)
    \cdot \left( \sum_{\ell=1}^{\infty} \tau_\ell^{p\alpha/2}
        \|(G_{\tau_\ell} - G_{\tau_{\ell-1}}) \ast f\|_{L^r}^p \right)^{1/p}
\nonumber \\
&\le  \left(\frac{2}{1 - \omega^{\alpha/2}} \right)
        \tau_0^{\alpha/2} \|G_{\tau_0} \ast f\|_{L^r}
    + \left(\frac{2}{1 - \omega^{\alpha/2}}  \right)
    \cdot \left( \sum_{\ell=1}^{\infty} \tau_\ell^{p\alpha/2}
        \|(G_{\tau_\ell} - G_{\tau_{\ell-1}}) \ast f\|_{L^r}^p \right)^{1/p}
\nonumber \\
&\le \frac{C}{1 - \omega^{\alpha/2}} \cdot \|f\|_{\Gamma_{p,r}^{\alpha}}^{(1)},
\end{align}
as claimed.

\end{proof}

\subsection{Choice of $\omega$}

The norms $\|f\|_{\Gamma_{p,r}^{\alpha}}$,
and corresponding distances $\D_{\alpha,p,r}(f,g)$,
depend on a choice of rate parameter $0 < \omega < 1$
(which we suppress from the notation).
It is therefore natural to ask about the relationships
between these norms/distances for different choices
of $\omega$.
It is easy to see
that all choices of $0 < \omega < 1$ yield equivalent norms,
in the sense that their ratios are bounded above and below
by values independent of $f$;
such a proof can be extracted from any
standard reference on Besov spaces \cite{sawano2018theory, triebel1983theory}.
Indeed, the argument follows easily by
showing that for any choice of $\omega$, the norm $\|f\|_{\Gamma_{p,r}^\alpha}$
is equivalent to the continuous-scale norm
\begin{align}
\left(\int_{0}^{\tau_0} t^{p\alpha/2- 1} \|G_t \ast f\|_{L^r}^p \, dt \right)^{1/p}.
\end{align}
This may be shown by breaking this integral
into integrals over the subintervals $[\tau_{k+1},\tau_{k}]$
and obtaining a suitable upper or lower bound
on each subinterval
using monotonicity of the functions $t^{p \alpha/2-1}$
and $\|G_t \ast f\|_{L^r}^p$
(viewed as functions of $t$).
We omit the straightforward details.

\section{Key definitions}

\subsection{Deformations and maximum displacement}

Suppose $A$ and $B$ are open subsets of $\R^d$,
and $\varphi : B \to A$ is a $C^1$ bijection.
If $f : A \to \R$ is in $L^1$, then $\varphi$ induces
a new function $f_\varphi : B \to \R$ which we define as
\begin{align}
f_\varphi(x) = f(\varphi(x)) |\nabla \varphi (x)|,
\end{align}
where $\nabla \varphi(x)$ is the Jacobian matrix of $\varphi$,
and $|\nabla \varphi(x)|$ denotes the absolute value of its determinant.
We will call $f_\varphi$  a deformation of $f$
(and sometimes by abuse of terminology we may also
refer to $\varphi$ itself as a deformation).
Note that $f$ and $f_\varphi$ have the same integrals
and $L^1$ norms. Note too that if $\psi = \varphi^{-1}$
is the functional inverse of $\phi$, then $f = (f_\varphi)_{\psi}$.

We define the maximum displacement of $\varphi$ by
\begin{align}
\varepsilon(\varphi) = \max_{x \in B} |x - \varphi(x)|.
\end{align}
Note too that $\varepsilon(\varphi) = \varepsilon(\varphi^{-1})$.
This definition of maximum displacement is used in,
for instance, the paper \cite{leeb2025sliced},
and will be how we measure the ``size'' of the deformation $\varphi$.

\begin{rmk}
The deformation $f_\varphi$ preserves the $L^1$
norm and integral of $f$. This is natural in our setting,
due to the relationship between the metrics $\D_{\alpha,p,r}$
and Wasserstein distances (which are defined between probability
measures, which obviously have equal measure).
We do note, however, that other works have considered
deformations of the form $f(\varphi(x))$, which instead preserve the
$L^\infty$ norm of $f$; see \cite{mallat2012group, anden2014deep}.

\end{rmk}

\subsection{Mixed norms and projections}

For a subspace $\calU \le \R^D$ of dimension $d$,
we define the projection $\calP_\calU$ onto $\calU$ by
\begin{align}
(\calP_\calU f)(x) = \int_{\calU^{\perp}} f(x,y) \,dy.
\end{align}
Projections of this type are fundamental in tomography
\cite{herman2009fundamentals, bendory2020single, singer2020computational, van2000single, doerr2016single, vulovic2013image}.

For a subspace $\calU$ of $\RR^D$
of dimension $d < D$,
we define the mixed norm
\begin{align}
\|f\|_{M^r(\calU)}
= \left(\int_{\calU} \left(\int_{\calU^\perp} |f(x,y)| dy \right)^r dx \right)^{1/r}.
\end{align}
Note that
\begin{align}
\|f\|_{M^r(\calU)} = \|\calP_\calU(|f|)\|_{L^r(\calU)}.
\end{align}

We also define the max-mixed norm
\begin{align}
\|f\|_{M_d^r}
= \sup_{\dim(\calU) = d} \|f\|_{M^r(\calU)}.
\end{align}
When $d$ is clear from the context, we will just denote this
by $\|f\|_{M^r}$.

\begin{rmk}
Note that $\|f\|_{M^{r}}$ is rotationally-invariant,
and that if $f$ is compactly-supported, $\|f\|_{M^{r}}$ is controlled
by the $L^r$ norm of $f$. Indeed, if $f$ is continuous and supported on, say,
the ball $B_D(0,R)$ of radius $R$ in $\R^D$, then
each $f(x,\cdot)$ is supported on the ball $B_{D-d}(0,R)$ of radius $R$
in $\calU^{\perp}$ (identified with $\R^{D-d}$),
and denoting its volume by $V_{D-d}(R)$,
\begin{align}
\calP_\calU(|f|) (x) = \int_{\calU^{\perp}} |f(x,y)| \, dy
\le V_{D-d}(R)^{1 - 1/r} \left(\int_{\calU^{\perp}} |f(x,y)|^r \, dy\right)^{1/r}.
\end{align}
It then follows that
\begin{align}
\|f\|_{M^r(\calU)}
= \|\calP_\calU (|f|)\|_{L^r(\calU)}
\le V_{D-d}(R)^{1 - 1/r} \left(\int_{\calU} \int_{\calU^{\perp}} |f(x,y)|^r \, dy \, dx\right)^{1/r}
= V_{D-d}(R)^{1 - 1/r} \|f\|_{L^r},
\end{align}
and taking the supremum over $\calU$ shows $\|f\|_{M^r} \le V_{D-d}(R)^{1 - 1/r} \|f\|_{L^r}$
as well.

\end{rmk}

\section{Properties of $\D_{\alpha,p,r}$}

\subsection{Stability under deformations}

In this section, we establish bounds on the distances
$\D_{\alpha,p,r}(f,f_\varphi)$
between a function $f$ in $L^r$ and its deformation
$f_\varphi(x) = f(\varphi) |\nabla \varphi(x)|$.

The simplest bound is the following:
\begin{prop}
\label{prop:trivial}
For any parameters $\alpha,p,r,\omega,\tau_0$,
and any functions $f$ and $g$ in $L^r$,
\begin{align}
\D_{\alpha,p,r}(f,g)
\le \frac{\tau_0^{\alpha / 2}}{1 - \omega^{\alpha/2}} \cdot \|f-g\|_{L^r}.
\end{align}

\end{prop}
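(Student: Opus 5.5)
The plan is to bound each term of the series defining $\|f-g\|_{\Gamma_{p,r}^\alpha}$ separately, using Young's convolution inequality, and then sum a geometric series. Set $h = f - g \in L^r$. Since each $G_{\tau_k}$ is nonnegative with $\int G_{\tau_k} = 1$, Young's inequality (or Minkowski's integral inequality) gives
\begin{align}
\|G_{\tau_k} \ast h\|_{L^r} \le \|G_{\tau_k}\|_{L^1} \|h\|_{L^r} = \|h\|_{L^r}
\end{align}
for every $k \ge 0$. Substituting this into the definition, and using $\tau_k = \tau_0 \omega^k$, we get
\begin{align}
\D_{\alpha,p,r}(f,g) \le \left(\sum_{k \ge 0} \tau_0^{p\alpha/2} \omega^{kp\alpha/2}\right)^{1/p} \|h\|_{L^r}
= \tau_0^{\alpha/2} \left(\frac{1}{1 - \omega^{p\alpha/2}}\right)^{1/p} \|h\|_{L^r}.
\end{align}

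Lemma \ref{lem:dumb_bound-2}, applied with $\delta = \omega^{\alpha/2} \in (0,1)$, then gives $(1 - \omega^{p\alpha/2})^{-1/p} \le (1-\omega^{\alpha/2})^{-1}$, which is exactly the claimed bound. (One could instead bound the $\ell^p$ norm of the sequence $\tau_k^{\alpha/2}$ by its $\ell^1$ norm, which yields the same constant directly.)

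None of the steps is hard. The only point needing care is that the Gaussian $G_\tau$ has unit mass for the normalization $(\pi\tau)^{-d/2} e^{-|x|^2/\tau}$; this follows from $\int_{\R^d} e^{-|x|^2/\tau}\,dx = (\pi\tau)^{d/2}$.
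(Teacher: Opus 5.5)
Your proposal is correct and follows the paper's proof essentially verbatim: Young's inequality with $\|G_{\tau_k}\|_{L^1}=1$ bounds each term by $\|f-g\|_{L^r}$, the geometric series gives $\tau_0^{p\alpha/2}/(1-\omega^{p\alpha/2})$, and Lemma \ref{lem:dumb_bound-2} with $\delta=\omega^{\alpha/2}$ finishes. Your parenthetical alternative (bounding the $\ell^p$ norm of $(\tau_k^{\alpha/2})_k$ by its $\ell^1$ norm) is also valid and yields the same constant.
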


\begin{proof}
Using Young's inequality for convolutions (e.g.\ Theorem 8.7
in \cite{folland1999real})
and $\|G_{\tau}\|_{L^1} = 1$,
\begin{align}
\D_{\alpha,p,r}(f,g)^p
= \sum_{k \ge 0} \tau_k^{p \alpha / 2} \|G_{\tau_k} \ast (f-g)\|_{L^r}^p
\le \|f-g\|_{L^r}^p \sum_{k \ge 0} \tau_k^{p \alpha / 2}
= \frac{\tau_0^{p \alpha / 2}}{1 - \omega^{p\alpha/2}} \|f-g\|_{L^r}^p,
\end{align}
and the result follows by taking $p$-th roots and using Lemma \ref{lem:dumb_bound-2}.

\end{proof}

Consequently, the metric $\D_{\alpha,p,r}(f,g)$ is no less stable than $\|f-g\|_{L^r}$.
However, we now state and prove a bound
that controls the distance $\D_{\alpha,p,r}(f,f_\varphi)$
by the deformation size $\varepsilon(\varphi)$.
Similar bounds are known (and are essentially tautological)
for Wasserstein distances, and are also known for other distances
such as the Cram\'er and sliced Cram\'er metrics \cite{leeb2025sliced}.

For $\alpha > 0$ and $p \ge 1$,
define the function $\Delta_{\alpha,p}(\epsilon)$, $0 < \epsilon < 1$, as follows:
\begin{align}
\Delta_{\alpha,p}(\epsilon)=
\begin{cases}
\begin{aligned}
\frac{\epsilon^\alpha}{(\alpha - \alpha^2)^{1/p}},
\end{aligned}
    & \text{ if } 0 < \alpha < 1, \\ \\
\begin{aligned}
\epsilon \cdot \left(\log\left(1/\epsilon\right) + 1 \right)^{1/p},
\end{aligned}
    & \text{ if } \alpha = 1, \\ \\
\begin{aligned}
\frac{\epsilon}{\min\{(\alpha - 1)^{1/p} , 1\}},
\end{aligned}
    & \text{ if } \alpha > 1.
\end{cases}
\end{align}

\begin{thm}
\label{thm:main_projections}
Let $A, B \subset \R^D$ be open sets,
and let $\varphi: B \to A$ be a $C^1$ bijection.
Let $d \le D$ and $\calU$ be a $d$-dimensional subspace of $\RR^D$.
Suppose $f$ is in $L^1 \cap L^r$ and supported on $A$, with
$\|f_\varphi\|_{M^r(\calU)} \le \|f\|_{M^r(\calU)} < \infty$,
where $r \ge 1$.
Let $\varepsilon = \varepsilon(\varphi)= \max_{x \in B}|x - \varphi(x)|$,
and suppose $\varepsilon < \sqrt{\tau_0}$.
Then for all $\alpha > 0$ and $p \ge 1$,
\begin{align}
\D_{\alpha,p,r}(\calP_\calU f,\calP_\calU f_\varphi)
\le C \cdot \|f\|_{M^r(\calU)} \cdot \frac{\tau_0^{\alpha/2}}{1 - \omega^{1/2}}
    \cdot \Delta_{\alpha,p}(\varepsilon / \sqrt{\tau_0})
\end{align}
where $C>0$ depends only on the dimension $d$.

\end{thm}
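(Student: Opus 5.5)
The plan is to bound each scale separately and then sum. For each $k$ I will bound $\|G_{\tau_k}\ast(\calP_\calU f - \calP_\calU f_\varphi)\|_{L^r}$ in two ways, a trivial bound and a displacement bound, and then split the sum over $k$ at the scale where $\sqrt{\tau_k}\approx\varepsilon$.

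The trivial bound comes from Young's inequality and $\|G_\tau\|_{L^1}=1$:
\[
\|G_{\tau_k}\ast(\calP_\calU f-\calP_\calU f_\varphi)\|_{L^r}\le \|\calP_\calU f\|_{L^r}+\|\calP_\calU f_\varphi\|_{L^r}\le 2\|f\|_{M^r(\calU)},
\]
using $|\calP_\calU f|\le \calP_\calU|f|$ and the hypothesis $\|f_\varphi\|_{M^r(\calU)}\le\|f\|_{M^r(\calU)}$.

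For the displacement bound, change variables $z=\varphi(w)$ in $\int f_\varphi(w)\,h(w)\,dw$. Writing points of $\R^D$ as $(x,y)$ with $x\in\calU$, this gives
\[
G_\tau\ast\calP_\calU f_\varphi(u)=\int_{B} f(\varphi(w))\,|\nabla\varphi(w)|\,G_\tau(u-\pi_\calU w)\,dw=\int_A f(z)\,G_\tau\bigl(u-\pi_\calU\varphi^{-1}(z)\bigr)\,dz .
\]
Hence the difference at $u$ equals $\int_A f(z)\,[G_\tau(u-\pi_\calU z)-G_\tau(u-\pi_\calU\varphi^{-1}(z))]\,dz$. The two arguments differ by at most $\varepsilon$, by the definition of $\varepsilon(\varphi)$ and because $\pi_\calU$ is a contraction. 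By the mean value theorem, the bracket is bounded by $\varepsilon\,\sup_{|s|\le\varepsilon}|\nabla G_\tau(u-\pi_\calU z+s)|$. When $\varepsilon\le\sqrt\tau$, this supremum is dominated by $C\tau^{-1/2}\,\tilde G_{\tau}(u-\pi_\calU z)$, where $\tilde G_\tau$ is an $L^1$-normalized radial majorant at scale $\tau$, say $C\tau^{-d/2}(1+|x|/\sqrt\tau)^{-d-1}$, with $C$ depending only on $d$.

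This majorization is the main technical step. I would get it from $|\nabla G_\tau(x)|\le C\tau^{-1/2}\tau^{-d/2}e^{-|x|^2/(2\tau)}$ together with the fact that $e^{-|x+s|^2/(2\tau)}$ is bounded by a polynomially decaying profile in $|x|/\sqrt\tau$ whenever $|s|\le\sqrt\tau$. Integrating out $y$ then gives
\[
|G_\tau\ast(\calP_\calU f-\calP_\calU f_\varphi)|\le C\,\frac{\varepsilon}{\sqrt\tau}\;\tilde G_\tau\ast\calP_\calU|f| .
\]
Applying Young's inequality once more yields the displacement bound $C(\varepsilon/\sqrt{\tau_k})\|f\|_{M^r(\calU)}$.

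Combining, the $k$-th term is at most $C\|f\|_{M^r(\calU)}\min\{1,\varepsilon/\sqrt{\tau_k}\}$, and the hypothesis $\varepsilon<\sqrt{\tau_0}$ makes the displacement bound available at small $k$. Let $K$ be the largest index with $\sqrt{\tau_K}\ge\varepsilon$, and write $\eta=\varepsilon/\sqrt{\tau_0}$. The sum then splits into two geometric pieces.

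For $k\le K$ the terms are $C^p\|f\|^p\,\eta^p\,\tau_0^{p\alpha/2}\,\omega^{kp(\alpha-1)/2}$. Their sum behaves as follows:
\begin{itemize}
\item if $\alpha>1$, it is bounded by $\eta^p\tau_0^{p\alpha/2}/(1-\omega^{p(\alpha-1)/2})$;
\item if $\alpha=1$, it is $\eta^p\tau_0^{p/2}(K+1)$, and $K+1\lesssim \log(1/\eta)/\log(1/\omega)+1$;
\item if $\alpha<1$, it is dominated by the last term, which is of size $\tau_0^{p\alpha/2}\eta^{p\alpha}$.
\end{itemize}
For $k>K$ the terms are $C^p\|f\|^p\tau_k^{p\alpha/2}$, a geometric tail of size $\tau_0^{p\alpha/2}\eta^{p\alpha}/(1-\omega^{p\alpha/2})$.

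After taking $p$-th roots, Lemma \ref{lem:dumb_bound-2} will convert each factor $(1-\omega^{p\beta/2})^{-1/p}$ into a factor $(1-\omega^{\beta/2})^{-1}$. To reach the stated constants $(\alpha-\alpha^2)^{1/p}$ and $(1-\omega^{1/2})^{-1}$, I would use elementary estimates such as $1-\omega^{\beta/2}\ge\min(\beta,1)(1-\omega^{1/2})$, obtained from concavity of $\beta\mapsto 1-\omega^{\beta/2}$. I expect the bookkeeping of these $\alpha$- and $\omega$-dependent constants, especially near $\alpha=1$, to need care, though it should be routine.
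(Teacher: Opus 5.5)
Your overall structure matches the paper's: the trivial bound $2\|f\|_{M^r(\calU)}$, the split at $K$ (the paper's $M=\lfloor\log_{1/\omega}(\tau_0/\varepsilon^2)\rfloor$), and the same geometric sums. The per-scale displacement bound (Proposition~\ref{prop:bound_layer}) is where you take a genuinely different route.

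The paper bounds $\int|G(y)-G(y+\xi(y))|\,dy$ by a Taylor series (Lemma~\ref{lem:bound_xi}). It then applies Schur's test to the non-convolution kernel $K_\tau(x,u)=\max_{w\in B(u,\varepsilon)}|G_\tau(x-u)-G_\tau(x-w)|$, which requires checking both $\int K_\tau\,dx$ and $\int K_\tau\,du$. You instead use the mean value theorem together with a translation-invariant majorant of $\sup_{|s|\le\sqrt\tau}|\nabla G_\tau(x+s)|$. This majorant does exist; one can even take it Gaussian, since $|x+s|\ge|x|/2$ once $|x|\ge2\sqrt\tau$. The difference is then dominated pointwise by $C(\varepsilon/\sqrt\tau)\,\tilde G_\tau\ast\calP_\calU|f|$, and a single application of Young's inequality finishes. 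Your route is shorter, avoids Schur's test and the $O(\delta^2)$ remainder, and works for any kernel with an integrable local-gradient majorant. The paper's route gives the explicit leading constant $2\Gamma((d+1)/2)/\Gamma(d/2)$.

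The step that fails as written is the final constant bookkeeping. Your plan is to take $p$-th roots first, use Lemma~\ref{lem:dumb_bound-2} to replace $(1-\omega^{p\beta/2})^{-1/p}$ by $(1-\omega^{\beta/2})^{-1}$, and only then apply $1-\omega^{\beta/2}\ge\min(\beta,1)(1-\omega^{1/2})$. That order produces a factor $\beta^{-1}$ instead of $\beta^{-1/p}$. For $0<\alpha<1$ you would get $1/(\alpha-\alpha^2)$ rather than the stated $1/(\alpha-\alpha^2)^{1/p}$. For $1<\alpha<2$ you would get $1/(\alpha-1)$ rather than $1/(\alpha-1)^{1/p}$. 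When $p>1$ the ratio $(\alpha-\alpha^2)^{-(1-1/p)}$ is unbounded as $\alpha\to0^+$ or $\alpha\to1^-$, so it cannot be absorbed into a constant depending only on $d$.

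The fix, which is what the paper does, is to reverse the order and apply the concavity estimate to the $p$-th powers: $1-\omega^{p\beta/2}=1-(\omega^{p/2})^{\beta}\ge\min(\beta,1)\,(1-\omega^{p/2})$, which is Lemma~\ref{lem:dumb_bound} with $\eta=\omega^{p/2}$. In the case $0<\alpha<1$, add the head and tail pieces before taking the root, to get $\frac{1}{1-\omega^{p/2}}\cdot\frac{1}{\alpha(1-\alpha)}$. Only then take $p$-th roots and use $(1-\omega^{p/2})^{-1/p}\le(1-\omega^{1/2})^{-1}$ from Lemma~\ref{lem:dumb_bound-2}.

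Your $\alpha=1$ case goes through as you describe, using $\log(1/\omega)\ge1-\omega\ge1-\omega^{1/2}$ and $1-\omega^{1/2}<1$.
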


As an immediate corollary without projections, we have:
\begin{cor}
Let $A, B \subset \R^d$ be open sets,
and let $\varphi: B \to A$ be a $C^1$ bijection.
Suppose $f$ is in $L^r$ and supported on $A$, and satisfies
$\|f_\varphi\|_{L^r} \le \|f\|_{L^r} < \infty$,
where $r \ge 1$.
Let $\varepsilon = \varepsilon(\varphi)= \max_{x \in B}|x - \varphi(x)|$,
and suppose $\varepsilon < \sqrt{\tau_0}$.
Then for all $\alpha > 0$ and $p \ge 1$,
\begin{align}
\D_{\alpha,p,r}(f, f_\varphi)
\le C \cdot \|f\|_{L^r} \cdot \frac{\tau_0^{\alpha/2}}{1 - \omega^{1/2}}
    \cdot \Delta_{\alpha,p}(\varepsilon / \sqrt{\tau_0})
\end{align}
where $C>0$ depends only on the dimension $d$.

\end{cor}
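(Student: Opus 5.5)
The plan is to obtain this as the special case $D = d$, $\calU = \R^d$ of Theorem \ref{thm:main_projections}. When $\calU$ is all of $\R^d$, the orthogonal complement $\calU^\perp$ is $\{0\}$. Integration over $\calU^\perp$ is then evaluation at the single point $0$ (counting measure on a point), so $\calP_\calU f = f$ and $\calP_\calU f_\varphi = f_\varphi$. In the same way,
\begin{align}
\|f\|_{M^r(\calU)} = \left(\int_{\R^d} |f(x)|^r \, dx\right)^{1/r} = \|f\|_{L^r},
\end{align}
and likewise $\|f_\varphi\|_{M^r(\calU)} = \|f_\varphi\|_{L^r}$. The hypothesis $\|f_\varphi\|_{L^r} \le \|f\|_{L^r}$ is therefore exactly the hypothesis of the theorem. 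Its conclusion becomes the stated bound, with the same constant $C$ depending only on $d$. The first step is to check that the theorem's statement allows $d = D$, which it does since it says $d \le D$, and that the definitions of $\calP_\calU$ and $M^r(\calU)$ degenerate as described.

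The one discrepancy is that Theorem \ref{thm:main_projections} assumes $f \in L^1 \cap L^r$, while the corollary only assumes $f \in L^r$. I would first inspect the theorem's proof. I expect the $L^1$ assumption is only there to make the projections $\calP_\calU f$ well defined, and that is vacuous when $\calU^\perp = \{0\}$. If so, the estimates go through verbatim for $f \in L^r$.

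If the $L^1$ assumption is genuinely used, I would argue by truncation. Set $f_R = f \cdot 1_{\{|x|<R\}}$. By H\"older's inequality on the ball, $f_R \in L^1 \cap L^r$, and $f_R$ is supported in $A$. Its deformation is $(f_R)_\varphi = f_\varphi \cdot 1_{\{|\varphi(x)|<R\}}$. This gives
\begin{align}
\|(f_R)_\varphi\|_{L^r} \le \|f_\varphi\|_{L^r} \le \|f\|_{L^r}.
\end{align}
This is not quite the theorem's hypothesis, which compares against $\|f_R\|_{L^r}$ rather than $\|f\|_{L^r}$. I would handle this by noting that the theorem's proof uses the hypothesis only to bound both $\|f\|$ and $\|f_\varphi\|$ by a common quantity, here $\|f\|_{L^r}$. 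The theorem then gives the desired bound for $\D_{\alpha,p,r}(f_R,(f_R)_\varphi)$ with $\|f\|_{L^r}$ on the right-hand side.

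To pass to the limit $R \to \infty$, note that $f_R \to f$ and $(f_R)_\varphi \to f_\varphi$ in $L^r$ by dominated convergence. By Proposition \ref{prop:trivial} and the triangle inequality for the norm $\|\cdot\|_{\Gamma^\alpha_{p,r}}$, the distance $\D_{\alpha,p,r}$ is continuous with respect to $L^r$ convergence. Hence $\D_{\alpha,p,r}(f_R,(f_R)_\varphi) \to \D_{\alpha,p,r}(f,f_\varphi)$, and the bound holds for $f$.

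The only real obstacle is this bookkeeping around the $L^1$ hypothesis; the rest of the argument is a direct specialization.
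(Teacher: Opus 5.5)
Your proposal is correct and follows the paper's route: the paper states this as an immediate corollary of Theorem \ref{thm:main_projections} with $D=d$ and $\calU=\R^d$, so that $\calP_\calU$ is the identity and $\|\cdot\|_{M^r(\calU)}=\|\cdot\|_{L^r}$. The paper does not comment on the dropped $L^1$ hypothesis, and your first resolution is the right one: with $\calU^\perp=\{0\}$, the proofs of Proposition \ref{prop:bound_layer} and the theorem use only $f, f_\varphi\in L^r$ (via change of variables, the Schur-type kernel bound, and Young's inequality), so the truncation fallback is unnecessary.
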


The proof of Theorem \ref{thm:main_projections}
follows from the following bounds on
the distances at each sufficiently coarse level:

\begin{prop}
\label{prop:bound_layer}
Adopt the notation of Theorem \ref{thm:main_projections},
let $\tau > 0$, and suppose $\varepsilon = \varepsilon(\varphi) \le \sqrt{\tau}$.
Then
\begin{align}
\|G_\tau \ast (\calP_\calU f - \calP_\calU f_\varphi)\|_{L^r}
\le C \cdot \|f\|_{M^r(\calU)} \cdot \frac{\varepsilon}{\sqrt{\tau}},
\end{align}
where $C>0$ depends on the dimension $d$.
\end{prop}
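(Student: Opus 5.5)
Since $\calP_\calU$ commutes with convolution by a Gaussian on $\calU$ (a Gaussian in $D$ dimensions with the same $\tau$ factorizes, and projecting a $D$-dimensional Gaussian convolution gives the $d$-dimensional one), I will write $G_\tau \ast (\calP_\calU f - \calP_\calU f_\varphi) = \calP_\calU(G^{(D)}_\tau \ast (f - f_\varphi))$. By duality and a change of variables $y = \varphi(x)$, for $h$ in $L^{r'}(\calU)$,
\begin{align}
\la G_\tau \ast (\calP_\calU f - \calP_\calU f_\varphi), h\ra
= \int_{\R^D} f(y) \Big( (G_\tau\ast h)(\pi y) - (G_\tau \ast h)(\pi \varphi^{-1}(y)) \Big)\,dy ,
\end{align}
where $\pi$ is the orthogonal projection onto $\calU$ and $G_\tau \ast h$ is the $d$-dimensional convolution on $\calU$. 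Note that $|\pi y - \pi\varphi^{-1}(y)| \le \varepsilon$, so the task reduces to controlling the difference of the smoothed function $H = G_\tau \ast h$ at two points at distance at most $\varepsilon \le \sqrt\tau$.

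To do this I will use the mean value theorem: $|H(u) - H(v)| \le \varepsilon \sup_{|w - u|\le \varepsilon} |\nabla H(w)|$. The gradient is $\nabla G_\tau \ast h$, and since $\varepsilon \le \sqrt{\tau}$, I will dominate the local supremum by a convolution with a single majorant kernel: $\sup_{|s|\le \sqrt\tau}|\nabla G_\tau(z+s)| \le C\tau^{-1/2} K_\tau(z)$, where $K_\tau(z) = (\pi\tau)^{-d/2} e^{-|z|^2/(2\tau)}$ up to polynomial factors absorbed into a slightly wider Gaussian. This follows from $|\nabla G_\tau(z)| \le C \tau^{-1/2} (\pi\tau)^{-d/2} (|z|/\sqrt\tau) e^{-|z|^2/\tau}$ together with $|z+s|^2 \ge |z|^2/2 - |s|^2$. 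Thus $|H(u)-H(v)| \le C(\varepsilon/\sqrt\tau)(K_\tau \ast |h|)(u)$, where $\|K_\tau\|_{L^1}$ is bounded by a dimensional constant.

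Plugging this in gives
\begin{align}
|\la \cdot, h\ra| \le C\frac{\varepsilon}{\sqrt\tau}\int_\calU \calP_\calU(|f|)(x)\,(K_\tau\ast|h|)(x)\,dx \le C\frac{\varepsilon}{\sqrt\tau}\|f\|_{M^r(\calU)}\|h\|_{L^{r'}}
\end{align}
by H\"older's and Young's inequalities. Taking the supremum over $\|h\|_{L^{r'}}\le1$ yields the claim. Note that only $f$'s mixed norm enters, because I evaluate at $y$ with the weight $f(y)$, so the hypothesis on $f_\varphi$ is not even needed here.

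The main obstacle is the pointwise majorization of the gradient supremum over an $\varepsilon$-ball by an integrable kernel with the correct $\tau^{-1/2}$ scaling. I must make sure the constants depend only on $d$ and that the mean value segment stays in $\calU$; it does, since I apply the mean value theorem to $H$ on $\calU$ itself, so the domain $B$ is irrelevant. The justification of the duality in the case $r=1$ (where $r'=\infty$) and the Fubini step are routine given $f \in L^1$.
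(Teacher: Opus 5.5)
Your proof is correct, but its core step differs from the paper's.

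\textbf{The paper's route.} It never dualizes. It bounds $|G_\tau\ast(\calP_\calU f-\calP_\calU f_\varphi)(x)|$ pointwise by $\int(\calP_\calU|f|)(u)\,K_\tau(x,u)\,du$, using the non-convolution kernel $K_\tau(x,u)=\max_{w\in B(u,\varepsilon)}|G_\tau(x-u)-G_\tau(x-w)|$, realized through an argmax $w^*(x,u)$. It then applies Schur's test. Both $\int K_\tau\,dx$ and $\int K_\tau\,du$ are controlled by rescaling to $\tau=1$ and invoking Lemma~\ref{lem:bound_xi}, a Taylor-series estimate of $\int|G(y)-G(y+\xi(y))|\,dy$ for an arbitrary displacement field with $|\xi(y)|\le\delta\le 1$.

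\textbf{Your route.} You move the Gaussian onto the test function and apply the mean value theorem to $G_\tau\ast h$. You then dominate $\sup_{|s|\le\sqrt\tau}|\nabla G_\tau(z+s)|$ by $C\tau^{-1/2}$ times a single wider Gaussian. Concretely, $(\pi\tau)^{-d/2}e^{-|z|^2/(4\tau)}$ works, has $L^1$ norm $2^d$, and gives $C=2^{d+1}$. H\"older and Young then replace Schur's test and the lemma.

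In effect, your gradient majorization shows $K_\tau(x,u)\le C(\varepsilon/\sqrt\tau)\,\widetilde K_\tau(x-u)$ for a translation-invariant kernel $\widetilde K_\tau$. The same bound works directly in the primal, so the duality step is harmless but not essential.

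\textbf{What each buys.} Your route avoids the argmax construction (and any question about its measurability) and the series computation. It also gives an explicit constant with no $O((\varepsilon/\sqrt\tau)^2)$ remainder. The paper's route keeps, to leading order, the constant $2\Gamma((d+1)/2)/\Gamma(d/2)=\|\nabla G_1\|_{L^1}$, which grows only like $\sqrt d$. Your wider majorant gives a constant of order $2^{d}$. This is immaterial here, since $C$ need only depend on $d$.

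You are also right that the hypothesis on $f_\varphi$ is not needed. The paper's proof likewise only involves $\calP_\calU|f|$. That hypothesis enters only in Theorem~\ref{thm:main_projections}, through the crude bound $2\|f\|_{M^r(\calU)}$ at fine scales.
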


We will make use of the following lemma:

\begin{lem}
\label{lem:bound_xi}
Suppose $0 < \delta \le 1$, and $|\xi(y)| \le \delta$
for all $y$ in $\R^d$. Let $G = G_1$. Then
\begin{align}
\int_{\R^d} |G(y) - G(y + \xi(y))| dy
\le 2\frac{\Gamma((d+1)/2)}{\Gamma(d/2)} \delta + O(\delta^2).
\end{align}
\end{lem}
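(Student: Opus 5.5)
The plan is to bound the integrand pointwise using the mean value theorem along the segment from $y$ to $y+\xi(y)$, and then integrate. For $G = G_1(y) = \pi^{-d/2} e^{-|y|^2}$ we have $\nabla G(y) = -2y\,G(y)$. Hence
\begin{align}
|G(y) - G(y+\xi(y))| \le \int_0^1 |\nabla G(y + t\xi(y))| \, |\xi(y)| \, dt
\le \delta \int_0^1 2|y+t\xi(y)|\, G(y+t\xi(y)) \, dt .
\end{align}
The leading term comes from replacing $y + t\xi(y)$ by $y$. This gives $\delta \int_{\R^d} 2|y| G(y)\,dy$, and we compute it in polar coordinates. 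Since
\begin{align}
\int_{\R^d} |y| G(y)\, dy = \pi^{-d/2}\,\frac{2\pi^{d/2}}{\Gamma(d/2)} \int_0^\infty \rho^{d} e^{-\rho^2}\, d\rho = \frac{\Gamma((d+1)/2)}{\Gamma(d/2)},
\end{align}
the leading term is exactly $2\frac{\Gamma((d+1)/2)}{\Gamma(d/2)}\,\delta$. Note that the lemma needs no measurability or regularity of $\xi$ beyond what makes the integral defined, since the argument is pointwise in $y$.

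It remains to show the error is $O(\delta^2)$. We control $h(z) = 2|z|G(z)$ at $z = y + t\xi(y)$, where $|t\xi(y)| \le \delta \le 1$.

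\emph{Step 1 (the Gaussian factor).} Using $|y+s|^2 \ge |y|^2 - 2|y|\delta$, we get
\begin{align}
G(y+s) \le G(y)\, e^{2\delta|y|}.
\end{align}

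\emph{Step 2 (the linear factor).} We have $|y+s| \le |y| + \delta$.

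\emph{Step 3 (combining).} The combined integrand satisfies
\begin{align}
h(y+s) \le 2(|y|+\delta)\, G(y)\, e^{2\delta |y|}.
\end{align}
Write $e^{2\delta|y|} = 1 + (e^{2\delta|y|} - 1)$ and use $e^{u} - 1 \le u e^{u}$ with $\delta \le 1$. The excess over $2|y|G(y)$ is then bounded by
\begin{align}
\delta\,\bigl(2G(y) + 4|y|^2 G(y) e^{2|y|} + 4|y| G(y) e^{2|y|}\bigr).
\end{align}
Each of these functions is integrable against the Gaussian weight, because $e^{2|y|-|y|^2}$ decays. This yields an error of at most $C_d\,\delta$ in the integral of $h$, hence at most $C_d\,\delta^2$ after multiplying by the prefactor $\delta$.

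The main obstacle is keeping this error uniform over all $y \in \R^d$, including large $|y|$, where the shift could in principle increase $G$. The exponential bound $e^{2\delta|y|}$ in Step 1 handles this: it is dominated by $e^{-|y|^2}$, so all constants are finite and depend only on $d$. This uniformity is what lets the $O(\delta^2)$ constant be independent of $\xi$.
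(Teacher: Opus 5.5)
Your proof is correct, and it reaches the same leading constant by a different mechanism than the paper.

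The paper uses the exact factorization $G(y+\xi)=G(y)e^{-|\xi|^2}e^{-2\langle y,\xi\rangle}$ and drops $e^{-|\xi|^2}\le 1$. It then splits $|e^{|\xi|^2}-e^{-2\langle y,\xi\rangle}|\le |e^{|\xi|^2}-1|+|e^{-2\langle y,\xi\rangle}-1|$. The second term is expanded in a Taylor series and integrated term by term against the Gaussian moments $\int G(y)|y|^k\,dy=\Gamma((d+k)/2)/\Gamma(d/2)$. The $k=1$ term gives the leading constant, and the tail $k\ge 2$, which converges even at $\delta=1$, gives the $O(\delta^2)$.

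You instead linearize $G$ itself via the fundamental theorem of calculus along the segment. You control the remainder with a single dominating function built from $G(y+s)\le G(y)e^{2\delta|y|}$ and $e^u-1\le ue^u$. I checked your excess bound $\delta\,(2G+4|y|^2Ge^{2|y|}+4|y|Ge^{2|y|})$, and it is right.

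What each route buys:
\begin{itemize}
\item The paper's route gives a fully explicit remainder: a power series in $\delta$ with Gaussian-moment coefficients, plus at most $(e-1)\delta^2$. It relies on the special exponential structure of the Gaussian.
\item Your route identifies the leading constant conceptually as $\|\nabla G\|_{L^1}=2\int|y|G(y)\,dy$. It would work verbatim for any kernel for which $y\mapsto\sup_{|s|\le 1}|\nabla G(y+s)|$ is integrable.
\item The price of your route is a cruder, less explicit $C_d$, involving integrals of $e^{2|y|-|y|^2}$ times polynomial factors.
\end{itemize}
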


\begin{proof}[Proof of Lemma \ref{lem:bound_xi}]

We have the factorization
\begin{align}
G(y + \xi(y))
&= \frac{1}{\pi^{d/2}} e^{-|y + \xi(y)|^2}
\nonumber \\
&= \frac{1}{\pi^{d/2}} e^{-|y|^2} e^{-(2\la y,\xi(y)\ra +  |\xi(y)|^2)}
\nonumber \\
&= G(y) e^{-|\xi(y)|^2} e^{-2\la y,\xi(y)\ra},
\end{align}
and so
\begin{align}
\int |G(y) - G(y + \xi(y))| dy
&= \int \left| G(y) e^{-|\xi(y)|^2} e^{|\xi(y)|^2}
    - G(y) e^{-|\xi(y)|^2} e^{-2\la y,\xi(y)\ra} \right| dy
\nonumber \\
&= \int G(y) e^{-|\xi(y)|^2}  \left| e^{|\xi(y)|^2} - e^{-2\la y,\xi(y)\ra} \right| dy
\nonumber \\
&\le \int G(y) \left| e^{|\xi(y)|^2} - e^{-2\la y,\xi(y)\ra} \right| dy
\nonumber \\
&= \int G(y) \left| e^{|\xi(y)|^2} - 1 + 1 - e^{-2\la y,\xi(y)\ra} \right| dy
\nonumber \\
&\le \int G(y) \left| e^{|\xi(y)|^2} - 1 \right| dy
    + \int G(y) \left| e^{-2\la y,\xi(y)\ra} - 1\right| dy.
\end{align}

For the first integral, since $|\xi(y)| \le \delta \le 1$,
there is a universal constant $C > 0$ such that for all $y$,
\begin{align}
\label{eq:bound4030201}
e^{|\xi(y)|^2} - 1 = C |\xi(y)|^2
\le C \delta^2,
\end{align}
and so, since $\int G(y) dy = 1$,
\begin{align}
\int G(y) \left| e^{|\xi(y)|^2} - 1 \right| dy
\le C  \delta^2.
\end{align}

For the second integral: we have the Taylor expansion
\begin{align}
e^{-2\la y,\xi(y)\ra} - 1
= \sum_{k=1}^{\infty} \frac{(-2\la y,\xi(y)\ra)^k}{k!},
\end{align}
and so, since $|\la y,\xi(y)\ra| \le |y| |\xi(y)| \le \delta |y|$,
\begin{align}
\left| e^{-2\la y,\xi(y)\ra} - 1 \right|
\le \sum_{k=1}^{\infty} \frac{(2 \delta)^k }{k!} |y|^k.
\end{align}

Now, for each $k \ge 1$, changing into spherical coordinates
and using that the area of the sphere of radius $r$ in $\R^d$
is
\begin{align}
A_{d-1}(r) = r^{d-1} \frac{2 \pi^{d/2}}{\Gamma(d/2)},
\end{align}
we get
\begin{align}
\int G(y) |y|^k dy
&= \int \frac{e^{-|y|^2}}{\pi^{d/2}} |y|^k dy
\nonumber \\
&= \frac{2 \pi^{d/2}}{\Gamma(d/2)}
    \int_{0}^{\infty} r^{d-1} r^k \frac{e^{-r^2}}{\pi^{d/2}} dr
\nonumber \\
&= \frac{2}{\Gamma(d/2)}
    \int_{0}^{\infty} r^{d+k-1} e^{-r^2}dr,
\nonumber \\
&= \frac{2}{\Gamma(d/2)}
    \int_{0}^{\infty} r^{d+k-2} e^{-r^2} rdr,
\end{align}
and using the change-of-variables $s = r^2$, $ds = 2rdr$,
\begin{align}
\int G(y) |y|^k dy
&= \frac{2}{\Gamma(d/2)}
    \int_{0}^{\infty} r^{d+k-2} e^{-r^2} rdr
\nonumber \\
&= \frac{1}{\Gamma(d/2)}
    \int_{0}^{\infty} s^{(d+k)/2 - 1} e^{-s} ds
\nonumber \\
&= \frac{\Gamma((d+k)/2)}{\Gamma(d/2)}.
\end{align}

Consequently, we have the Taylor series
\begin{align}
\label{eq:bound4030201-1}
\int G(y) \left| e^{-2\la y,\xi(y)\ra} - 1\right| dy
&\le \sum_{k=1}^{\infty} \frac{(2 \delta)^k }{k!} \int G(y) |y|^k dy
\nonumber \\
&= \sum_{k=1}^{\infty} \frac{\Gamma((d+k)/2)}{\Gamma(d/2) k!} (2\delta)^k
\nonumber \\
&= 2\frac{\Gamma((d+1)/2)}{\Gamma(d/2)} \delta + O(\delta^2).
\end{align}

Combining \eqref{eq:bound4030201-1} with \eqref{eq:bound4030201}
gives the desired bound
\begin{align}
\int_{\R^d} |G(y) - G(y + \xi(y))| dy
\le 2\frac{\Gamma((d+1)/2)}{\Gamma(d/2)} \delta + O(\delta^2).
\end{align}

\end{proof}

\begin{proof}[Proof of Proposition \ref{prop:bound_layer}]

We recall that we wish to show the bound
\begin{align}
\|G_\tau \ast (\calP_\calU f - \calP_\calU f_\varphi)\|_{L^r}
\le C \cdot \|f\|_{M^r(\calU)} \cdot \frac{\varepsilon}{\sqrt{\tau}},
\end{align}
for $\varepsilon \le \sqrt{\tau}$.
Without loss of generality, we identify the $d$-dimensional subspace $\calU$
with $\R^d$; and for brevity, we let $\calP = \calP_\calU$.

First, letting $\psi = \varphi^{-1}$, and $\psi(y) = (\psi_1, \psi_2) \in \R^d \times \R^{D-d}$,
\begin{align}
(G_\tau \ast \calP f_\varphi)(x)
&= \int_{\R^d} G_\tau(x-w) (\calP f_\varphi)(w) \, dw
\nonumber \\
&= \int_{\R^d} G_\tau(x-w) \int_{\R^{D-d}} f_\varphi(w,y) \,dy \, dw
\nonumber \\
&= \int_{\R^d} \int_{\R^{D-d}} G_\tau(x-w) f(\varphi(w,y)) |\nabla \varphi(w,y)| \,dy \, dw
\nonumber \\
&= \int_{\RR^d} \int_{\RR^{D-d}} G_\tau(x-\psi_1(u,v)) f(u,v) \, dv \,du
\end{align}
and similarly
\begin{align}
(G_\tau \ast \calP f)(x)
= \int_{\RR^d} \int_{\RR^{D-d}} G_\tau(x-u) f(u,v)  \, dv \,du .
\end{align}

Therefore,
\begin{align}
(G_{\tau} \ast [\calP f - \calP f_\varphi])(x)
&= \int_{\RR^d} \int_{\RR^{D-d}}
    \left[ G_\tau(x-u) - G_\tau(x-\psi_1(u,v)) \right] f(u,v) \, dv\, du,
\end{align}
and so
\begin{align}
\left| (G_{\tau} \ast [\calP f - \calP f_\varphi])(x) \right|
&\le \int_{\RR^d} \int_{\RR^{D-d}}
    \left| G_\tau(x-u) - G_\tau(x-\psi_1(u,v)) \right| \cdot |f(u,v)| \, dv\, du,
\nonumber \\
&\le \int_{\RR^d} \left(\int_{\RR^{D-d}} |f(u,v)| \, dv \right)
    \left(\sup_{v \in \R^{D-d}}\left| G_\tau(x-u) - G_\tau(x-\psi_1(u,v)) \right| \right) \, du
\nonumber \\
&= \int_{\RR^d} (\calP |f|)(u)
    \left(\sup_{v \in \R^{D-d}}\left| G_\tau(x-u) - G_\tau(x-\psi_1(u,v)) \right| \right) \, du.
\end{align}

Since $|\psi_1(u,v) - u| \le \varepsilon$,
\begin{align}
\sup_{v \in \RR^{D-d}}\left| G_\tau(x-u) - G_\tau(x-\psi_1(u,v)) \right|
\le \max_{w \in B(u,\varepsilon)} \left| G_\tau(x-u) - G_\tau(x-w) \right|,
\end{align}
noting that the maximum is achieved since the function being maximized
is continuous and the closed ball $B(u,\varepsilon)$ is compact;
we therefore define
\begin{align}
w^*(x,u)
= \argmax_{w \in B(u,\varepsilon)}\left| G_\tau(x-u) - G_\tau(x-w) \right|.
\end{align}

For brevity, let
\begin{align}
K_\tau(x,u)
= \left| G_\tau(x-u) - G_\tau(x-w^*(x,u)) \right|.
\end{align}
We then wish to bound the $L^r$ norm of the function
\begin{align}
x \mapsto \int_{\R^d} K_\tau(x,u) h(u) du,
\end{align}
where $h(u) = \calP |f|$.
We will follow a standard argument, by first bounding
the integrals $\int K_\tau(x,u) du$ and $\int K_\tau(x,u) dx$,
making use of Lemma \ref{lem:bound_xi}.

After a change of variables $y = (x-u)/\sqrt{\tau}$, i.e.\ $x = \sqrt{\tau} y+u$,
\begin{align}
\int K_\tau(x,u) dx = \int |G_\tau(x-u) - G_\tau(x - w^*(x,u))| dx
&= \int \left|G(y)
    - G\left(\frac{\sqrt{\tau}y + u - w^*(\sqrt{\tau}y+u,u)}{\sqrt{\tau}}\right)\right| dy
\nonumber \\
&= \int \left|G(y)
    - G\left(y+\frac{u - w^*(\sqrt{\tau}y+u,u)}{\sqrt{\tau}}\right)\right| dy
\nonumber \\
&= \int |G(y) - G(y + \xi(y))| dy,
\end{align}
where
\begin{align}
\xi(y) = \frac{u - w^*(\sqrt{\tau}y+u,u)}{\sqrt{\tau}}.
\end{align}
Note that, since $w^*(x,u) \in B(u,\varepsilon)$,
\begin{align}
|\xi(y)| \le \frac{\varepsilon}{\sqrt{\tau}}.
\end{align}
From Lemma \ref{lem:bound_xi}, therefore,
\begin{align}
\int |G_\tau(x-u) - G_\tau(x - w^*(x,u))| dx
\le 2\frac{\Gamma((d+1)/2)}{\Gamma(d/2)} \frac{\varepsilon}{\sqrt{\tau}}
    + O\left(\left( \frac{\varepsilon}{\sqrt{\tau}}\right)^2\right).
\end{align}

Next, we bound the other integral,
using a nearly identical argument.
Indeed, after a change of variables $y = (x-u)/\sqrt{\tau}$, i.e.\ $u = x - \sqrt{\tau} y$,
\begin{align}
\int K_\tau(x,u) du = \int |G_\tau(x-u) - G_\tau(x - w^*(x,u))| du
&= \int \left|G(y)
    - G\left(\frac{x - w^*(x,x-\sqrt{\tau}y)}{\sqrt{\tau}}\right)\right| dy
\nonumber \\
&= \int \left|G(y)
    - G\left(y+\frac{x-\sqrt{\tau}y - w^*(x,x-\sqrt{\tau}y)}{\sqrt{\tau}}\right)\right| dy
\nonumber \\
&= \int |G(y) - G(y + \xi(y))| dy,
\end{align}
where
\begin{align}
\xi(y) = \frac{x-\sqrt{\tau}y - w^*(x,x-\sqrt{\tau}y)}{\sqrt{\tau}}.
\end{align}
Note that, since $w^*(x,x-\sqrt{\tau}y) \in B(x-\sqrt{\tau}y,\varepsilon)$,
\begin{align}
|\xi(y)| \le \frac{\varepsilon}{\sqrt{\tau}}.
\end{align}
From Lemma \ref{lem:bound_xi}, therefore,
\begin{align}
\int |G_\tau(x-u) - G_\tau(x - w^*(x,u))| du
\le 2\frac{\Gamma((d+1)/2)}{\Gamma(d/2)} \frac{\varepsilon}{\sqrt{\tau}}
    + O\left(\left( \frac{\varepsilon}{\sqrt{\tau}}\right)^2\right).
\end{align}

It then follows (see, e.g.\ Theorem 6.18 in \cite{folland1999real})
that the $L^r$ operator norm of the kernel $K_\tau$
is bounded above
\begin{align}
\|K_\tau\|_{L^r \to L^r}
    \le 2\frac{\Gamma((d+1)/2)}{\Gamma(d/2)} \frac{\varepsilon}{\sqrt{\tau}}
    + O\left(\left( \frac{\varepsilon}{\sqrt{\tau}}\right)^2\right),
\end{align}
and as a consequence,
\begin{align}
\| G_\tau \ast [\calP f - \calP_\varphi]\|_{L^r}
\le \left( \left| \int (\calP |f|)(u) K_\tau(x,u) du \right|^r dx \right)^{1/r}
\le C \|\calP (|f|) \|_{L^r} \frac{\varepsilon}{\sqrt{\tau}}
= C \cdot \|f \|_{M^r(\calU)} \cdot \frac{\varepsilon}{\sqrt{\tau}},
\end{align}
as desired.

\end{proof}

We now employ Proposition \ref{prop:bound_layer}
to prove Theorem \ref{thm:main_projections}.
We first state an elementary lemma,
which we prove for the reader's convenience.

\begin{lem}
\label{lem:dumb_bound}
For all $0 < \eta < 1$ and
for all $0 < \beta < 1$,
\begin{align}
\frac{1}{1 - \eta^\beta} \le \frac{1}{1 - \eta} \cdot \frac{1}{\beta}.
\end{align}
\end{lem}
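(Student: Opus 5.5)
The plan is to reduce the inequality to a comparison between a geometric sum and a single power. Since $0<\eta<1$ and $0<\beta<1$, both denominators are positive, so the claim is equivalent to
\begin{align}
\beta(1-\eta) \le 1 - \eta^\beta .
\end{align}
I will prove this by examining $h(\eta) = 1 - \eta^\beta - \beta(1-\eta)$ on $(0,1]$, in the same spirit as the monotonicity argument for Lemma \ref{lem:dumb_bound-2}.

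First, note that $h(1) = 0$. Next, compute $h'(\eta) = -\beta \eta^{\beta-1} + \beta = \beta(1 - \eta^{\beta-1})$. Since $\beta - 1 < 0$ and $0<\eta<1$, we have $\eta^{\beta-1} > 1$, so $h'(\eta) < 0$ on $(0,1)$. Thus $h$ is strictly decreasing on $(0,1]$ and vanishes at $\eta = 1$, which gives $h(\eta) > 0$ for $\eta \in (0,1)$. This is the desired inequality, and taking reciprocals finishes the proof.

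Equivalently, one can argue by concavity: $\eta \mapsto \eta^\beta$ is concave for $0<\beta<1$, so it lies below its tangent line at $\eta = 1$. That is, $\eta^\beta \le 1 + \beta(\eta - 1)$, which rearranges to the same statement.

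There is no genuine obstacle here. The only care needed is in the signs: $\beta - 1 < 0$ is what forces $\eta^{\beta-1} > 1$, and the positivity of $1-\eta$ and $1-\eta^\beta$ is what justifies inverting the inequality.
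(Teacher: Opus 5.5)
Your proof is correct and matches the paper's: the paper also clears denominators to get $H(\eta)=\beta\eta-\eta^\beta+1-\beta\ge 0$, and uses $H(1)=0$ together with $H'(\eta)=\beta(1-\eta^{\beta-1})<0$ on $(0,1)$. Your tangent-line (concavity) remark is a valid one-line equivalent of the same fact. The opening reference to a ``geometric sum'' does not correspond to anything in your argument and can be dropped.
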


\begin{proof}[Proof of Lemma \ref{lem:dumb_bound}]

The desired inequality is equivalent to
\begin{align}
\beta \eta - \eta^\beta + 1  - \beta \ge 0.
\end{align}
Fix $0 < \beta < 1$, and denote the function on the left by $H(\eta)$.
Then $H(0) = 1 - \beta > 0$,
and $H(1) = \beta - 1 + 1 - \beta = 0$.
Furthermore,
\begin{align}
H'(\eta) = \beta - \beta \eta^{\beta-1}
= \beta(1 - \eta^{\beta-1}) < 0,
\end{align}
since $0 < \eta < 1$ and $\beta - 1 < 0$;
therefore, $H(\eta)$ decreases from $H(0) = 1- \beta$
to $H(1) = 0$, and hence must always be positive, as desired.

\end{proof}

\begin{proof}[Proof of Theorem \ref{thm:main_projections}]
Let
\begin{align}
M = \lfloor \log_{1/\omega} (\tau_0 / \varepsilon^2)\rfloor.
\end{align}

We break the proof into the three regimes of $\alpha$
appearing in the definition of $\Delta_{\alpha,p}$.

\item
\paragraph{Case 1: $0 < \alpha < 1$.}
If $k \le M$, then $\varepsilon / \sqrt{\tau_k} \le 1$,
and so, by Proposition \ref{prop:bound_layer},
\begin{align}
\| G_{\tau_k} \ast [\calP f - \calP f_\varphi]\|_{L^r}
\le C \cdot \|f\|_{M^r(\calU)} \cdot \frac{\varepsilon}{\sqrt{\tau_k}},
\end{align}
where $C$ depends only on the dimension $d$.
Therefore,
\begin{align}
\sum_{k=0}^{M} \tau_{k}^{p \alpha / 2} \| G_{\tau_k} \ast [\calP f - \calP f_\varphi]\|_{L^r}^p
&\le C^p \cdot \|f\|_{M^r(\calU)}^p \cdot
    \sum_{k=0}^{M} \tau_{k}^{p \alpha / 2} \frac{\varepsilon^p}{\tau_k^{p/2}}
\nonumber \\
&\le C^p \cdot \|f\|_{M^r(\calU)}^p \cdot \varepsilon^p \cdot \tau_0^{(\alpha-1)p/2}
    \sum_{k=0}^{M} \omega^{k(\alpha-1)p/2}
\nonumber \\
&\le C^p \cdot \|f\|_{M^r(\calU)}^p \cdot \varepsilon^p \cdot \tau_0^{(\alpha-1)p/2}
    \frac{\omega^{M(\alpha-1)p/2}}{1 - \omega^{(1-\alpha)p/2}}
\nonumber \\
&\le C^p \cdot \|f\|_{M^r(\calU)}^p \cdot \varepsilon^p \cdot
    \frac{\varepsilon^{(\alpha - 1)p}}{1 - \omega^{(1-\alpha)p/2}}
\nonumber \\
&= C^p \cdot \|f\|_{M^r(\calU)}^p \cdot \varepsilon^{p \alpha} \cdot
    \frac{1}{1 - \omega^{(1-\alpha)p/2}},
\end{align}
and so using Lemma \ref{lem:dumb_bound},
\begin{align}
\label{eq:bound_alpha_small-1}
\sum_{k=0}^{M} \tau_{k}^{p \alpha / 2} \| G_{\tau_k} \ast [\calP f - \calP f_\varphi]\|_{L^r}^p
\le C^p \cdot \|f\|_{M^r(\calU)}^p \cdot \varepsilon^{p \alpha} \cdot
    \frac{1}{1 - \omega^{p/2}} \cdot \frac{1}{1 - \alpha}.
\end{align}

For the remainder of the series,
we use the simpler bound
$\| G_{\tau_k} \ast [\calP f - \calP_\varphi]\|_{L^r} \le \| \calP f - \calP f_\varphi\|_{L^r}
\le 2 \| f \|_{M^r(\calU)}$,
and get
\begin{align}
\sum_{k=M+1}^{\infty} \tau_{k}^{p \alpha / 2} \| G_{\tau_k} \ast [\calP f - \calP f_\varphi]\|_{L^r}^p
&\le 2^p \| f \|_{M^r(\calU)}^p \sum_{k=M+1}^{\infty} \tau_{k}^{p \alpha / 2}
\nonumber \\
&\le 2^p \| f \|_{M^r(\calU)}^p \tau_0^{p \alpha/2}
        \sum_{k=M+1}^{\infty} \omega^{k p \alpha / 2}
\nonumber \\
&= 2^p \| f \|_{M^r(\calU)}^p \tau_0^{p \alpha/2}
        \frac{\omega^{(M+1)p \alpha / 2}}{1 - \omega^{p\alpha/2}}
\nonumber \\
&\le 2^p \| f \|_{M^r(\calU)}^p \cdot
        \frac{1}{1 - \omega^{p\alpha/2}} \cdot \varepsilon^{p \alpha}
\end{align}
and again
using Lemma \ref{lem:dumb_bound},
\begin{align}
\label{eq:bound_alpha_small-2}
\sum_{k=M+1}^{\infty} \tau_{k}^{p \alpha / 2} \| G_{\tau_k} \ast [\calP f - \calP f_\varphi]\|_{L^r}^p
\le 2^p \| f \|_{M^r(\calU)}^p \cdot \varepsilon^{p \alpha} \cdot
        \frac{1}{1 - \omega^{p/2}} \cdot \frac{1}{\alpha}.
\end{align}

Adding the bounds \eqref{eq:bound_alpha_small-1}
and \eqref{eq:bound_alpha_small-2} gives
\begin{align}
\D_{\alpha,p,r}(f,f_\varphi)^p
&\le C^p \cdot \|f\|_{M^r(\calU)}^p \cdot \varepsilon^{p \alpha} \cdot
    \frac{1}{1 - \omega^{p/2}} \cdot \frac{1}{1 - \alpha}
        + 2^p \| f \|_{M^r(\calU)}^p \cdot \varepsilon^{p \alpha} \cdot
        \frac{1}{1 - \omega^{p/2}} \cdot \frac{1}{\alpha}
\nonumber \\
&= C^p \cdot \|f\|_{M^r(\calU)}^p \cdot \frac{1}{1 - \omega^{p/2}} \cdot
    \left( \frac{1}{1 - \alpha} + \frac{1}{\alpha}\right) \cdot \varepsilon^{p \alpha}
\nonumber \\
&= C^p \cdot \|f\|_{M^r(\calU)}^p \cdot \frac{1}{1 - \omega^{p/2}} \cdot
    \frac{1}{\alpha(1 - \alpha)}  \cdot \varepsilon^{p \alpha},
\end{align}
and taking a $p$-th root and using Lemma \ref{lem:dumb_bound-2}
gives
\begin{align}
\D_{\alpha,p,r}(f,f_\varphi)
&\le C \cdot \|f\|_{M^r(\calU)} \cdot \frac{1}{1 - \omega^{1/2}} \cdot
    \frac{1}{(\alpha - \alpha^2)^{1/p}}  \cdot \varepsilon^{\alpha}
\nonumber \\
&= C \cdot \|f\|_{M^r(\calU)} \cdot \frac{\tau_0^{\alpha/2}}{1 - \omega^{1/2}} \cdot
    \frac{1}{(\alpha - \alpha^2)^{1/p}}  \cdot \varepsilon^{\alpha} / \tau_0^{\alpha/2}
\nonumber \\
&= C \cdot \|f\|_{M^r(\calU)} \cdot \frac{\tau_0^{\alpha/2}}{1 - \omega^{1/2}} \cdot
    \Delta_{\alpha,p}(\varepsilon / \sqrt{\tau_0}).
\end{align}

\item
\paragraph{Case 2: $\alpha = 1$.}

If $k \le M$, then $\varepsilon / \sqrt{\tau_k} \le 1$,
and so, by Proposition \ref{prop:bound_layer},
\begin{align}
\| G_{\tau_k} \ast [\calP f - \calP f_\varphi]\|_{L^r}
\le C \cdot \|f\|_{M^r(\calU)} \cdot \frac{\varepsilon}{\sqrt{\tau_k}},
\end{align}
where $C$ depends only on the dimension $d$.
Therefore,
\begin{align}
\label{eq:bound_alpha_one-1}
\sum_{k=0}^{M} \tau_{k}^{p  / 2} \| G_{\tau_k} \ast [\calP f - \calP f_\varphi]\|_{L^r}^p
&\le C^p \cdot \|f\|_{M^r(\calU)}^p \cdot
    \sum_{k=0}^{M} \tau_{k}^{p / 2} \frac{\varepsilon^p}{\tau_k^{p/2}}
\nonumber \\
&\le C^p \cdot \|f\|_{M^r(\calU)}^p \cdot \varepsilon^p \cdot M
\nonumber \\
&\le C^p \cdot \|f\|_{M^r(\calU)}^p \cdot \varepsilon^p
        \cdot \log_{1/\omega} (\tau_0 / \varepsilon^2).
\end{align}
For the remainder of the series,
we use the elementary bound
\begin{align}
\label{eq:log_dumb}
\log(1/t) \ge 1 - t, \quad 0 < t \le 1.
\end{align}
Indeed, the two sides agree at $t=1$ (where they are both $0$);
and the difference $\log(1/t) - 1 + t$ has
derivative $-1/t + 1 \le 0$.

Turning to the series, we again use the simpler bound
$\| G_{\tau_k} \ast [\calP f - \calP_\varphi]\|_{L^r} \le 2 \| f \|_{M^r(\calU)}$,
and get
\begin{align}
\label{eq:bound_alpha_one-2}
\sum_{k=M+1}^{\infty} \tau_{k}^{p / 2} \| G_{\tau_k} \ast [\calP f - \calP f_\varphi]\|_{L^r}^p
&\le 2^p \| f \|_{M^r(\calU)}^p \tau_0^{p /2}
        \sum_{k=M+1}^{\infty} \omega^{k p  / 2}
\nonumber \\
&= 2^p \| f \|_{M^r(\calU)}^p \tau_0^{p /2}
        \frac{\omega^{(M+1)p  / 2}}{1 - \omega^{p/2}}
\nonumber \\
&\le 2^p \| f \|_{M^r(\calU)}^p \cdot
        \frac{1}{1 - \omega^{p/2}} \cdot \varepsilon^{p }.
\end{align}

Adding the bounds \eqref{eq:bound_alpha_one-1}
and \eqref{eq:bound_alpha_one-2},
and using that $(p/2)\log(1/\omega) = \log(1/\omega^{p/2}) \ge 1 - \omega^{p/2}$,
gives
\begin{align}
\D_{1,p,r}(f,f_\varphi)^p
&\le C^p \cdot \|f\|_{M^r(\calU)}^p \cdot \varepsilon^p
        \cdot \log_{1/\omega} (\tau_0 / \varepsilon^2)
        + 2^p \| f \|_{M^r(\calU)}^p \cdot
        \frac{1}{1 - \omega^{p/2}} \cdot \varepsilon^{p }
\nonumber \\
&= C^p \cdot \|f\|_{M^r(\calU)}^p \cdot \frac{\varepsilon^p}{\tau_0^{p/2}}
        \cdot \log (\tau_0 / \varepsilon^2) \cdot \frac{\tau_0^{p/2}}{\log(1/\omega)}
        + 2^p \| f \|_{M^r(\calU)}^p \cdot
        \frac{\tau_0^{p/2}}{1 - \omega^{p/2}} \cdot \frac{\varepsilon^{p }}{\tau_0^{p/2}}
\nonumber \\
&= C^p \cdot \|f\|_{M^r(\calU)}^p \cdot \frac{\varepsilon^p}{\tau_0^{p/2}}
        \cdot \left(\log (\tau_0 / \varepsilon^2) \cdot \frac{\tau_0^{p/2}}{\log(1/\omega)}
            + \frac{\tau_0^{p/2}}{1 - \omega^{p/2}}\right)
\nonumber \\
&= C^p \cdot \|f\|_{M^r(\calU)}^p \cdot \frac{\varepsilon^p}{\tau_0^{p/2}}
        \cdot \left(\log (\tau_0 / \varepsilon^2) \cdot \frac{(p/2)\tau_0^{p/2}}{(p/2)\log(1/\omega)}
            + \frac{\tau_0^{p/2}}{1 - \omega^{p/2}}\right)
\nonumber \\
&= C^p \cdot \|f\|_{M^r(\calU)}^p \cdot \tau_0^p \cdot \frac{\varepsilon^p}{\tau_0^{p/2}}
        \cdot \left(\log (\sqrt{\tau_0} / \varepsilon) \cdot \frac{p}{1 - \omega^{p/2}}
            + \frac{1}{1 - \omega^{p/2}}\right)
\nonumber \\
&\le C^p \cdot \|f\|_{M^r(\calU)}^p \cdot \tau_0^{p/2} \cdot \frac{\varepsilon^p}{\tau_0^{p/2}}
        \cdot \left(\log (\sqrt{\tau_0} / \varepsilon)
            + 1 \right) \cdot \frac{p}{1 - \omega^{p/2}},
\end{align}
and taking a $p$-th root and using Lemma \ref{lem:dumb_bound-2}
and that $p^{1/p}$ is bounded gives
\begin{align}
\D_{1,p,r}(f,f_\varphi)^p
&\le C \cdot \|f\|_{M^r(\calU)} \cdot \frac{\tau_0^{1/2}}{1 - \omega^{1/2}}
        \cdot \frac{\varepsilon}{\tau_0^{1/2}}
        \cdot \left(\log (\sqrt{\tau_0} / \varepsilon)
            + 1 \right)^{1/p}
\nonumber \\
&= C \cdot \|f\|_{M^r(\calU)} \cdot \frac{\tau_0^{1/2}}{1 - \omega^{1/2}} \cdot
        \Delta_{1,p}(\varepsilon/\sqrt{\tau_0}).
\end{align}

\item
\paragraph{Case 3: $\alpha > 1$.}

If $k \le M$, then $\varepsilon / \sqrt{\tau_k} \le 1$,
and so, by Proposition \ref{prop:bound_layer},
\begin{align}
\| G_{\tau_k} \ast [\calP f - \calP f_\varphi]\|_{L^r}
\le C \cdot \|f\|_{M^r(\calU)} \cdot \frac{\varepsilon}{\sqrt{\tau_k}},
\end{align}
where $C$ only depends on the dimension $d$.
Therefore,
\begin{align}
\label{eq:bound_alpha_big-1}
\sum_{k=0}^{M} \tau_{k}^{p \alpha / 2} \| G_{\tau_k} \ast [\calP f - \calP f_\varphi]\|_{L^r}^p
&\le C^p \cdot \|f\|_{M^r(\calU)}^p \cdot
    \sum_{k=0}^{M} \tau_{k}^{p \alpha / 2} \frac{\varepsilon^p}{\tau_k^{p/2}}
\nonumber \\
&\le C^p \cdot \|f\|_{M^r(\calU)}^p \cdot \varepsilon^p \cdot \tau_0^{(\alpha-1)p/2}
    \sum_{k=0}^{M} \omega^{k(\alpha-1)p/2}
\nonumber \\
&\le C^p \cdot \|f\|_{M^r(\calU)}^p \cdot \varepsilon^p \cdot
    \frac{\tau_0^{(\alpha-1)p/2}}{1 - \omega^{(\alpha-1)p/2}}.
\end{align}

For the remainder of the series,
we again use the simpler bound
$\| G_{\tau_k} \ast [\calP f - \calP_\varphi]\|_{L^r} \le 2 \| f \|_{M^r(\calU)}$,
and, using that $(\varepsilon^2 / \tau_0)^\alpha \le \varepsilon^2 / \tau_0$,
we get
\begin{align}
\label{eq:bound_alpha_big-2}
\sum_{k=M+1}^{\infty} \tau_{k}^{p \alpha / 2} \| G_{\tau_k} \ast [\calP f - \calP f_\varphi]\|_{L^r}^p
&\le 2^p \| f \|_{M^r(\calU)}^p \tau_0^{p \alpha/2}
        \sum_{k=M+1}^{\infty} \omega^{k p \alpha / 2}
\nonumber \\
&= 2^p \| f \|_{M^r(\calU)}^p \tau_0^{p \alpha/2}
        \frac{\omega^{(M+1)p \alpha / 2}}{1 - \omega^{p\alpha/2}}
\nonumber \\
&\le 2^p \| f \|_{M^r(\calU)}^p \tau_0^{p \alpha/2}
        \frac{(\varepsilon^2 / \tau_0)^{p \alpha / 2}}{1 - \omega^{p\alpha/2}}
\nonumber \\
&\le 2^p \| f \|_{M^r(\calU)}^p \tau_0^{p \alpha/2}
        \frac{(\varepsilon^2 / \tau_0)^{p / 2}}{1 - \omega^{p\alpha/2}}
\nonumber \\
&\le 2^p \| f \|_{M^r(\calU)}^p \cdot \varepsilon^{p} \cdot
        \frac{\tau_0^{(\alpha-1)p/2}}{1 - \omega^{p\alpha/2}}.
\end{align}

Adding the bounds \eqref{eq:bound_alpha_big-1}
and \eqref{eq:bound_alpha_big-2},
\begin{align}
\D_{\alpha,p,r}(f,f_\varphi)^p
&\le C^p \cdot \|f\|_{M^r(\calU)}^p \cdot \varepsilon^p \cdot
    \frac{\tau_0^{(\alpha-1)p/2}}{1 - \omega^{(\alpha-1)p/2}}
        + 2^p \| f \|_{M^r(\calU)}^p \cdot \varepsilon^{p} \cdot
        \frac{\tau_0^{(\alpha-1)p/2}}{1 - \omega^{p\alpha/2}}
\nonumber \\
&= C^p \cdot \|f\|_{M^r(\calU)}^p \cdot \frac{\varepsilon^p}{\tau_0^{p/2}}
    \cdot \tau_0^{\alpha p/2} \cdot \left(\frac{1}{1 - \omega^{(\alpha-1)p/2}}
            + \frac{1}{1 - \omega^{p\alpha/2}}\right),
\end{align}
and taking $p$-th roots,
\begin{align}
\D_{\alpha,p,r}(f,f_\varphi)
&\le C \cdot \|f\|_{M^r(\calU)} \cdot \frac{\varepsilon}{\tau_0^{1/2}}
    \cdot \tau_0^{\alpha/2} \cdot \left(\frac{1}{1 - \omega^{(\alpha-1)p/2}}
            + \frac{1}{1 - \omega^{p\alpha/2}}\right)^{1/p}
\nonumber \\
&\le C \cdot \|f\|_{M^r(\calU)} \cdot \frac{\varepsilon}{\tau_0^{1/2}}
    \cdot \tau_0^{\alpha/2} \cdot \left[\left(\frac{1}{1 - \omega^{(\alpha-1)p/2}}\right)^{1/p}
            + \left(\frac{1}{1 - \omega^{p\alpha/2}} \right)^{1/p}\right]
\nonumber \\
&\le C \cdot \|f\|_{M^r(\calU)} \cdot \frac{\varepsilon}{\tau_0^{1/2}}
    \cdot \tau_0^{\alpha/2} \cdot \left[\left(\frac{1}{1 - \omega^{(\alpha-1)p/2}}\right)^{1/p}
            + \frac{1}{1 - \omega^{1/2}} \right].
\end{align}

For the last term, consider two cases.
When $1 < \alpha < 2$, we have the bound
\begin{align}
\frac{1}{1 - \omega^{(\alpha-1)p/2}}
\le \frac{1}{1 - \omega^{p/2}} \cdot \frac{1}{\alpha - 1},
\end{align}
and so
\begin{align}
\left(\frac{1}{1 - \omega^{(\alpha-1)p/2}}\right)^{1/p}
\le \frac{1}{1 - \omega^{1/2}} \cdot \frac{1}{(\alpha - 1)^{1/p}},
\end{align}
whereas for $\alpha \ge 2$, we have the simpler bound
\begin{align}
\left( \frac{1}{1 - \omega^{(\alpha-1)p/2}} \right)^{1/p}
\le \left(\frac{1}{1 - \omega^{p/2}} \right)^{1/p}
\le \frac{1}{1-\omega^{1/2}}.
\end{align}
Putting these cases together,
\begin{align}
\label{eq:54930201}
\left(\frac{1}{1 - \omega^{(\alpha-1)p/2}}\right)^{1/p}
&\le \frac{1}{1 - \omega^{1/2}} \cdot \frac{1}{\min\{1,(\alpha-1)^{1/p}\}}
\end{align}
and so the bracketed term may be bounded
\begin{align}
\left(\frac{1}{1 - \omega^{(\alpha-1)p/2}}\right)^{1/p}
            + \frac{1}{1 - \omega^{1/2}}
&\le \frac{1}{1 - \omega^{1/2}} \cdot \frac{2}{\min\{1,(\alpha-1)^{1/p}\}}
\end{align}
and consequently
\begin{align}
\D_{\alpha,p,r}(f,f_\varphi)
\le C \cdot \|f\|_{M^r(\calU)} \cdot \frac{\varepsilon}{\tau_0^{1/2}}
    \cdot  \frac{\tau_0^{\alpha/2}}{1 - \omega^{1/2}}\cdot \frac{1}{\min\{1,(\alpha-1)^{1/p}\}}
= C \cdot \|f\|_{M^r(\calU)}
    \cdot  \frac{\tau_0^{\alpha/2}}{1 - \omega^{1/2}}
        \cdot \Delta_{\alpha,p}(\varepsilon/\sqrt{\tau_0}),
\end{align}
as claimed.

\end{proof}

\subsection{Changes in projection angle}

Suppose $f : \RR^3 \to \RR$, $U$ is in SO(3), and $f_U(x) = f(Ux)$.
Let $e_z = (0,0,1)$, $v = U e_z$, and $\theta \in [0,\pi)$ be the angle
between $e_z$ and $v$.
Let $\calP$ denote the tomographic projection operator onto the $xy$-plane:
\begin{align}
(\calP f)(x,y) = \int f(x,y,z) \,dz.
\end{align}

We then have the following result:
\begin{thm}
\label{thm:rotations_projections}
Let $f$ be supported on the ball $B(0,R)$
of radius $R > 0$.
Suppose $\tau_0 \ge (2 R \sin(\theta/2))^2$.
Then
\begin{align}
\D_{\alpha,p,r}(\calP f, \calP f_U; \, \text{SO(2)})
\le C \cdot \|f\|_{M^r} \cdot \frac{\tau_0^{\alpha/2}}{1 - \omega^{1/2}}
    \cdot \Delta_{\alpha,p}(2 R \sin(\theta/2) / \sqrt{\tau_0}),
\end{align}
where $C>0$ is a universal constant.
\end{thm}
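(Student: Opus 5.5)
The plan is to reduce the statement to Theorem \ref{thm:main_projections} by exhibiting an in-plane rotation and a deformation of small maximum displacement. Write $f_U = f\circ U$. Since $|\det U| = 1$, $f_U$ is exactly the deformation $f_\varphi$ with $\varphi = U$ in the sense of Section 3.1. The displacement $|x - Ux|$ is not small in general, however, because $U$ may rotate by a large angle about the $z$-axis. The first step is therefore to factor $U = V W$. Here $W \in \SO(3)$ is a rotation about $e_z$, acting on the $xy$-plane by some $W_0 \in \SO(2)$. The factor $V$ is the minimal rotation taking $e_z$ to $v = Ue_z$, namely the rotation by angle $\theta$ about the axis $e_z \times v$; if $\theta = 0$, take $V = I$. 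The identity $U = VW$ holds because $V^{-1}U$ fixes $e_z$ and hence is a rotation about $e_z$.

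The second step handles $W$. Since $W$ fixes $e_z$ and preserves the $z$-integration, we have $\calP(g\circ W) = (\calP g)\circ W_0$ for any $g$. Applying this with $g = f\circ V$ gives $\calP f_U = (\calP f_V)\circ W_0$. Taking $W_0^{-1}$ in the minimum that defines the $\SO(2)$-invariant distance, and using that the $\Gamma$-norm is invariant under rotations (Gaussians are radial and $L^r$ norms are rotation invariant), we obtain
\begin{align}
\D_{\alpha,p,r}(\calP f, \calP f_U;\, \SO(2)) \le \D_{\alpha,p,r}(\calP f, \calP f_V).
\end{align}
This reduces the statement to the case $U = V$, the rotation by angle $\theta$.

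The third step applies Theorem \ref{thm:main_projections} with $D = 3$, $d = 2$, $\calU$ the $xy$-plane, and $\varphi = V$. The sets $A$ and $B$ can be taken to be an open ball slightly larger than $B(0,R)$, or all of $\R^3$. The controlling quantity is the maximum displacement. For a rotation by angle $\theta$, we have $|x - Vx| = 2|x_\perp|\sin(\theta/2)$, where $x_\perp$ is the component of $x$ orthogonal to the axis. This is at most $2R\sin(\theta/2)$ on $B(0,R)$.

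Here lies the main obstacle. The theorem as stated defines $\varepsilon$ as a maximum over all of $B$, and the proof of Proposition \ref{prop:bound_layer} uses $|\psi_1(u,v) - u| \le \varepsilon$ only at points $(u,v)$ in the support of $f$. I would therefore note that the proof goes through with $\varepsilon$ replaced by the maximum displacement over $\mathrm{supp} f \subset B(0,R)$ (or over a ball of radius $R+\eta$, letting $\eta \to 0$). This gives $\varepsilon \le 2R\sin(\theta/2)$.

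Two hypotheses of Theorem \ref{thm:main_projections} must also be checked. First, the norm condition $\|f_V\|_{M^r(\calU)} \le \|f\|_{M^r(\calU)}$ need not hold for a single plane. The resulting bound can instead be stated in terms of $\|f\|_{M^r}$, the supremum over planes. This works because $\|f_V\|_{M^r(\calU)} = \|f\|_{M^r(V\calU)} \le \|f\|_{M^r}$, and in the proof only the bound $\|\calP f - \calP f_V\|_{L^r} \le 2\|f\|_{M^r}$ and the bound on $\|\calP|f|\|_{L^r}$ are used. Second, the strict inequality $\varepsilon < \sqrt{\tau_0}$ is replaced by the assumed $\tau_0 \ge (2R\sin(\theta/2))^2$. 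Equality is harmless, since Proposition \ref{prop:bound_layer} only requires $\varepsilon \le \sqrt{\tau}$.

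The last step concerns monotonicity of $\Delta_{\alpha,p}$. Theorem \ref{thm:main_projections} yields the bound with $\Delta_{\alpha,p}(\varepsilon/\sqrt{\tau_0})$, and I want it with $\Delta_{\alpha,p}(2R\sin(\theta/2)/\sqrt{\tau_0})$. For $\alpha \ne 1$ this follows because $\Delta_{\alpha,p}$ is increasing. For $\alpha = 1$, the derivative of $t(\log(1/t)+1)$ is $\log(1/t) \ge 0$ on $(0,1]$, and raising the factor $(\log(1/t)+1)$ to the power $1/p \le 1$ preserves monotonicity because $\log(1/t)+1 \ge 1$ there. Combining these steps gives the claim with a universal constant, since $d = 2$ is fixed.
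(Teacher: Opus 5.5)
Your proposal is correct and follows essentially the same route as the paper. Both arguments split $U$ into a rotation about $e_z$, which is absorbed by the $\SO(2)$ minimization, and a tilt by $\theta$ whose maximum displacement on $B(0,R)$ is $2R\sin(\theta/2)$, and then invoke Theorem \ref{thm:main_projections}; the only difference is that the paper obtains the displacement from the trace formula of Lemma \ref{lem:max_dist_rot}, while you compute it directly.

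Your write-up is more careful than the paper's on several points it passes over. With $f_U = f\circ U$, the $e_z$-rotation must act first, as in your $U = VW$, for $\calP f_U$ to be an in-plane rotation of $\calP f_V$; the paper's displayed product has the factors in the opposite order. You also correctly replace the hypothesis $\|f_\varphi\|_{M^r(\calU)} \le \|f\|_{M^r(\calU)}$ by the bound $\|f\|_{M^r}$, and you check that the non-strict inequality $\varepsilon \le \sqrt{\tau_0}$ suffices. The domain issue you call the main obstacle is milder than you suggest: simply taking $A = B = B(0,R)$, as Lemma \ref{lem:max_dist_rot} does, already gives $\varepsilon = 2R\sin(\theta/2)$, so no monotonicity or limiting argument is needed.
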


It is convenient to first state one elementary lemma,
whose proof we sketch for the reader's convenience.

\begin{lem}
\label{lem:max_dist_rot}
Suppose $U$ is in $\SO(3)$, and consider its
domain to be $B(0,R)$, the ball of radius $R>0$
centered at the origin. Then
\begin{align}
\varepsilon(U)^2 = \max_{x \in B(0,R)}|x - Ux|^2
= R^2(3 - \tr(U)).
\end{align}
\end{lem}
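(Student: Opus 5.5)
The plan is to reduce to a computation on the sphere of radius $R$ followed by an explicit eigen-decomposition of $U$. I will show three things: that the maximum is attained on the boundary sphere, that the quadratic form $|x-Ux|^2$ is governed by the rotation angle of $U$, and that the trace formula $\tr(U) = 1 + 2\cos\phi$ converts that angle into the stated expression.

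First I expand the squared displacement. Since $U$ is orthogonal,
\begin{align*}
|x - Ux|^2 = 2|x|^2 - 2\la x, Ux\ra = \la x, (2I - U - U^T) x\ra .
\end{align*}
This is a quadratic form in $x$ with symmetric matrix $S = 2I - U - U^T$, and $S$ is positive semidefinite because the form is a squared norm. Hence the maximum over $B(0,R)$ equals $R^2 \lambda_{\max}(S)$, attained at $|x| = R$ along a top eigenvector.

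Next I compute the spectrum of $S$ using the structure of $\SO(3)$. Every $U \in \SO(3)$ is a rotation by some angle $\phi \in [0,\pi]$ about a unit axis $a$. In an orthonormal basis $\{a, b, c\}$ adapted to this axis, $U$ acts as the identity on $a$ and as the $2\times 2$ rotation $R_\phi$ on $\mathrm{span}\{b,c\}$. There $U + U^T$ equals $2\cos\phi \cdot I$, so $S = \mathrm{diag}\bigl(0,\, 2 - 2\cos\phi,\, 2 - 2\cos\phi\bigr)$ and $\lambda_{\max}(S) = 2 - 2\cos\phi$. Finally $\tr(U) = 1 + 2\cos\phi$, which gives $2 - 2\cos\phi = 3 - \tr(U)$ and therefore $\varepsilon(U)^2 = R^2(3 - \tr(U))$.

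No step is a real obstacle; the only point needing care is the invocation of the axis-angle normal form for $\SO(3)$, which is standard. Alternatively, I can avoid it by noting directly that $\tr(S) = 6 - 2\tr(U)$, that $S a = 0$, and that $S$ commutes with $U$ and acts as a scalar on $a^\perp$. For use in Theorem \ref{thm:rotations_projections}, it is worth recording the equivalent form $3 - \tr(U) = 4\sin^2(\phi/2)$, so that $\varepsilon(U) = 2R\sin(\phi/2)$.
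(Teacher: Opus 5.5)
Your proof is correct and follows essentially the same route as the paper, which also puts $U$ in axis--angle normal form and uses $\tr(U) = 1 + 2\cos\phi$ to identify $2 - 2\cos\phi$ with $3 - \tr(U)$. The one difference is your quadratic form $S = 2I - U - U^T = \mathrm{diag}(0,\, 2-2\cos\phi,\, 2-2\cos\phi)$. It shows that the maximum over the ball is attained on the circle of radius $R$ orthogonal to the axis, a step the paper's sketch asserts without proof.
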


\begin{proof}[Proof of Lemma \ref{lem:max_dist_rot}]
Without loss of generality assume $R=1$.
Let $v$ be the fixed axis of $U$, so
that in an orthonormal basis starting with $v$, the matrix representation
of $U$ is of the form
\begin{align}
\left[
\begin{array}{ccc}
1          & 0           & 0               \\
0          & \cos(\phi)  & -\sin(\phi) \\
0          & \sin(\phi)  & \cos(\phi)  
\end{array}
\right],
\end{align}
where $\phi$ is the angle which $U$ rotates points
around $v$. Then $U$ moves all points on the unit circle
of the plane orthogonal to $v$ the same distance,
whose squared value is
\begin{align}
\varepsilon(U)^2 = (1 - \cos(\phi))^2 + \sin(\phi)^2
= 2(1 - \cos(\phi))
= 3 - \tr(U),
\end{align}
as claimed.

\end{proof}

\begin{proof}[Proof of Theorem \ref{thm:rotations_projections}]

Without loss of generality,
suppose that $v$ lies in the $yz$-plane. Then the rotation
$U = U(\theta,\psi)$ may be described by the composition of a rotation by $\theta$
in the $yz$ plane (bringing $v$ to $e_z$) with a rotation in the $xy$-plane
by another angle, $\psi$:
\begin{align}
U(\theta,\psi) &=
\left[
\begin{array}{ccc}
\cos(\psi)    & -\sin(\psi)    & 0        \\
\sin(\psi)    & \cos(\psi)     & 0        \\
0             & 0              & 1        
\end{array}
\right]
\left[
\begin{array}{ccc}
1          & 0           & 0               \\
0          & \cos(\theta)  & -\sin(\theta) \\
0          & \sin(\theta)  & \cos(\theta)  
\end{array}
\right]
\nonumber \\
&= 
\left[
\begin{array}{ccc}
\cos(\psi)          & -\sin(\psi) \cos(\theta)      & \sin(\psi)\sin(\theta)          \\
\sin(\psi)          & \cos(\psi) \cos(\theta)       & -\cos(\psi)\sin(\theta)          \\
0                   & \sin(\theta)                  & \cos(\theta)  
\end{array}
\right].
\end{align}

By Lemma \ref{lem:max_dist_rot},
\begin{align}
\varepsilon(U(\theta,\psi))^2
= R^2(3 - \tr(U(\theta,\psi)))
= R^2(3 - \cos(\psi) - \cos(\psi) \cos(\theta) - \cos(\theta)).
\end{align}
Minimizing this over the choice of in-plane rotation $\psi$
gives $\psi = 0$,
and the corresponding squared distance is
\begin{align}
\varepsilon(U(\theta,0))^2
= R^2 (2 - 2 \cos(\theta))
= R^2 (2 - 2 (1 - 2\sin(\theta/2)^2))
= 4 R^2 \sin(\theta/2)^2.
\end{align}

Consequently, since $\psi$
determines the in-plane rotation,
\begin{align}
\D_{\alpha,p,r}(\calP f, \calP f_{U(\theta,\psi)}; \, \text{SO(2)})
&= \min_{Q \in \SO(2)} \D_{\alpha,p,r}(\calP f, \calP f_{U(\theta,\psi)} \circ Q)
\nonumber \\
&= \min_{Q \in \SO(2)} \D_{\alpha,p,r}(\calP f, \calP f_{U(\theta,0)} \circ Q)
\nonumber \\
&\le \D_{\alpha,p,r}(\calP f, \calP f_{U(\theta,0)}).
\end{align}
By Theorem \ref{thm:main_projections}, then, we have the bound
\begin{align}
\D_{\alpha,p,r}(\calP f, \calP f_{U(\theta,0)})
\le C \cdot \|f\|_{M^r} \cdot \frac{\tau_0^{\alpha/2}}{1 - \omega^{1/2}}
    \cdot \Delta_{\alpha,p}(2 R \sin(\theta/2) / \sqrt{\tau_0}),
\end{align}
as claimed.

\end{proof}

\begin{rmk}
Similar results bounding the distance between
tomographic projections by
an increasing function of the angle between projection directions
are known for rotationally-invariant
Wasserstein distance \cite{rao2020wasserstein}
and rotationally-invariant sliced Wasserstein distance \cite{shi2025fast}.
Using the results from \cite{leeb2025sliced},
a similar result can be easily deduced for the sliced Cram\'er metrics
as well.
\end{rmk}

\subsection{Convolutions}

Images observed in scientific problems
are often convolved with a linear filter
induced from the measurement apparatus.
In cryo-EM, for example, the projection images
are filtered by the contrast transfer function \cite{bendory2020single, singer2020computational, van2000single, doerr2016single, vulovic2013image}.
It is therefore of interest to consider the effect
of such linear filters on distances between the images.
The following result is immediate:

\begin{prop}
\label{prop:convolutions}
Let $\alpha > 0$, $p \ge 1$, $r \ge 1$.
Suppose $f$ and $g$ are in
$L^r$,
and $w$ is in $L^1$.
Then
\begin{align}
\label{eq:convolutions}
\D_{\alpha,p,r}(f \ast w, g \ast w) \le
\|w\|_{L^1} \cdot \D_{\alpha,p,r}(f, g).
\end{align}

\end{prop}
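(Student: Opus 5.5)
The plan is to work scale by scale, using only two facts: convolution is linear, and convolution is commutative and associative. First, by linearity, $f \ast w - g \ast w = (f-g) \ast w$. Setting $h = f - g \in L^r$, the claim reduces to the norm inequality
\begin{align}
\|h \ast w\|_{\Gamma_{p,r}^\alpha} \le \|w\|_{L^1} \cdot \|h\|_{\Gamma_{p,r}^\alpha}.
\end{align}

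Next, fix a level $k \ge 0$. Since $G_{\tau_k}$, $h$ and $w$ are all integrable or in $L^r$, I would rewrite $G_{\tau_k} \ast (h \ast w) = (G_{\tau_k} \ast h) \ast w$. Here $G_{\tau_k} \ast h \in L^r$ by Young's inequality, since $\|G_{\tau_k}\|_{L^1} = 1$. Applying Young's inequality once more (Theorem 8.7 in \cite{folland1999real}, exactly as in the proof of Proposition \ref{prop:trivial}) gives
\begin{align}
\|G_{\tau_k} \ast (h \ast w)\|_{L^r} \le \|w\|_{L^1} \cdot \|G_{\tau_k} \ast h\|_{L^r}.
\end{align}

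Finally, I raise this to the $p$-th power, multiply by $\tau_k^{p\alpha/2}$, and sum over $k \ge 0$. This yields $\|h \ast w\|_{\Gamma_{p,r}^\alpha}^p \le \|w\|_{L^1}^p \|h\|_{\Gamma_{p,r}^\alpha}^p$, and taking $p$-th roots gives \eqref{eq:convolutions}. If $\D_{\alpha,p,r}(f,g) = \infty$ there is nothing to prove.

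The only point needing care is justifying the associativity $G_{\tau_k} \ast (h \ast w) = (G_{\tau_k} \ast h) \ast w$ almost everywhere. I would handle it with Fubini--Tonelli: the triple integral $\int\int |G_{\tau_k}(x-y)|\,|h(y-z)|\,|w(z)|\,dz\,dy$ is finite for a.e.\ $x$, because $|G_{\tau_k}| \ast |h| \ast |w| \in L^r$ by two applications of Young's inequality to nonnegative functions. That finiteness licenses exchanging the order of integration. Beyond this, the argument is routine.
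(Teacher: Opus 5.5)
Your proof is correct and matches the paper's: it reduces to $h=f-g$, rewrites $G_{\tau_k}\ast(h\ast w)$ as $w\ast(G_{\tau_k}\ast h)$, applies Young's inequality at each scale, and sums over $k$. The paper does not justify the associativity step, so your Fubini--Tonelli argument (via $|G_{\tau_k}|\ast|h|\ast|w|\in L^r$) is a correct and welcome addition.
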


\begin{proof}
The proof is an immediate consequence of Young's inequality
for convolutions.
Indeed, for any $\tau$,
\begin{align}
\|G_{\tau} \ast ((f \ast w) - (g \ast w))\|_{L^r}
= \|w \ast G_{\tau} \ast (f - g)\|_{L^r}
\le \|w\|_{L^1} \|G_{\tau} \ast (f - g)\|_{L^r},
\end{align}
from which the result follows.
\end{proof}

\begin{rmk}
Of course, the analogous result holds for
any $L^r$ distance, also due to Young's inequality.
It is also known for other metrics,
such as the sliced Cram\'er metric \cite{zhang2023cramer, leeb2025sliced}.
\end{rmk}

\section{Discretizations in 2D}

We describe Fourier-based discretizations
of the metrics in 2D.
We only consider the case where $r=2$,
and denote the corresponding metric $\D_{\alpha,p}(g,h)$.

Throughout, we let $g$ and $h$ be supported in $B(0,R_0)$,
the ball of radius $R_0 > 0$ centered at the origin;
and we let $L_0 = 2 R_0$.
We suppose $R > R_0$ is big enough so that $|(G_{\tau_0} \ast g )(t)|$
and $|(G_{\tau_0} \ast h )(t)|$
are negligibly small for $|t| \ge R$.
More precisely,
let $R = R_0 + \sqrt{\tau_0 \cdot  \log(1/\epsilon)}$,
where $\epsilon$ is a specified tolerance.
We also let $L = 2R$.
Let $t_j = -R + 2Rj / n$, $j=0,\dots,n-1$,
For simplicity, we will assume that $n$ is even.
We assume that we observe samples
$g(t_{j_1}, t_{j_2})$ and $h(t_{j_1}, t_{j_2})$
of $g$ and $h$ on the Cartesian grid;
note that if we only observe samples from
the grid over $(-R_0,R_0) \times (-R_0,R_0)$,
the additional samples over the full
domain $(-R,R) \times (-R,R)$ can be obtained from zero-padding.

\subsection{Discretization in 2D, without rotations}

In this section, we let $f = g-h$;
our goal is fast approximation of $\D_{\alpha,p}(g,h) = \|f\|_{\Gamma_p^\alpha}$.
For $j_1,j_2 = 0,\dots,n-1$, let $x[j_1,j_2] = f(t_{j_1}, t_{j_2}) = g(t_{j_1}, t_{j_2})
- h(t_{j_1}, t_{j_2})$
denote the samples of $f$ on the Cartesian grid.
We define the normalized DFT of $x$ by
\begin{align}
\what{x}[k_1,k_2]
    = \frac{L^2}{n^2} \sum_{j_1=0}^{n-1} \sum_{j_2=0}^{n-1}x[j_1,j_2]
        e^{-2 \pi \i (k_1 t_{j_1}  + k_2 t_{j_2} )/L},
\end{align}
so that, for $\max\{|k_1|, |k_2|\} \le n/2$,
\begin{align}
\what{x}[k_1,k_2] \approx \what{f}(k_1/L,k_2/L).
\end{align}

Denote by $\what{G}_{\tau}(\xi_1,\xi_2)$
the Gaussian Fourier transform:
\begin{align}
\what{G}_{\tau}(\xi_1,\xi_2) = e^{- \pi^2 \tau (\xi_1^2 + \xi_2^2)},
\end{align}
and denote the approximate Fourier coefficients of $G_\tau \ast f $
by
\begin{align}
\what{x}_\tau[k_1,k_2] = \what{x}[k_1,k_2] \what{G}_{\tau}(k_1/L,k_2/L),
\end{align}
so that 
\begin{align}
\what{x}_\tau[k_1,k_2] \approx \what{(G_\tau \ast f)}(k_1/L,k_2/L).
\end{align}

We then define the approximation to the $L^2$ norm
at level $\tau$ by
\begin{align}
S(\tau)^2 = \frac{1}{L^2}\sum_{k_1=-n/2}^{n/2-1} \sum_{k_2=-n/2}^{n/2-1}
        |\what{x}_{\tau}[k_1,k_2]|^2.
\end{align}
Then
\begin{align}
S(\tau)^2 \approx \|G_{\tau} \ast f\|_{L^2}^2.
\end{align}
Finally, we define the approximation to $\|f\|_{\Gamma_p^{\alpha}}$
by
\begin{align}
\|x\|_{\gamma_p^\alpha}
    = \left(\sum_{\ell = 0}^{M_n} \tau_\ell^{p \alpha / 2} S(\tau_\ell)^p \right)^{1/p},
\end{align}
for a specified $M_n = O(\log(n))$.
Then $\|x\|_{\gamma_p^\alpha} \approx \|f\|_{\Gamma_p^\alpha}
= \D_{\alpha,p}(g,h)$.

\begin{rmk}
Note that the calculation of $\|x\|_{\gamma_p^\alpha}$ can be performed
with $O(n^2 \log(n))$ floating-point operations. Indeed, a 2D FFT
computes all $\what{x}[k_1,k_2]$ at cost $O(n^2 \log(n))$;
and at each of the $M_n = O(\log(n))$ layers, $n^2$ multiplications
are performed to evaluate all $\what{x}_{\tau_\ell}[k_1,k_2]$
and evaluate the sums defining $S(\tau_\ell)^2$;
then an additional $M_n = O(\log(n))$ operations are required to
sum over all $M_n$ levels.
\end{rmk}

We prove an error bound:
\begin{thm}
\label{thm:error_no_rotations}
Suppose $g$ and $h$ are $C^{\infty}(\R)$,
and supported in $B(0,R_0)$.
Let $f = g-h$. Fix $J > 1$,
let $0 < \epsilon < n^{-J}$,
and $R = R_0 + \sqrt{\tau_0 \cdot  \log(1/\epsilon)}$.
Then using $M_n = \lfloor (2J/p\alpha) \log_{1/\omega}(n) \rfloor$
levels in the definition of $\|x\|_{\gamma_p^\alpha}$,
\begin{align}
\left| \|x\|_{\gamma_p^\alpha} - \|f\|_{\Gamma_p^\alpha}\right| = O\left(n^{-J} \right).
\end{align}
\end{thm}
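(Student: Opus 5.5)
We split the error into three pieces and show each is $O(n^{-J})$: (i) truncating the series defining $\|f\|_{\Gamma_p^\alpha}$ at level $M_n$; (ii) replacing $\|G_{\tau}\ast f\|_{L^2}$ on $\R^2$ by its $L^2$ norm over the box $[-R,R]^2$, equivalently by the Fourier series of the $L$-periodization of $G_\tau\ast f$; and (iii) replacing the true Fourier coefficients by the DFT quantities $\what{x}_\tau[k_1,k_2]$ and truncating the frequency sum to $|k_i|\le n/2$. By the triangle inequality in the weighted $\ell^p$ norm over levels, we have
\begin{align}
\left|\|x\|_{\gamma_p^\alpha}-\|f\|_{\Gamma_p^\alpha}\right|
\le \left(\sum_{\ell>M_n}\tau_\ell^{p\alpha/2}\|G_{\tau_\ell}\ast f\|_{L^2}^p\right)^{1/p}
+\left(\sum_{\ell=0}^{M_n}\tau_\ell^{p\alpha/2}\left|S(\tau_\ell)-\|G_{\tau_\ell}\ast f\|_{L^2}\right|^p\right)^{1/p}.
\end{align}

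For the tail we use $\|G_\tau\ast f\|_{L^2}\le\|f\|_{L^2}$, exactly as in Proposition \ref{prop:trivial}. The tail is then at most $\|f\|_{L^2}\,\tau_0^{\alpha/2}\,\omega^{(M_n+1)\alpha/2}(1-\omega^{p\alpha/2})^{-1/p}$. Since $\omega^{M_n}\le\omega^{-1} n^{-2J/(p\alpha)}$, this is $O(n^{-J/p})$. This falls short of $O(n^{-J})$ when $p>1$, so here $M_n$ is taken with enough margin (or the exponent is read as $J$ per level-sum) to land at $O(n^{-J})$. I would double-check this against the stated choice of $M_n$; the bookkeeping may need the tail bound on the $p$-th power, which is $O(n^{-J})$ before taking roots.

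For (ii), $f$ is supported in $B(0,R_0)$, so for $|t|\ge R$ we have $|(G_{\tau}\ast f)(t)|\le \|f\|_{L^1}\sup_{|s|\ge R-R_0}G_\tau(s)$. For $\tau\le\tau_0$ this is at most $C\tau^{-1}\epsilon^{\tau_0/\tau}\le C\tau^{-1}\epsilon$, with the Gaussian tail in $|s|$ also integrable. Hence the mass outside the box, and the aliasing from periodization, is $O(\epsilon\,\mathrm{poly}(n))$ uniformly over $\ell\le M_n$, because $\tau_{M_n}^{-1}$ is polynomial in $n$. The overlap of shifted copies under periodization is handled the same way. This gives Parseval: $\|G_\tau\ast f\|_{L^2}^2=L^{-2}\sum_{k\in\ZZ^2}|\what{f}(k/L)|^2\what{G}_\tau(k/L)^2$ plus an error that is $O(\epsilon)$ up to polynomial factors.

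For (iii), $f$ is $C^\infty$ with compact support, so the trapezoid rule on the periodic box is spectrally accurate. By Poisson summation,
\begin{align}
\what{x}[k]-\what{f}(k/L)=\sum_{m\neq0}\what{f}((k+nm)/L),
\end{align}
and the right side is $O(n^{-N})$ for every $N$ by the rapid decay of $\what f$. The same decay makes the frequency truncation $\sum_{\max|k_i|>n/2}|\what f(k/L)|^2=O(n^{-N})$. Squared norms differ by $O(n^{-N})$, and we pass to the norms themselves using $|a-b|\le\sqrt{|a^2-b^2|}$, which still gives $O(n^{-N/2})$. Summing $M_n=O(\log n)$ levels with weights at most $\tau_0^{\alpha/2}$ keeps everything $O(n^{-J})$ once $N$ is chosen large and $\epsilon<n^{-J}$ is used. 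The main obstacle is step (ii): we must make the periodization and aliasing error uniform over the finest levels, where $\tau_\ell\sim n^{-2J/(p\alpha)}$ makes $G_{\tau_\ell}$ sharply peaked. I would attack it by noting that the support bound is independent of $\tau$ and that $\epsilon^{\tau_0/\tau}\le\epsilon$, so the only $\tau$-dependence is the polynomial prefactor. The square-root loss in converting squared-norm errors is absorbed by the arbitrary $N$ for (iii). For (ii) the final bound is $O(\sqrt{\epsilon}\,\mathrm{poly}(n))$, and this is only $O(n^{-J})$ if $\epsilon$ is taken somewhat smaller than $n^{-J}$ or the constants are tracked more carefully.
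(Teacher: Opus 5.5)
Your decomposition (aliasing, frequency truncation and DFT error, level truncation) is essentially the paper's Steps 1--5, and your step (iii) is fine. But the argument does not close in the two places you flag, and the remedies you suggest (a larger $M_n$, a smaller $\epsilon$) change the hypotheses.

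\textbf{The level truncation.} The weighted $\ell^p$ triangle inequality makes the tail enter as $\bigl(\sum_{\ell>M_n}\tau_\ell^{p\alpha/2}\|G_{\tau_\ell}\ast f\|_{L^2}^p\bigr)^{1/p}$. With the stated $M_n$ the inner sum is only $O(n^{-J})$, since $\omega^{(M_n+1)p\alpha/2}<n^{-J}$, so you get $O(n^{-J/p})$. The missing idea is to work with $p$-th powers and then use nondegeneracy. The difference $\bigl|\|x\|_{\gamma_p^\alpha}^p-\|f\|_{\Gamma_p^\alpha}^p\bigr|$ is at most $\sum_{\ell\le M_n}\tau_\ell^{p\alpha/2}\bigl|S(\tau_\ell)^p-\|G_{\tau_\ell}\ast f\|_{L^2}^p\bigr|$ plus the tail sum. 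Given a uniform per-level error $O(n^{-J})$, both pieces are $O(n^{-J})$; the first follows by the mean value theorem, all quantities being bounded by $2\|f\|_{L^2}$ for large $n$. If $f\not\equiv 0$ then $\|f\|_{\Gamma_p^\alpha}>0$, so $t\mapsto t^{1/p}$ is Lipschitz on $[\|f\|_{\Gamma_p^\alpha}^p/2,\infty)$ (Lemma~\ref{lem:concavity_v2}). This converts the $O(n^{-J})$ error in $p$-th powers into an $O(n^{-J})$ error in the norms, with a constant depending on $f$, as the remark after the theorem allows. Using the subadditive bound of Lemma~\ref{lem:concavity} at this point would reintroduce the $1/p$ loss.

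\textbf{Step (ii).} An $O(\epsilon\,\mathrm{poly}(n))$ error in squared norms gives only $O(\sqrt{\epsilon}\,\mathrm{poly}(n))$ per level, and $\epsilon<n^{-J}$ does not make that $O(n^{-J})$. Indeed, the factor $\tau_{M_n}^{-1}\approx\tau_0^{-1}n^{2J/(p\alpha)}$ can by itself swamp $\epsilon$ when $p\alpha<2$. Both losses come from crude bookkeeping.

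The discrepancy between $L^{-2}\sum_{k\in\ZZ^2}|\what{f}(k/L)|^2\what{G}_\tau(k/L)^2$ and $\|G_\tau\ast f\|_{L^2}^2$ is exactly $\sum_{j\neq 0}H_\tau(jL)$. Here $H_\tau=(G_\tau\ast f)\ast(G_\tau\ast f_s)=F\ast G_{2\tau}$, and $F=f\ast f_s$ is supported in $B(0,L_0)$; this is what your periodization-overlap computation amounts to. Every lattice point $jL$ with $j\neq0$ lies at distance at least $L-L_0=2\sqrt{\tau_0\log(1/\epsilon)}$ from the support of $F$. Hence $G_{2\tau}(x-jL)\le(2\pi\tau)^{-1}\epsilon^{2\tau_0/\tau}$ there. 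So the error is quadratic in $\epsilon$, and bounding one factor of $G_\tau\ast f$ by $O(1)$ throws half of it away.

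Moreover, writing $\epsilon^{\tau_0/\tau}\le\epsilon$ discards exactly the decay that kills the prefactor. In fact $\tau^{-1}\epsilon^{2\tau_0/\tau}\le\tau_0^{-1}\epsilon^2$ for all $\tau\le\tau_0$ once $\epsilon\le e^{-1/2}$. The paper obtains this from Lemma~\ref{lem:gaussian_crossover} in the form $G_{2\tau}(y)\le G_{2\tau_0}(y)$ for $|y|\ge\sqrt{2\tau_0}$. The per-level squared error is then $C\tau_0^{-1}\|f\|_{L^1}^2\epsilon^2+O(n^{-2J})=O(n^{-2J})$, uniformly in $\tau$. Taking square roots gives the uniform $O(n^{-J})$ per-level bound that the level sum needs.
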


\begin{rmk}
The constants implicit in the notation ``$O\left(n^{-J} \right)$''
may depend on $J$, $f$, $\alpha$, $\omega$, $\tau_0$, and $p$.
\end{rmk}

\begin{proof}[Proof of Theorem \ref{thm:error_no_rotations}]

It will first be convenient to record several elementary
lemmas, some of whose proofs we provide for the reader's convenience.

\begin{lem}
\label{lem:mean_value_theorem}
Suppose
and $|a - \wtilde{a}| \le \delta$
for some $\delta > 0$. Let $q \ge 1$.
Then $\left| |a|^q - |\wtilde{a}|^q \right| \le q \cdot (|a| + \delta)^{q-1} \cdot \delta$.
\end{lem}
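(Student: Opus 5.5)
The plan is to apply the mean value theorem to the function $\phi(s) = |s|^q$ on the real line (or, if $a,\wtilde{a}$ are complex, to $\phi(s)=s^q$ on $[0,\infty)$ after passing to moduli). First, I reduce to moduli. By the reverse triangle inequality, $\bigl||a| - |\wtilde{a}|\bigr| \le |a - \wtilde{a}| \le \delta$. So it suffices to prove the claim for the nonnegative numbers $s = |a|$ and $\wtilde{s} = |\wtilde{a}|$, which satisfy $|s - \wtilde{s}| \le \delta$.

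Next, since $q \ge 1$, the function $\phi(t) = t^q$ is continuously differentiable on $[0,\infty)$ with $\phi'(t) = q t^{q-1}$. This derivative is nondecreasing in $t$, because $q - 1 \ge 0$. The mean value theorem gives some $c$ between $s$ and $\wtilde{s}$ with
\begin{align}
\left| s^q - \wtilde{s}^q \right| = q\, c^{q-1} |s - \wtilde{s}|.
\end{align}
Because $c \le \max\{s,\wtilde{s}\} \le s + \delta = |a| + \delta$, monotonicity of $t^{q-1}$ gives $c^{q-1} \le (|a|+\delta)^{q-1}$. Combining this with $|s-\wtilde{s}|\le\delta$ yields the claim.

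The only point needing care is the case $q=1$. There $t^{q-1}\equiv 1$, and the bound reduces directly to the reverse triangle inequality, so no issue arises with $0^0$. Likewise, if $s=\wtilde{s}$ the inequality is trivial. I expect no real obstacle: the step that carries the argument is the bound $c \le |a| + \delta$, and it follows at once from $\wtilde{s} \le s + \delta$.
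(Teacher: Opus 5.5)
Your proof is correct and is essentially the paper's argument: apply the mean value theorem, then bound the intermediate point by $\max\{|a|,|\wtilde{a}|\} \le |a|+\delta$ and use that $t^{q-1}$ is nondecreasing. Your first step, passing to $s=|a|$ and $\wtilde{s}=|\wtilde{a}|$ via the reverse triangle inequality, is a small improvement. The paper applies the MVT directly to $|x|^q$ on $\R$, which fails to be differentiable at $0$ when $q=1$ and $a,\wtilde{a}$ have opposite signs, and your reduction also makes the lemma hold for complex $a,\wtilde{a}$.
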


\begin{proof}[Proof of Lemma \ref{lem:mean_value_theorem}]
Let $h(x) = |x|^q$, so that $h'(x) = q |x|^{q-1} \sign(x)$.
By the Mean Value Theorem,
there is some $b$ between $a$ and $\wtilde a$
such that 
\begin{align}
||a|^q - |\wtilde{a}|^q| = |h(a) - h(\wtilde{a})| = |h'(b)| \cdot |a - \wtilde{a}|
= q |b|^{q-1} \cdot \delta
< q \cdot (|a| + \delta)^{q-1} \cdot \delta,
\end{align}
where we have used that $|b| \le \max\{|a|,|\wtilde{a}|\} \le |a| + \delta$.

\end{proof}

\begin{cor}
\label{cor:mvt_complex}
Suppose $z$ and $\wtilde{z}$ are complex,
with $|z - \wtilde{z}| \le \delta$ for some
$\delta > 0$.
Then $\left| |z|^2 - |\wtilde{z}|^2 \right| \le 2 \sqrt{2} \cdot |z|  \cdot \delta + 4 \delta^2$.
\end{cor}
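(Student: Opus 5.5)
The claim is Corollary \ref{cor:mvt_complex}: for complex $z,\wtilde z$ with $|z-\wtilde z|\le\delta$, we want $\left||z|^2-|\wtilde z|^2\right|\le 2\sqrt2\,|z|\,\delta+4\delta^2$. There are two natural routes. The first is a direct algebraic identity, which is sharper and avoids any case analysis. The second is to reduce to Lemma \ref{lem:mean_value_theorem} applied separately to real and imaginary parts, which is presumably why the constant $2\sqrt2$ appears. I will give the direct argument and note the componentwise route as the one matching the stated constants.

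\textbf{Direct route.} Write $\wtilde z = z + e$ with $|e|\le\delta$. Then
\begin{align}
|\wtilde z|^2 - |z|^2 = 2\,\mathrm{Re}(\bar z e) + |e|^2,
\end{align}
so
\begin{align}
\left||z|^2-|\wtilde z|^2\right| \le 2|z|\delta+\delta^2 \le 2\sqrt2\,|z|\,\delta + 4\delta^2 .
\end{align}
Equivalently, factor $|z|^2-|\wtilde z|^2=(|z|-|\wtilde z|)(|z|+|\wtilde z|)$ and use $\bigl||z|-|\wtilde z|\bigr|\le\delta$ together with $|z|+|\wtilde z|\le 2|z|+\delta$. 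Either way the claim follows.

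\textbf{Componentwise route.} Write $z=a+ib$ and $\wtilde z=\wtilde a+i\wtilde b$. Then $|a-\wtilde a|\le\delta$ and $|b-\wtilde b|\le\delta$. Apply Lemma \ref{lem:mean_value_theorem} with $q=2$ to each component:
\begin{align}
|a^2-\wtilde a^2|\le 2(|a|+\delta)\delta, \qquad |b^2-\wtilde b^2|\le 2(|b|+\delta)\delta .
\end{align}
Summing the two bounds gives $2(|a|+|b|)\delta+4\delta^2$. Finally, Cauchy--Schwarz gives $|a|+|b|\le\sqrt2\,|z|$, which yields exactly $2\sqrt2\,|z|\,\delta+4\delta^2$.

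There is no real obstacle here. The only point needing care is the last step, $|a|+|b|\le\sqrt2|z|$, which is what produces the stated constant in the componentwise route.
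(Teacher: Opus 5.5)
Your proposal is correct. Your componentwise route is exactly the paper's proof: apply Lemma \ref{lem:mean_value_theorem} with $q=2$ to the real and imaginary parts, add, and bound $|a|+|b|\le\sqrt2\,|z|$. The paper leaves that last step implicit, and you make it explicit.

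Your direct route is a genuinely different argument. Writing $\wtilde{z}=z+e$ and using $|\wtilde{z}|^2-|z|^2=2\,\mathrm{Re}(\bar z e)+|e|^2$ is self-contained and needs no mean value theorem. It also gives the sharper bound $2|z|\delta+\delta^2$. The paper's route trades that sharpness for reusing the real-variable lemma it has just proved, which is where the constants $2\sqrt2$ and $4$ come from.

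The corollary is only used in Step 3 of the proof of Theorem \ref{thm:error_no_rotations}, to pass an $O(n^{-(2J+2)})$ error from $\what{x}[k_1,k_2]$ to its squared modulus. Either bound suffices there.
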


\begin{proof}[Proof of Corollary \ref{cor:mvt_complex}]
If $z = a+\i b$ and $\wtilde z = \wtilde a + \i \wtilde b$,
then from Lemma \ref{lem:mean_value_theorem},
since $|a - \wtilde{a}| \le |z - \wtilde{z}| \le \delta$, therefore
$\left| a^2 - \wtilde{a}^2 \right| \le 2 \cdot (|a|+\delta) \cdot \delta$
and similarly,
$\left| b^2 - \wtilde{b}^2 \right| \le 2 \cdot (|b| + \delta) \cdot \delta$,
and so
\begin{align}
\left| |z|^2 - |\wtilde z|^2\right|
= \left| a^2 + b^2 - (\wtilde a^2 + \wtilde b^2) \right|
\le \left| a^2 -\wtilde a^2 \right| + \left| b^2 -\wtilde b^2 \right|
\le 2 \cdot (|a|+\delta) \cdot \delta + 2 \cdot (|b|+\delta) \cdot \delta
\le 2 \sqrt{2} \cdot |z|  \cdot \delta + 4 \delta^2,
\end{align}
as claimed.

\end{proof}

\begin{lem}
\label{lem:concavity}
Suppose $a,\wtilde{a} > 0$,
and $|a - \wtilde{a}| \le \delta$
for some $\delta > 0$. Let $p \ge 1$.
Then $\left| a^{1/p} - \wtilde{a}^{1/p} \right| \le \delta^{1/p}$.
\end{lem}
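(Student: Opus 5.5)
We want to show $|a^{1/p} - \wtilde{a}^{1/p}| \le |a-\wtilde a|^{1/p}$ for $a,\wtilde a>0$; the hypothesis $|a-\wtilde a|\le\delta$ then gives the stated bound. The plan is to reduce everything to the subadditivity of the concave function $t\mapsto t^{1/p}$ on $[0,\infty)$.

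By symmetry, assume without loss of generality that $a \ge \wtilde{a}$, and set $s = a - \wtilde{a} \ge 0$, so $s \le \delta$. Then $a = \wtilde{a} + s$, and the claim becomes
\begin{align}
(\wtilde{a} + s)^{1/p} \le \wtilde{a}^{1/p} + s^{1/p}.
\end{align}
This is the subadditivity inequality $(u+v)^{1/p} \le u^{1/p} + v^{1/p}$ for $u,v \ge 0$ and $0 < 1/p \le 1$.

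To prove subadditivity, I would raise both sides to the power $p \ge 1$. The inequality is then equivalent to
\begin{align}
u + v \le \left(u^{1/p} + v^{1/p}\right)^p .
\end{align}
Write $A = u^{1/p}$ and $B = v^{1/p}$, both nonnegative. The target inequality is $A^p + B^p \le (A+B)^p$. This holds because $(A+B)^p = (A+B)^{p-1}A + (A+B)^{p-1}B$, and $(A+B)^{p-1} \ge A^{p-1}$ and $(A+B)^{p-1}\ge B^{p-1}$ since $p - 1 \ge 0$.

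Alternatively, one can argue directly from concavity. The function $F(s) = (\wtilde a + s)^{1/p} - s^{1/p}$ satisfies $F(0) = \wtilde a^{1/p}$. Its derivative $F'(s) = \tfrac1p\big((\wtilde a+s)^{1/p-1} - s^{1/p-1}\big)$ is $\le 0$, because $1/p - 1 \le 0$ and $\wtilde a + s \ge s$. Hence $F(s) \le F(0)$ for all $s \ge 0$.

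Combining the pieces gives $0 \le a^{1/p} - \wtilde a^{1/p} \le s^{1/p} \le \delta^{1/p}$. The last step uses only that $t \mapsto t^{1/p}$ is increasing.

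There is no real obstacle here. The only points to handle carefully are the reduction to the ordered case $a \ge \wtilde a$ and the monotonicity of $t^{1/p}$ in the final step.
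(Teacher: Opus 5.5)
Your proof is correct and takes essentially the same route as the paper: both reduce the claim to the subadditivity $(u+v)^{1/p} \le u^{1/p} + v^{1/p}$. The only cosmetic differences are that the paper applies subadditivity with $\delta$ directly after using $a \le \wtilde{a} + \delta$ and monotonicity, whereas you apply it with $s = |a - \wtilde{a}|$ and invoke monotonicity at the end. You also supply a proof of subadditivity, which the paper takes as known.
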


\begin{proof}[Proof of Lemma \ref{lem:concavity}]
This is immediate from the inequality $(a + b)^{1/p} \le a^{1/p} + b^{1/p}$
for $a,b > 0$. Indeed, we have
$a \le \wtilde{a} + \delta$, and so
$a^{1/p} \le (\wtilde{a} + \delta)^{1/p} \le \wtilde{a}^{1/p} + \delta^{1/p}$,
hence $a^{1/p} - \wtilde{a}^{1/p} \le \delta^{1/p}$;
and similarly $\wtilde{a}^{1/p} - a^{1/p} \le \delta^{1/p}$,
completing the proof.

\end{proof}

\begin{lem}
\label{lem:concavity_v2}
Suppose $a,\wtilde{a} > 0$, and
$|a - \wtilde{a}| < \delta$ for $0 < \delta < a/2$.
Let $p \ge 1$. Then
\begin{align}
\left| a^{1/p} - \wtilde{a}^{1/p} \right| \le (1/p) (a/2)^{1/p-1}  \delta.
\end{align}
\end{lem}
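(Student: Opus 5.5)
The idea is to apply the Mean Value Theorem to $h(x) = x^{1/p}$ on the interval between $a$ and $\wtilde{a}$, exactly as in the proof of Lemma \ref{lem:mean_value_theorem}. The difference from Lemma \ref{lem:concavity} is that the hypothesis $\delta < a/2$ keeps that interval away from $0$, where $h'$ blows up when $p>1$. Away from $0$ the derivative is bounded, which gives a linear bound in $\delta$ instead of the cruder $\delta^{1/p}$.

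First dispose of $p = 1$. Then $|a - \wtilde{a}| < \delta = (1/p)(a/2)^{0}\delta$, and there is nothing to prove. For $p > 1$, the function $h(x) = x^{1/p}$ is differentiable on $(0,\infty)$ with $h'(x) = (1/p)x^{1/p - 1}$. Since $1/p - 1 < 0$, $h'$ is positive and decreasing in $x$.

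Next, the Mean Value Theorem gives some $b$ between $a$ and $\wtilde{a}$ with
\begin{align}
\left| a^{1/p} - \wtilde{a}^{1/p} \right| = (1/p)\, b^{1/p-1} \, |a - \wtilde{a}|.
\end{align}
Since $b \ge \min\{a, \wtilde{a}\} \ge a - \delta > a/2$, monotonicity of $h'$ gives $b^{1/p-1} \le (a/2)^{1/p-1}$. Combining this with $|a - \wtilde{a}| < \delta$ yields the claimed inequality; in fact it holds strictly.

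The only point needing care is the lower bound $b > a/2$. This is where the hypothesis $\delta < a/2$ enters: it forces $\wtilde{a} > a/2$, so the whole segment from $a$ to $\wtilde{a}$ lies in $(a/2, \infty)$. Once that is noted, the argument is immediate.
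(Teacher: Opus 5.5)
Your proposal is correct and follows the paper's proof essentially step for step: show $\wtilde{a} > a/2$ from $\delta < a/2$, apply the Mean Value Theorem to $t \mapsto t^{1/p}$, and bound $b^{1/p-1} \le (a/2)^{1/p-1}$ because the derivative is decreasing. Your separate treatment of $p=1$ is harmless but not needed, since the derivative is constant there and the same monotonicity argument still applies.
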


\begin{proof}
First, note that, since $a - \wtilde{a} < \delta < a/2$,
we have $\wtilde{a} > a/2$.
Let $q(t) = t^{1/p}$, for $t \ge 0$.
Then $q'(t) = (1/p) t^{1/p-1}$, which is decreasing.
By the Mean Value Theorem, there exists $b$ between $a$
and $\wtilde{a}$ such that
\begin{align}
\left| a^{1/p} - \wtilde{a}^{1/p}\right| \le (1/p) b^{1/p-1} |a - \wtilde{a}|
\le (1/p) (a/2)^{1/p-1} \delta,
\end{align}
where we have used $b \ge a/2$.

\end{proof}

\begin{lem}
\label{lem:log_geometric}
The function
\begin{align}
q(\delta) = \frac{\delta}{1 - \delta} \log(1 / \delta)
\end{align}
is increasing on $(0,1)$, converges to $0$ as $\delta \to 0^+$,
and converges to $1$ as $\delta \to 1^-$.
\end{lem}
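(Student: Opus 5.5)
We must show three things about $q(\delta) = \frac{\delta}{1-\delta}\log(1/\delta)$ on $(0,1)$: monotonicity, the limit $0$ at $0^+$, and the limit $1$ at $1^-$. The plan is to handle the two limits directly and then reduce monotonicity, via the substitution $\delta = e^{-s}$, to an elementary inequality.

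\emph{The limits.} As $\delta \to 0^+$ we have $\delta\log(1/\delta) \to 0$ and $1/(1-\delta) \to 1$, so $q(\delta) \to 0$. As $\delta \to 1^-$, write $\log(1/\delta) = -\log\delta$. The ratio $\frac{-\log\delta}{1-\delta}$ is a difference quotient of $\log$ at $1$; by L'H\^opital's rule (or the derivative of $\log$ at $1$) it tends to $1$. Since the factor $\delta$ also tends to $1$, we get $q(\delta) \to 1$.

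\emph{Monotonicity.} Substitute $\delta = e^{-s}$ with $s \in (0,\infty)$. This map is decreasing in $s$, and
\begin{align}
q = \frac{s e^{-s}}{1 - e^{-s}} = \frac{s}{e^{s} - 1}.
\end{align}
So it suffices to show that $u(s) = s/(e^s - 1)$ is decreasing on $(0,\infty)$. Its derivative is
\begin{align}
u'(s) = \frac{(e^s - 1) - s e^s}{(e^s-1)^2},
\end{align}
so we need $k(s) = e^s - 1 - s e^s < 0$ for $s > 0$. This holds because $k(0) = 0$ and $k'(s) = e^s - e^s - s e^s = -s e^s < 0$. Hence $u$ is decreasing in $s$, and therefore $q$ is increasing in $\delta$.

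The substitution also recovers the limits: $u(s) \to 1$ as $s \to 0^+$ and $u(s) \to 0$ as $s \to \infty$. The only step needing any care is the sign of $k$, and it is settled by the derivative computation above; there is no real obstacle.
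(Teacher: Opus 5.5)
Your proof is correct and is essentially the paper's argument in a different parametrization. The paper differentiates $q$ directly, getting $q'(\delta) = (\delta - 1 - \log\delta)/(\delta-1)^2 > 0$ from $\log\delta < \delta - 1$, and treats the limits by l'H\^opital; your condition $e^s - 1 - s e^s < 0$ becomes exactly $\log\delta < \delta - 1$ after dividing by $e^s$ and setting $\delta = e^{-s}$, and you prove this inequality via $k'$ where the paper takes it as known.
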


\begin{proof}[Proof of Lemma \ref{lem:log_geometric}]
The limits are obtained by l'H\^opital's rule. The derivative
of the function is
\begin{align}
q'(\delta) = \frac{\delta-1-\log(\delta)}{(\delta-1)^2},
\end{align}
which is positive over $0 < \delta < 1$
since $\log(\delta) < \delta - 1$.

\end{proof}

\begin{lem}
\label{lem:gaussian_crossover}
If $\tau < \tau_0$, then $G_\tau(x) \le G_{\tau_0}(x)$
whenever
\begin{align}
|x| \ge \sqrt{\tau_0 \tau\frac{\log(\tau_0 / \tau)}{\tau_0 - \tau}}.
\end{align}
Furthermore, the right side decreases as  $\tau$ decreases.
\end{lem}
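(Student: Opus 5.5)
We work directly with the explicit formula $G_\tau(x) = (\pi\tau)^{-d/2} e^{-|x|^2/\tau}$. Since both sides are positive, the inequality $G_\tau(x) \le G_{\tau_0}(x)$ is equivalent to the inequality between their logarithms:
\begin{align}
-\frac{d}{2}\log(\pi\tau) - \frac{|x|^2}{\tau} \le -\frac{d}{2}\log(\pi\tau_0) - \frac{|x|^2}{\tau_0}.
\end{align}
Rearranging gives
\begin{align}
|x|^2\left(\frac{1}{\tau} - \frac{1}{\tau_0}\right) \ge \frac{d}{2}\log(\tau_0/\tau),
\end{align}
and since $\frac{1}{\tau}-\frac{1}{\tau_0} = \frac{\tau_0-\tau}{\tau_0\tau} > 0$ for $\tau<\tau_0$, this is the same as
\begin{align}
|x|^2 \ge \frac{d}{2}\cdot\tau_0\tau\frac{\log(\tau_0/\tau)}{\tau_0-\tau}.
\end{align}
The statement has no factor $d/2$, so it concerns the setting of this section, namely 2D with $d=2$, where $d/2=1$. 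In that case the threshold is exactly the stated one. Every step is a reversible equivalence, so the first claim follows. (For general $d$ one would carry the factor $d/2$ along.)

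For the monotonicity claim, we write $\delta = \tau/\tau_0 \in (0,1)$. Then
\begin{align}
\tau_0\tau\frac{\log(\tau_0/\tau)}{\tau_0-\tau} = \tau_0 \cdot \frac{\delta}{1-\delta}\log(1/\delta) = \tau_0\, q(\delta),
\end{align}
where $q$ is the function from Lemma \ref{lem:log_geometric}. That lemma shows $q$ is increasing on $(0,1)$. Hence, as $\tau$ decreases, $\delta$ decreases, and so the threshold $\sqrt{\tau_0 q(\delta)}$ decreases as well. As a bonus, Lemma \ref{lem:log_geometric} also gives that the threshold is always below $\sqrt{\tau_0}$ and tends to $0$ as $\tau\to 0^+$, which is presumably how the lemma will be used later.

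The only real obstacle is reconciling the dimension factor $d/2$. I would state explicitly that $d=2$ here, consistent with the section's 2D setting. If the paper intends general $d$, the proof is unchanged except that the threshold is multiplied by $\sqrt{d/2}$.
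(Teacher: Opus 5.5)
Your proof is correct and follows the same route as the paper: rearrange $G_\tau(x)\le G_{\tau_0}(x)$ to isolate $|x|^2$, then write the squared threshold as $\tau_0\, q(\tau/\tau_0)$ and invoke Lemma \ref{lem:log_geometric} for monotonicity. Your remark about the factor $d/2$ is also right: the stated threshold is exact precisely for $d=2$, which is the setting in which the paper states and applies the lemma.
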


\begin{proof}[Proof of Lemma \ref{lem:gaussian_crossover}]
The first part is immediate from rearranging the inequality $G_{\tau_0}(x) \ge G_{\tau}(x)$.
The second part follows by rewriting the square of the right side as
\begin{align}
\tau_0 \tau\frac{\log(\tau_0 / \tau)}{\tau_0 - \tau}
= \tau_0 \frac{\tau/\tau_0}{1 - \tau/\tau_0} \log(1 / (\tau/\tau_0))
\end{align}
and applying Lemma \ref{lem:log_geometric} with $\delta = \tau/\tau_0$.

\end{proof}

We break the error analysis into several steps.

\item
\paragraph{Step 1.}
For $\tau < \tau_0$,
we will first bound the error
\begin{align}
\left| \frac{1}{L^2}\sum_{(k_1,k_2) \in \ZZ^2}
        |\what{f}(k_1/L,k_2/L)|^2 \what{G}_\tau(k_1/L,k_2/L)^2
- \|G_\tau \ast f\|_{L^2}^2 \right|.
\end{align}

Let $f_s(x) = f(-x)$ for $x \in \R^2$,
and let $H_\tau = (G_{\tau} \ast f) \ast (G_{\tau} \ast f_s)$.
Note that
\begin{align}
H_{\tau}(0,0) = \| G_\tau \ast f \|_{L^2}^2.
\end{align}
Also note that $H_\tau$ has Fourier transform
\begin{align}
\what{H}_\tau(\xi_1,\xi_2) = |\what{f}(\xi_1,\xi_2)|^2 \what{G}_\tau(\xi_1,\xi_2)^2.
\end{align}

By the Poisson summation formula (e.g.\ Theorem 8.32 in \cite{folland1999real}),
\begin{align}
\frac{1}{L^2} \sum_{k_1 = -\infty}^{\infty} \sum_{k_2 = -\infty}^{\infty}
    |\what{f}(k_1/L,k_2/L)|^2 \what{G}_\tau(k_1/L,k_2/L)^2
&=\frac{1}{L^2} \sum_{k_1 = -\infty}^{\infty} \sum_{k_2 = -\infty}^{\infty}
    \what{H}_\tau(k_1/L,k_2/L)
\nonumber \\
&= \sum_{j_1 = -\infty}^{\infty} \sum_{j_2 = -\infty}^{\infty}H_\tau(j_1 L,j_2 L)
\nonumber \\
&= H_\tau(0,0) + \sum_{(j_1,j_2) \ne (0,0)} H_\tau(j_1 L,j_2 L).
\nonumber \\
&= \|G_\tau \ast f\|_{L^2}^2 + \sum_{(j_1,j_2) \ne (0,0)} H_\tau(j_1 L,j_2 L).
\end{align}

We now need to bound the remainder term.
Let $F = f \ast f_s$, and note that, since $f$
is assumed to be supported in $B(0,R_0)$, $F$
is supported in $B(0,L_0)$, where $L_0 = 2R_0$.
Note too that $G_\tau \ast G_{\tau} = G_{2 \tau}$,
and so
\begin{align}
H_\tau = F \ast G_{2 \tau}.
\end{align}
Therefore,
\begin{align}
\sum_{j \in \ZZ^2 \setminus (0,0)} H_\tau(Lj)
&= \sum_{j \in \ZZ^2 \setminus (0,0)} \int_{B(0,L_0)} F(x) G_{2 \tau}(x - jL) \,dx
\nonumber \\
&\le \int_{B(0,L_0)} |F(x)|  \sum_{j \in \ZZ^2 \setminus (0,0)} G_{2 \tau}(x - jL) \,dx.
\end{align}
We bound the series appearing in the integral uniformly, as follows.
First, since the $\ell_\infty$ ball of radius $L_0$ is
bigger than the $\ell_2$ ball of radius $L_0$,
it is enough to bound
\begin{align}
\sup_{|x|_\infty \le L_0} \sum_{j \in \ZZ^2 \setminus (0,0)} G_{2 \tau}(x - jL).
\end{align}
Suppose $j = (j_1 , j_2) \ne (0,0)$, and without loss of
generality suppose that $j_1 > 0$.
Then the maximum possible value of
$G_{2 \tau}(x - jL)$
for $|x|_\infty \le L_0$
is achieved when $x_1 = L_0$,
in which case
\begin{align}
|jL - x|^2
= |j_1 L - x_1|^2 + |j_2 L - x_2|^2
\ge |j_1 L - L_0|^2.
\end{align}

Now, for $\tau \le \tau_0$, say $\tau = \tau_0 \cdot \delta$
with $0 <\delta < 1$,
by Lemma \ref{lem:gaussian_crossover},
then
$G_{2\tau}(y) \le G_{2 \tau_0}(y)$
when
\begin{align}
|y|^2 &\ge 2\tau_0 \tau\frac{\log(\tau_0 / \tau)}{\tau_0 - \tau}
\nonumber \\
&= 2\tau_0^2 \delta
    \frac{\log(1/\delta)}{\tau_0 - \tau_0 \delta}
\nonumber \\
&= 2\tau_0 \frac{\delta}{1 - \delta}
    \log(1/\delta).
\end{align}
From Lemma \ref{lem:log_geometric},
this last quantity
is bounded above by $2\tau_0$;
hence, so long as $|y| \ge \sqrt{2\tau_0}$,
we have $G_{2\tau}(y) \le G_{2 \tau_0}(y)$.
With $x_1 = L_0$, we have
\begin{align}
|jL - x|^2
\ge |j_1 L - L_0|^2
\ge |L - L_0|^2
= 4 |R - R_0|^2
= 4 \tau_0 \cdot  \log(1/\epsilon)
\ge 4 \tau_0,
\end{align}
it then follows that
for this choice of $x$, $G_{2\tau}(|jL - x|) \le G_{2 \tau_0}(|jL - x|)$.
Hence, it is enough to prove an upper bound for $\tau = \tau_0$.

To that end, we have
\begin{align}
G_{2 \tau_0}(x - jL)
&\le \frac{1}{2 \tau_0}
        e^{-(j_1 L - x_1)^2 / (2 \tau_0)}
\nonumber \\
&\le \frac{1}{2 \tau_0}
        e^{-(j_1 L - L_0)^2 / (2 \tau_0)}
\nonumber \\
&= \frac{1}{2 \tau_0}
        e^{-(j_1 L - j_1 L_0 + (j_1 - 1)L_0)^2 / (2 \tau_0)}
\nonumber \\
&\le \frac{1}{2 \tau_0}
        e^{-(j_1 L - j_1 L_0)^2 / (2 \tau_0)}
\nonumber \\
&= \frac{1}{2 \tau_0}
        e^{-j_1^2 (L - L_0)^2 / (2 \tau_0)}
\nonumber \\
&= \frac{1}{2 \tau_0}
        e^{-j_1^2 4 \tau_0 \log(1/\epsilon) / (2 \tau_0)}
\nonumber \\
&= \frac{1}{2 \tau_0}  \epsilon^{2 j_1^2}.
\end{align}
The same holds, of course, if $j_1 < 0$.
Similarly, if $j_2 \ne 0$,
then
\begin{align}
G_{2 \tau_0}(x - jL)
\le \frac{1}{2 \tau_0}  \epsilon^{2 j_2^2}.
\end{align}
And if both $j_1 \ne 0$ and $j_2 \ne 0$,
then similar reasoning shows that
\begin{align}
G_{2 \tau_0}(x - jL) \le \frac{1}{2 \tau_0} \epsilon^{2 (j_1^2 + j_2^2)}.
\end{align}
Hence
\begin{align}
\sum_{j \in \ZZ^2 \setminus (0,0)} G_{2 \tau}(x - jL)
&\le \frac{1}{2 \tau_0} \sum_{(j_1,j_2) \ne (0,0)} \epsilon^{2 (j_1^2 + j_2^2)}
\nonumber \\
&= \frac{1}{2 \tau_0} \left(
    \sum_{j_1 \ne 0} \epsilon^{2 j_1^2}
        +     \sum_{j_2 \ne 0} \epsilon^{2 j_2^2}
        +    \sum_{j_1 \ne 0,j_2 \ne 0} \epsilon^{2 j_1^2} \epsilon^{2 j_2^2}
\right)
\nonumber \\
&= \frac{1}{\tau_0}\sum_{m \ne 0} \epsilon^{2 m^2}
        + \frac{1}{2\tau_0}\left(\sum_{m \ne 0} \epsilon^{2 m^2} \right)^2
\nonumber \\
&\le \frac{C}{\tau_0} \epsilon^2,
\end{align}
for a universal constant $C>0$.

Therefore,
\begin{align}
\sum_{j \in \ZZ^2 \setminus (0,0)} H_\tau(Lj)
&\le \int_{B(0,L_0)} |F(x)|  \sum_{j \in \ZZ^2 \setminus (0,0)} G_{2 \tau}(x - jL) \,dx
\nonumber \\
&\le \frac{C}{\tau_0} \|F\|_{L_1} \epsilon^2
\nonumber \\
&\le \frac{C}{\tau_0} \|f\|_{L_1}^2 \epsilon^2,
\end{align}
where in the last inequality we have used that
$\|F\|_{L^1} = \|f \ast f_s\|_{L^1} \le \|f\|_{L^1} \|f_s\|_{L^1} = \|f\|_{L^1}^2$.
We have therefore shown that
\begin{align}
\left| \frac{1}{L^2} \sum_{k_1 = -\infty}^{\infty} \sum_{k_2 = -\infty}^{\infty}
    |\what{f}(k_1/L,k_2/L)|^2 \what{G}_\tau(k_1/L,k_2/L)^2
    - \|G_\tau \ast f\|_{L^2}^2 \right| \le \frac{C}{\tau_0} \|f\|_{L_1}^2 \epsilon^2.
\end{align}

\item
\paragraph{Step 2.}
We now bound the error from truncating the infinite sum.
Because $f$ is $C^\infty$, for any $J>0$, $|\what{f}(\xi)| = O(|\xi|^{-(J+2)})$,
and so $|\what{f}(k/L)|^2 = O(L^{2J+2}/|k|^{2J+2})$.
Furthermore, $\|\what{G}_{\tau}\|_{\infty} \le 1$ for all $\tau$.
Note too that for any $m \ge 1$, there are $8m$ $k \in \ZZ^2$
with $|k|_\infty = m$.
It then follows that the remainder term
can be bounded:
\begin{align}
&\left| \frac{1}{L^2}\sum_{k \in \ZZ^2}
            |\what{f}(k_1/L,k_2/L)|^2 \what{G}_\tau(k_1/L,k_2/L)^2
    -\frac{1}{L^2}\sum_{|k|_\infty \le n/2}
            |\what{f}(k_1/L,k_2/L)|^2 \what{G}_\tau(k_1/L,k_2/L)^2 \right|
\nonumber \\
= \, & \frac{1}{L^2}\sum_{|k|_\infty > n/2} |\what{f}(k_1/L,k_2/L)|^2 \what{G}_\tau(k_1/L,k_2/L)^2.
\nonumber \\
\le \, & C \cdot L^{2J}\sum_{|k|_\infty > n/2} \frac{1}{|k|^{2J+2}}
\nonumber \\
= \, & C \cdot L^{2J}\sum_{m=n/2+1}^{\infty}
        \sum_{|k|_\infty = m} \frac{1}{m^{2J+2}}
\nonumber \\
= \, & C \cdot L^{2J}\sum_{m=n/2+1}^{\infty} \frac{8}{m^{2J+1}}
\nonumber \\
\le \, & C \cdot L^{2J} \int_{n/2}^{\infty} \frac{1}{r^{2J+1}} \,dr
\nonumber \\
= \, & C \cdot  \frac{L^{2J}}{2J} \cdot \frac{1}{(n/2)^{2J}}
\nonumber \\
= \, & C \cdot  \frac{(2L)^{2J} }{J} \cdot \frac{1}{n^{2J}},
\end{align}
i.e. we have shown the bound
\begin{align}
\left| \frac{1}{L^2}\sum_{k \in \ZZ^2}
            |\what{f}(k_1/L,k_2/L)|^2 \what{G}_\tau(k_1/L,k_2/L)^2
    -\frac{1}{L^2}\sum_{|k|_\infty \le n/2}
            |\what{f}(k_1/L,k_2/L)|^2 \what{G}_\tau(k_1/L,k_2/L)^2 \right|
= O\left( n^{-2J} \right).
\end{align}

\item
\paragraph{Step 3.}
Finally, we account for the error
in approximating each $\what{f}(k_1/L,k_2/L)$
by $\what{x}[k_1,k_2]$.
Because $f$ is $C^\infty$,
a standard error bound (see e.g.\ \cite{leeb2025sliced}) says that
for $\max\{|k_1|,|k_2| \} \le n / (2L)$,
\begin{align}
|\what{x}[k_1,k_2] - \what{f}(k_1/L,k_2/L)| = O(n^{-(2J+2)}),
\end{align}
and so from Corollary \ref{cor:mvt_complex},
\begin{align}
\left| |\what{x}[k_1,k_2]|^2 - |\what{f}(k_1/L,k_2/L)|^2 \right|
    = O \left( n^{-(2J+2)} \right).
\end{align}
Consequently, since $\what{G}_\tau(k_1/L,k_2/L) \le 1$,
summing the errors over all $n^2$ terms gives the error bound
\begin{align}
\left| \sum_{k_1 = -n/2}^{n/2-1} \sum_{k_2 = -n/2}^{n/2-1}
    \left(|\what{x}[k_1,k_2]|^2 \what{G}_\tau(k_1/L,k_2/L)^2
            - |\what{f}(k_1/L,k_2/L)|^2 \what{G}_\tau(k_1/L,k_2/L)^2 \right) \right|
= O\left( n^{-2J} \right).
\end{align}

\item
\paragraph{Step 4.}
Putting all the bounds together,
\begin{align}
\left| \|G_\tau \ast f\|_{L^2}^2
         - \frac{1}{L^2}\sum_{k_1 = -n/2}^{n/2-1} \sum_{k_2 = -n/2}^{n/2-1}
                |\what{x}[k_1,k_2]|^2 \what{G}_\tau(k_1/L,k_2/L)^2 \right|
= O(n^{-2J} + \epsilon^2)
= O(n^{-2J}).
\end{align}

Taking square roots and employing Lemma \ref{lem:concavity},
\begin{align}
\left| \|G_\tau \ast f\|_{L^2}
         - \left(\frac{1}{L^2}\sum_{k_1 = -n/2}^{n/2-1} \sum_{k_2 = -n/2}^{n/2-1}
                |\what{x}[k_1,k_2]|^2 \what{G}_\tau(k_1/L,k_2/L)^2\right)^{1/2} \right|
= O(n^{-J}),
\end{align}
i.e.
\begin{align}
\left| \|G_\tau \ast f\|_{L^2} - S(\tau_\ell)\right| = O\left( n^{-J} \right).
\end{align}

Letting $\delta$ denote the $O(n^{-J})$ error,
for $n$ sufficiently big, $\delta \le \|f\|_{L^2}$, and so
by Lemma \ref{lem:mean_value_theorem},
\begin{align}
\left| \|G_\tau \ast f\|_{L^2}^p - S(\tau)^p \right|
\le C \cdot p \cdot (\|G_\tau \ast f\|_{L^2} + \delta)^{p-1} \cdot \delta
\le C \cdot p \cdot (2\|f\|_{L^2})^{p-1} \cdot n^{-J}
= O\left( n^{-J} \right).
\end{align}
Importantly, the constant hidden in the ``$O(n^{-J})$''
does not depend on $\tau$.

\item
\paragraph{Step 5.}
We now bound the error for the entire series defining the metric.
As we have shown, there is a $C>0$ such that for all $\tau$,
\begin{align}
\left| \|G_\tau \ast f\|_{L^2}^p - S(\tau)^p \right|
\le C \cdot n^{-J}.
\end{align}

Recall that $M_n = \lfloor (2J/p\alpha) \log_{1/\omega}(n) \rfloor$.
Therefore,
\begin{align}
\left| \sum_{k=0}^{M_n} \tau_k^{p \alpha / 2}
        (S(\tau_\ell)^p - \|G_{\tau_\ell} \ast f\|_{L^2}^p) \right|
&\le C \cdot n^{-J} \cdot \tau_0^{p \alpha/2} \sum_{k=0}^{M_n} \omega^{k p \alpha / 2}
\nonumber \\
&\le C \cdot n^{-J} \cdot \tau_0^{p \alpha/2} \frac{1}{1 - \omega^{p \alpha / 2}}.
\end{align}

Regarding the remainder,
\begin{align}
\sum_{k=M_n+1}^{\infty} \tau_k^{p \alpha / 2} \|G_{\tau_\ell} \ast f\|_{L^2}^p
&\le \|f\|_{L^2}^p \sum_{k=M_n+1}^{\infty} \tau_k^{p \alpha / 2}
\nonumber \\
&\le \|f\|_{L^2}^p \tau_0^{p\alpha/2}\sum_{k=M_n+1}^{\infty} \omega^{k p \alpha / 2}
\nonumber \\
&= \|f\|_{L^2}^p \tau_0^{p\alpha/2} \frac{\omega^{(M_n+1) p \alpha / 2}}{1 - \omega^{p \alpha / 2}}.
\end{align}

So, since $M_n = \lfloor (2J/p\alpha) \log_{1/\omega}(n) \rfloor$,
therefore
\begin{align}
\omega^{M_n+1} \le n^{-2J/p\alpha},
\end{align}
and the bound becomes
\begin{align}
\sum_{k=M_n+1}^{\infty} \tau_k^{p \alpha / 2} \|G_{\tau_\ell} \ast f\|_{L^2}^p
\le \|f\|_{L^2}^p \tau_0^{p\alpha/2} \frac{n^{-J}}{1 - \omega^{p \alpha / 2}}
\end{align}

Putting the bounds together,
\begin{align}
\left| \|x\|_{\gamma_p^\alpha}^p - \|f\|_{\Gamma_p^\alpha}^p \right|
= O\left( n^{-J}\right).
\end{align}
Assume $\|f\|_{\Gamma_p^\alpha} > 0$
(otherwise the result is trivial,
since $f \equiv 0$ and hence $x \equiv 0$). For $n$ sufficiently big,
$\|x\|_{\gamma_p^\alpha}^p \ge \|f\|_{\Gamma_p^\alpha}^p / 2$;
consequently,
by Lemma \ref{lem:concavity_v2},
\begin{align}
\left| \|x\|_{\gamma_p^\alpha} - \|f\|_{\Gamma_p^\alpha} \right|
= O\left( n^{-J}\right),
\end{align}
concluding the proof.

\end{proof}

\subsection{Discretization in 2D, with in-plane rotations}
\label{sec:discretization_rotations}

Next we consider the discretization of the
minimum distance $\D_{\alpha,p}(g,h; \SO(2))$.
As before, we suppose access to samples $g(t_{j_1},t_{j_2})$
and $h(t_{j_1},t_{j_2})$ of $g$ and $h$
on a Cartesian grid.

By abuse of notation, we denote
by $\what{G}_{\tau}(r)$
the Fourier transform of the Gaussian $G_\tau$
at any $(\xi_1,\xi_2)$ with $r = \sqrt{\xi_1^2 + \xi_2^2}$.
Also abusing notation, denote by $\what{g}(r,\theta)$
and $\what{h}(r,\theta)$ the functions $\what{g}$
and $\what{h}$ in polar coordinates.

For $0 < \tau \le \tau_0$,
define the inner product function $Q_\tau$ by
\begin{align}
Q_\tau(\psi; g,h) = \int_{0}^{2 \pi} \int_{0}^{\infty}  \what{g}(r,\theta)
            \what{h}(r,\theta - \psi)^* \, \what{G}_{\tau}(r)^2
            \, r  dr \, d\theta,
\end{align}
where here ${}^*$ denotes complex conjugation.
Then $Q_\tau(\psi; g, h)$ is the inner product
between $G_\tau \ast g$
and the rotation of $G_\tau \ast h$ by $\psi$.

We introduce several key definitions and observations,
which we will be able to convert into
an efficient algorithm:

\begin{enumerate}

\item
For $r > 0$, write the Fourier expansions
\begin{align}
\what{g}(r,\theta) = \sum_{j} a_j(r) e^{\i j \theta},
\end{align}
and
\begin{align}
\what{h}(r,\theta) = \sum_{j} b_j(r) e^{\i j \theta},
\end{align}
for $0 \le \theta < 2 \pi$.

\item
For each fixed $r$ and $\psi$,
\begin{align}
\int_{0}^{2 \pi} \what{g}(r,\theta)
            \what{h}(r,\theta - \psi)^* \,  d\theta
= \sum_{j} a_j(r) b_j(r)^* e^{\i j \psi}.
\end{align}

\item
The Fourier expansion of $Q_{\tau}$
may then be written as follows:
\begin{align}
Q_\tau(\psi; g,h) &=  \int_{0}^{\infty} \int_{0}^{2 \pi} \what{g}(r,\theta)
            \what{h}(r,\theta - \psi)^* \, \what{G}_{\tau}(r)^2
            \,  d\theta \, r  dr
\nonumber \\
&= \int_{0}^{\infty} \left( \sum_{j} a_j(r) b_j(r)^* e^{\i j \psi}\right)
        \what{G}_{\tau}(r)^2 \, r dr
\nonumber \\
&= \sum_{j} \left(\int_{0}^{\infty} a_j(r) b_j(r)^*
        \what{G}_{\tau}(r)^2 \, r  dr \right) e^{\i j \psi}.
\end{align}
That is, $Q_\tau$ has Fourier coefficients
\begin{align}
\what{Q}_\tau[j] = \int_{0}^{\infty} a_j(r) b_j(r)^*
        \what{G}_{\tau}(r)^2 \, r  dr.
\end{align}

\item
Let $\D_{\alpha,p}(g,h; \, \psi)$
denote the distance between $g$ and the rotation of $h$
by $\psi$.
Then
\begin{align}
\D_{\alpha,p}(g,h; \, \psi)^p
= \sum_{k=0}^{\infty} \tau_k^{p \alpha / 2}
    \left( \|G_{\tau_\ell} \ast g\|_{L^2}^2 + \|G_{\tau_\ell} \ast h\|_{L^2}^2
        - 2 Q_{\tau_\ell}(\psi; g, h) \right)^{p/2}.
\end{align}

\end{enumerate}

We now describe the algorithm.
Let $R = n / (2L)$; this is the maximum frequency
resolvable from the observed samples.
Take integers $N_r$, $N_\theta$, and $N_\psi$,
where $N_\psi \ge N_\theta$.
Let $r_k = R \cdot k  / N_r$, $k=0,\dots,N_r$,
be the radial values; let $\theta_\ell = 2 \pi \ell / N_\theta$,
$\ell = 0,\dots, N_\theta - 1$, be the interpolation angles,
which we assume without loss of generality to be even;
and let $\psi_m = 2 \pi m / N_\psi$, $m=0,\dots,N_\psi-1$,
be the angles at which we will approximate the $Q_{\tau_\ell}(\psi_m)$.
Following the discussion in Section 2.2 of \cite{barnett2017rapid},
we take $N_\theta = O(n)$, and $N_r = O(n)$.
We can take $N_\psi$
substantially larger than $N_\theta$
without incurring additional asymptotic cost
(though the constant obviously grows);
see Remark \ref{rmk:finer_grid} below.

\begin{enumerate}

\item
Using a non-uniform fast Fourier transform, or NUFFT \cite{barnett2021aliasing,
barnett2019parallel, dutt1993fast, dutt1995fast, greengard2004accelerating,
fessler2003nonuniform},
evaluate all sums
\begin{align}
\wtilde{g}[k,\ell]
= \frac{L^2}{n^2}\sum_{j_1 = 1}^{n} \sum_{j_2 = 1}^{n}
    g(t_{j_1},t_{j_2}) e^{- 2 \pi \i (t_{j_1} r_k \cos(\theta_\ell) + t_{j_2} r_k \sin(\theta_\ell))}
\end{align}
at cost $O(n^2 \log(n))$.
These values approximate $\what{g}(r_k, \theta_\ell)$,
the 2D Fourier transform of $g$ at the points $(r_k \cos(\theta_\ell), r_k \sin(\theta_\ell))$.
We do the same for $h$, with values $\wtilde{h}[k,\ell] \approx \what{h}(r_k,\theta_\ell)$.

\begin{rmk}
In our implementation, we use the FINUFFT from the Flatiron Institute
\cite{barnett2021aliasing, barnett2019parallel}.
\end{rmk}

\item
For each $k=0,\dots,N_r$,
evaluate the discrete Fourier coefficients $\wtilde{a}[k,j]$
of $\wtilde{g}[k,\cdot]$, and the discrete Fourier coefficients
$\wtilde{b}[k,j]$ of $\wtilde{h}[k,\cdot]$,
$j = -N_\theta/2,\dots,N_\theta/2$.
Then $\wtilde{a}[k,j] \approx a_j(r_k)$
and $\wtilde{b}[k,j] \approx b_j(r_k)$,
for $k=0,\dots,N_r$ and
$j=-N_\theta/2,\dots,N_\theta/2$.
Using an FFT, for each $k$ the cost of
computing all coefficients $\wtilde{a}[k,j]$
and $\wtilde{b}[k,j]$, $|j| \le N_\theta/2$,
is $O(N_\theta \log(N_\theta)) = O(n \log(n))$,
and so the total cost over all $N_r = O(n)$
radii $r_k$ is $O(n^2 \log(n))$.

\item
For each $\ell$
and $j$
evaluate the approximate integral
\begin{align}
\wtilde{Q}[j,\ell]
= \frac{R}{N_r}\sum_{k=0}^{N_r} \wtilde{a}[k,j] \wtilde{b}[k,j]^* \what{G}_{\tau_\ell}(r_k)^2 r_k
\approx \what{Q}_{\tau_\ell}[j].
\end{align}
For each $j$, the  sum may be evaluated at cost $O(N_r) = O(n)$;
and since there are $N_\theta = O(n)$ Fourier coefficients $j$
and $M_n = O(\log(n))$ levels $\ell$,
the total cost is $O(n^2 \log(n))$.

Having computed approximations to the Fourier
coefficients $\what{Q}_{\tau_\ell}[j]$,
for each $\ell = 0,\dots,M_n$,
we can approximate the inner products $Q_{\tau_\ell}(\psi_m; g, h)$,
$m=0,\dots,N_\psi-1$,
at cost $O(N_\psi \log(N_\psi))$,
by first zero-padding the Fourier coefficients
$\what{Q}_{\tau_\ell}[j]$ and then computing an IFFT.
Repeating this computation for all $M_n$ levels
incurs a total cost of $O(N_\psi \log(N_\psi) \cdot M_n)$.

\begin{rmk}
\label{rmk:finer_grid}
Because the cost of this step is
$O(N_\psi \log(N_\psi) \cdot M_n)$
and $M_n = O(\log(n))$,
we can take $N_\psi$
to be substantially larger than $n$
(nearly on the order of $O(n^2)$)
without increasing the asymptotic cost $O(n^2 \log(n))$ of the algorithm,
allowing for a finer resolution alignment.
\end{rmk}

\item
All $M_n$ norms $\|G_{\tau_\ell} \ast g\|_{L^2}$
and $\|G_{\tau_\ell} \ast h\|_{L^2}$,
$\ell = 0,\dots,M$, can be approximated at cost $O(n^2 \log(n))$:
first compute the 2D FFTs of $g$ and $h$
at cost $O(n^2 \log(n))$,
and then, at each level $\ell = 0,\dots,M_n$,
multiply the DFTs of $g$ and $h$ by the Fourier
coefficients of $G_{\tau_\ell}$
at total cost $O(n^2 \log(n))$;
and sum the coefficients for each level
at cost $O(n^2 \log(n))$.
Alternatively, the norms can be computed
using the samples of
$\what{g}$ and $\what{h}$ already computed on the polar grid.

We can then approximate all $N_{\psi}$ rotated distances
by using the formula
\begin{align}
\D_{\alpha,p}(g,h; \, \psi_m)^p
= \sum_{k=0}^{M_n} \tau_k^{p \alpha / 2}
    \left( \|G_{\tau_\ell} \ast g\|_{L^2}^2 + \|G_{\tau_\ell} \ast h\|_{L^2}^2
        - 2 Q_{\tau_\ell}(\psi_m; g, h) \right)^{p/2},
\end{align}
at an additional cost of $O(N_\psi \log(n))$.
The entire cost of the calculation of
all rotated distances, therefore, is $O(n^2 \log(n))$.

\end{enumerate}

\section{Robustness to noise in 2D}

Suppose the random variables $Z[j_1,j_2]$, $0 \le j_1,j_2 \le n-1$
are independent, zero mean sub-Gaussians with respective
sub-Gaussian norms $\sigma[j_1,j_2] = \|Z[j_1,j_2]\|_{\psi_2}$;
and suppose
\begin{align}
\frac{1}{n} \sum_{j=0}^{n-1} \sigma[j_1,j_2]^2 \le \sigma^2
\end{align}
for all $n$, where $\sigma > 0$.
Define the random variable
\begin{align}
\Pi_n \equiv \|Z\|_{\gamma_p^{\alpha}}
= \left( \sum_{\ell=0}^{M} \tau_\ell^{p \alpha/2} S(\tau_\ell)^p \right)^{1/p}.
\end{align}
We will bound the sub-Gaussian norm of $\Pi_n$:

\begin{thm}
\label{thm:main_subgaussian}
The random variable $\Pi_n = \|Z\|_{\gamma_p^{\alpha}}$ has sub-Gaussian
norm bounded as follows:
\begin{align}
\|\Pi_n\|_{\psi_2} \le C \cdot \frac{\tau_0^{\alpha}}{1 - \omega^{1/2}}
    \cdot L \cdot \sqrt{p} \cdot \sigma
    \cdot \Delta_{\alpha,p}(L / n \sqrt{\tau_0}),
\end{align}
where $C>0$ is a universal constant.
\end{thm}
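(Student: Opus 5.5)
The plan is to bound each layer $S(\tau_\ell)$ separately as a sub-Gaussian variable, and then sum over layers with the geometric weights. The layer bound takes the form $\|S(\tau)\|_{\psi_2} \lesssim L\sigma\min\{1, L/(n\sqrt{\tau})\}$. The final summation step mirrors the case analysis in the proof of Theorem \ref{thm:main_projections}, with $\varepsilon$ replaced by the grid spacing $L/n$.

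\textbf{Step 1 (one layer).} By Parseval for the normalized DFT, $S(\tau)^2 = \frac{1}{L^2}\sum_k |\what{Z}[k]|^2 \what{G}_\tau(k/L)^2$ equals $\frac{L^2}{n^2}\|A_\tau Z\|_2^2$. Here $A_\tau$ is the discrete (circulant) Gaussian filter with multipliers $\what{G}_\tau(k/L)\le 1$. Thus $S(\tau) = \frac{L}{n}\|A_\tau Z\|_2$ is a Lipschitz (indeed convex) function of the independent sub-Gaussian vector $Z$. I would then use a Hanson--Wright type bound: $\|A_\tau Z\|_2$ concentrates around $(\mathbb{E}\|A_\tau Z\|_2^2)^{1/2}$ with fluctuations of $\psi_2$-norm $\lesssim \max_j\sigma_j\|A_\tau\|_{op}$. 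If only the averaged hypothesis on the $\sigma$'s is available, I would instead bound the mean and use $\|X\|_{\psi_2}\lesssim \mathbb{E}X + $ fluctuation. The mean satisfies
\[
\mathbb{E}\|A_\tau Z\|_2^2 \le \sum_{j}\sigma_j^2 \cdot \frac{1}{n^2}\sum_k \what{G}_\tau(k/L)^2 \le n^2\sigma^2 \min\Big\{1, C\frac{L^2}{n^2\tau}\Big\},
\]
since $\frac{1}{n^2}\sum_{|k|_\infty\le n/2} e^{-2\pi^2\tau|k|^2/L^2}\lesssim \min\{1, L^2/(n^2\tau)\}$ by comparison with a Gaussian integral. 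Multiplying by $L/n$ gives $\|S(\tau)\|_{\psi_2}\le C L\sigma\min\{1, L/(n\sqrt{\tau})\}$.

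\textbf{Step 2 (summing layers).} I would use the triangle inequality in the Banach space of random variables with norm $\|\cdot\|_{\psi_2}$. The key fact is that the $\ell^p$-sum of sub-Gaussians is controlled: $\|(\sum_\ell w_\ell^p S_\ell^p)^{1/p}\|_{\psi_2}\le \sqrt{p}\,(\sum_\ell w_\ell^p\|S_\ell\|_{\psi_2}^p)^{1/p}$ up to constants. To see this, compare moments: $\|X\|_{L^q}\lesssim\sqrt{q}\|X\|_{\psi_2}$, and apply Minkowski in $L^{q/p}$ for $q\ge p$. This is where the factor $\sqrt{p}$ appears.

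It then remains to bound $\sum_\ell \tau_\ell^{p\alpha/2}\min\{1,\varepsilon/\sqrt{\tau_\ell}\}^p$ with $\varepsilon=L/n$. This is exactly the computation in the proof of Theorem \ref{thm:main_projections}: split at $M=\lfloor\log_{1/\omega}(\tau_0/\varepsilon^2)\rfloor$ and treat the three regimes $\alpha<1$, $\alpha=1$, $\alpha>1$ with Lemmas \ref{lem:dumb_bound} and \ref{lem:dumb_bound-2}. The result is $\frac{\tau_0^{\alpha/2}}{1-\omega^{1/2}}\Delta_{\alpha,p}(\varepsilon/\sqrt{\tau_0})$. The stated $\tau_0^\alpha$ presumably absorbs normalization, or is a weaker form when $\tau_0\ge1$; I would aim for $\tau_0^{\alpha/2}$.

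\textbf{Main obstacle.} The main obstacle is Step 1: obtaining a genuinely sub-Gaussian, dimension-free bound for $\|A_\tau Z\|_2$ under only averaged variance control. The first thing I would try is to write $S(\tau)\le$ (mean) $+$ (Lipschitz fluctuation). For the fluctuation, I would use Hanson--Wright, or, for general sub-Gaussians without convexity concentration, the moment bound $\mathbb{E}\|A_\tau Z\|_2^{q}$ via Khintchine/Rosenthal. The averaged variance hypothesis controls the trace term, and $\|A_\tau\|_{op}\le1$ controls the operator term.
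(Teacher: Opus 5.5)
Your Steps 2 and 3 match the paper. The paper also uses the $\ell^p$-of-sub-Gaussians lemma via the moment characterization of $\|\cdot\|_{\psi_2}$, with Minkowski in $L^{q/p}$ for $q\ge p$. You still need the range $q<p$, which the paper handles by Jensen, and that is where the $\sqrt{p}$ actually comes from. It then uses the same three-regime summation split at $N=\lfloor 2\log_{1/\omega}(n\sqrt{\tau_0}/L)\rfloor$, which is your $M$ with $\varepsilon=L/n$. The paper's proof indeed ends with $\tau_0^{\alpha/2}$, as you anticipated.

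The gap is Step 1, which you leave open, and the resolution you sketch fails under the paper's hypothesis. The operator term in Hanson--Wright is really $\|A_\tau\Sigma\|_{\mathrm{op}}$ with $\Sigma=\mathrm{diag}(\sigma_j)$. Bounding it by $\max_j\sigma_j\,\|A_\tau\|_{\mathrm{op}}$ is not controlled by the averaged assumption $\frac{1}{n^2}\sum_j\sigma_j^2\le\sigma^2$. A single pixel may carry $\sigma_{j_0}\approx n\sigma$, and then your fluctuation bound contributes $\frac{L}{n}\max_j\sigma_j\approx L\sigma$ to $\|S(\tau)\|_{\psi_2}$ at every scale. That wipes out the decaying bound $L^2\sigma/(n\sqrt{\tau})$ on which all three summation cases depend, leaving no decay in $n$. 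So the claim that $\|A_\tau\|_{\mathrm{op}}\le 1$ controls the operator term is false in this setting. Separately, the unit-variance form of concentration for $\|BX\|_2$ does not apply directly to heterogeneous sub-Gaussians, whose variances can be much smaller than their squared $\psi_2$ norms.

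The missing idea is that an upper bound on $\|S(\tau)\|_{\psi_2}$ needs no concentration around the mean at all. The paper writes $\|S(\tau)\|_{\psi_2}^2=\|S(\tau)^2\|_{\psi_1}$ and applies the triangle inequality for the Orlicz norm $\|\cdot\|_{\psi_1}$ over the Fourier coefficients: $\|S(\tau)^2\|_{\psi_1}\le \frac{1}{L^2}\sum_k\|\what{Z}_\tau[k]\|_{\psi_2}^2$. Each coefficient is bounded by the Hoeffding-type estimate $\|\what{Z}[k]\|_{\psi_2}^2\le C\frac{L^4}{n^4}\sum_j\sigma_j^2\le CL^4\sigma^2/n^2$ for weighted sums of independent sub-Gaussians, which sees only $\sum_j\sigma_j^2$. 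This gives $\|S(\tau)\|_{\psi_2}^2\le C\sigma^2\frac{L^2}{n^2}\sum_k\what{G}_\tau(k/L)^2$, which is exactly your mean bound, now upgraded to a $\psi_2$ bound. The Gaussian-sum estimate, using $\sqrt{\tau}\le\sqrt{\tau_0}\le L$, then gives both regimes.

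In your notation this is just $\|\,\|A_\tau Z\|_2\|_{\psi_2}^2\le\sum_i\|(A_\tau Z)_i\|_{\psi_2}^2\lesssim\|A_\tau\Sigma\|_F^2$. Alternatively, you can repair your Hanson--Wright route by using $\|A_\tau\Sigma\|_{\mathrm{op}}\le\|A_\tau\Sigma\|_F$. The averaged hypothesis does control this, because every column of the circulant $A_\tau$ has squared norm $\frac{1}{n^2}\sum_k\what{G}_\tau(k/L)^2$. The direct bound, however, makes Hanson--Wright unnecessary.
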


\begin{rmk}
In particular, when $\alpha > 1$,
the sub-Gaussian norm decays like $1/n$,
which is the inverse square root decay
($n$ being the square root of the total number of
observations) we would expect from
averaging independent random variables.
However, when $0 < \alpha \le 1$, the rate of decay
is worse:
at $\alpha = 1$, the decay is like $(1 + \log(n))^{1/p} / n$,
whereas when $0 < \alpha < 1$, it decays
at the even slower rate of $1 / n^\alpha$.
This is because with smaller values,
the weights $\tau_k^{p \alpha / 2}$ decay slower,
and hence place relatively more weight on the
higher-level Gaussian filters,
which do not filter the noise as aggressively.

\end{rmk}

\begin{rmk}
By Proposition 2.6.6 of \cite{vershynin2025high},
we have the concentration inequality
\begin{align}
P(\|Z\|_{\gamma_{p}^{\alpha}} > t)
\le 2 e^{- t^2 / \|\Pi_n\|_{\psi_2}^2}.
\end{align}
It follows immediately
(using Borel-Cantelli) that $\|Z\|_{\gamma_{p}^{\alpha}}$ converges to $0$
almost surely as $n \to \infty$.
Furthermore, we also have the bound on the expected value of $\|Z\|_{\gamma_{p}^{\alpha}}$:
\begin{align}
\EE\left[ \|Z\|_{\gamma_{p}^{\alpha}} \right] \le
C \cdot \frac{\tau_0^{\alpha}}{1 - \omega^{1/2}}
    \cdot L \cdot \sqrt{p} \cdot \sigma
    \cdot \Delta_{\alpha,p}(L / n \sqrt{\tau_0}),
\end{align}
which also follows immediately from
Proposition 2.6.6 of \cite{vershynin2025high},
with a possibly different universal constant $C$.
\end{rmk}

We now prove Theorem \ref{thm:main_subgaussian}.

\begin{proof}[Proof of Theorem \ref{thm:main_subgaussian}]
We have
\begin{align}
\what{Z}[k_1,k_2]
    = \frac{L^2}{n^2} \sum_{j_1=0}^{n-1} \sum_{j_2=0}^{n-1}Z[j_1,j_2]
        e^{-2 \pi \i (k_1 t_{j_1}  + k_2 t_{j_2} )/L},
\end{align}
The $Z[j_1,j_2]$ are independent sub-Gaussians with mean zero,
and by Proposition 2.7.1 in \cite{vershynin2025high} there is a universal $C>0$ such that
\begin{align}
\left\| \what{Z}[k_1,k_2]  \right\|_{\psi_2}^2
\le C \frac{L^4}{n^4} \sum_{j_1=0}^{n-1} \sum_{j_2=0}^{n-1} \| Z[j_1,j_2] \|_{\psi_2}^2
= C \frac{L^4}{n^2} \left(\frac{1}{n^2} \sum_{j_1=0}^{n-1} \sum_{j_2=0}^{n-1} \sigma_j^2 \right)
\le C L^4 \frac{\sigma^2}{n^2}.
\end{align}
(Note that while Proposition 2.7.1 in \cite{vershynin2025high}
is stated for real-valued random variables, it is not hard
to extend to complex-valued random variables.)

Therefore, the sub-Gaussian norm of
\begin{align}
\what{Z}_\tau[k_1,k_2] = \what{Z}[k_1,k_2] \what{G}_\tau(k_1/L,k_2/L)
\end{align}
is similarly bounded above:
\begin{align}
\left\| \what{Z}_\tau[k_1,k_2]  \right\|_{\psi_2}^2
= C L^4 \frac{\sigma^2}{n^2} \what{G}_\tau(k_1/L,k_2/L)^2.
\end{align}
For $\tau \le \tau_0$,
we now bound the sub-Gaussian norm of the random variable
\begin{align}
S(\tau)^2
= \frac{1}{L^2}\sum_{k_1=-n/2}^{n/2-1} \sum_{k_2=-n/2}^{n/2-1} | \what{Z}_{\tau}[k_1,k_2] |^2.
\end{align}

Since $\|S(\tau)^2\|_{\psi_1} = \|S(\tau)\|_{\psi_2}^2$,
and since $\|\cdot \|_{\psi_1}$ is a norm,
therefore,
\begin{align}
\label{eq:S_tau_bound-1}
\|S(\tau)\|_{\psi_2}^2
&= \|S(\tau)^2\|_{\psi_1}
\nonumber \\
&\le \frac{1}{L^2}\sum_{k_1=-n/2}^{n/2-1}
    \sum_{k_2=-n/2}^{n/2-1} \left\| | \what{Z}_{\tau}[k_1,k_2] |^2 \right\|_{\psi_1}
\nonumber \\
&= \frac{1}{L^2}\sum_{k_1=-n/2}^{n/2-1}
    \sum_{k_2=-n/2}^{n/2-1} \left\| | \what{Z}_{\tau}[k_1,k_2] | \right\|_{\psi_2}^2
\nonumber \\
&\le \frac{1}{L^2}\sum_{k_1=-n/2}^{n/2-1}
    \sum_{k_2=-n/2}^{n/2-1} C L^4 \frac{\sigma^2}{n^2} \what{G}_\tau(k_1/L,k_2/L)^2
\nonumber \\
&= C \sigma^2 \frac{L^2}{n^2}\sum_{k_1=-n/2}^{n/2-1}
    \sum_{k_2=-n/2}^{n/2-1} \what{G}_\tau(k_1/L,k_2/L)^2.
\end{align}

To bound this sum, we factor
\begin{align}
\what{G}_\tau(k_1/L,k_2/L)^2
= e^{- 2 \pi^2 \tau (k_1/L)^2} e^{- 2\pi^2 \tau (k_2/L)^2},
\end{align}
which allows us to write the bound in \eqref{eq:S_tau_bound-1}
as
\begin{align}
\|S(\tau)\|_{\psi_2}^2
\le C L^4\frac{\sigma^2}{n^2}
    \left(\frac{1}{L}\sum_{k=-n/2}^{n/2-1} e^{- 2 \pi^2 \tau (k/L)^2}\right)^2.
\end{align}

To bound this: we write
\begin{align}
\frac{1}{L}\sum_{k=-n/2}^{n/2-1} e^{-2 \pi^2 \tau (k/L)^2}
&\le \frac{1}{L} + \frac{2}{L} \sum_{k=1}^{n/2} e^{-2 \pi^2 \tau (k/L)^2}
\nonumber \\
&\le \frac{1}{L} + \frac{2}{L} \sum_{k=1}^{\infty} e^{-2 \pi^2 \tau (k/L)^2}
\nonumber \\
&\le \frac{1}{L}
    + 2 \sum_{k=1}^{\infty} \int_{(k-1)/L}^{k/L} e^{-2 \pi^2 \tau \xi^2} \, d\xi
\nonumber \\
&= \frac{1}{L}
    + 2 \int_{0}^{\infty} e^{-2 \pi^2 \tau \xi^2} \, d\xi
\nonumber \\
&= \frac{1}{L}
    + \frac{2}{\sqrt{\tau}} \int_{0}^{\infty} e^{-2 \pi^2 \nu^2} \, d\nu.
\end{align}

Therefore,
\begin{align}
\label{eq:S_tau_bound_1}
\|S(\tau)\|_{\psi_2}^2
\le C L^4\frac{\sigma^2}{n^2}
    \left(\frac{1}{L}
    + \frac{2}{\sqrt{\tau}} \int_{0}^{\infty} e^{-2 \pi^2 \nu^2} \, d\nu\right)^2
\le C L^4\frac{\sigma^2}{n^2} \frac{1}{\tau},
\end{align}
for a possibly different constant $C>0$,
and where we have used that
$\sqrt{\tau} \le \sqrt{\tau_0} \le L$
(indeed, $L = L_0 + 2\sqrt{\tau_0 \log(1/\epsilon)}$
for small $\epsilon > 0$).

Of course, we also have the more trivial bound
\begin{align}
\frac{1}{L}\sum_{k=-n/2}^{n/2-1} e^{-2 \pi^2 \tau (k/L)^2}
\le \frac{n}{L},
\end{align}
from which it follows that
\begin{align}
\label{eq:S_tau_bound_2}
\|S(\tau)\|_{\psi_2}^2
\le C L^4\frac{\sigma^2}{n^2}
    \left(\frac{1}{L}\sum_{k=-n/2}^{n/2-1} e^{- 2 \pi^2 \tau (k/L)^2}\right)^2
\le C L^4\frac{\sigma^2}{n^2} \frac{n^2}{L^2}
= C L^2 \sigma^2.
\end{align}

To bound the sub-Gausian norm of $\Pi_n$
itself,
we now have the following lemma,
whose proof we provide for
the convenience of the reader.

\begin{lem}
\label{lem:subgaussian_pnorm}
Suppose $X_1,\dots,X_m$ are
sub-Gaussian random variables,
$p \ge 1$,
and $X = \left(\sum_{i=1}^{m} |X_i|^p \right)^{1/p}$.
Then $X$ is sub-Gaussian, with
\begin{align}
\|X\|_{\psi_2} \le C \sqrt{p} \left(\sum_{i=1}^{m} \|X_i\|_{\psi_2}^p \right)^{1/p},
\end{align}
where $C>0$ is a universal constant.
\end{lem}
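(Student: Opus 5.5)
We work through moments, using the characterization of the sub-Gaussian norm by the growth of $L^q$ norms: there are universal constants such that $\|Y\|_{\psi_2} \asymp \sup_{q \ge 1} \|Y\|_{L^q}/\sqrt{q}$ (Proposition 2.6.6 of \cite{vershynin2025high}). Write $K_i = \|X_i\|_{\psi_2}$, so $\|X_i\|_{L^q} \le C K_i \sqrt{q}$ for every $q \ge 1$. It then suffices to show that for every $q \ge 1$,
\begin{align}
\|X\|_{L^q} \le C \sqrt{p}\,\sqrt{q} \left(\sum_{i=1}^m K_i^p\right)^{1/p}.
\end{align}
No independence is assumed, so the argument must rely only on the triangle inequality in suitable $L^s$ spaces.

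First treat $q \ge p$. Then $\|X\|_{L^q} = \| \sum_i |X_i|^p \|_{L^{q/p}}^{1/p}$, and since $q/p \ge 1$, Minkowski's inequality in $L^{q/p}$ gives
\begin{align}
\|X\|_{L^q}^p \le \sum_i \||X_i|^p\|_{L^{q/p}} = \sum_i \|X_i\|_{L^q}^p \le \sum_i (C K_i \sqrt{q})^p.
\end{align}
Taking $p$-th roots yields $\|X\|_{L^q} \le C\sqrt{q}\,(\sum_i K_i^p)^{1/p}$, which is even better than needed.

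Next treat $1 \le q < p$. By monotonicity of $L^q$ norms on a probability space, $\|X\|_{L^q} \le \|X\|_{L^p}$. The previous case with $q = p$ gives $\|X\|_{L^p} \le C\sqrt{p}\,(\sum_i K_i^p)^{1/p}$, and since $q \ge 1$, this is at most $C\sqrt{p}\sqrt{q}\,(\sum_i K_i^p)^{1/p}$. This step is where the factor $\sqrt{p}$ enters.

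Combining the two cases, $\sup_{q\ge1} \|X\|_{L^q}/\sqrt{q} \le C\sqrt{p}\,(\sum_i K_i^p)^{1/p}$, and the moment characterization converts this into the stated bound on $\|X\|_{\psi_2}$. The only delicate point is keeping the constants universal. The moment characterization of $\psi_2$ has universal constants, and the Minkowski step introduces no dependence on $m$ or $p$, so this is fine. I expect the main obstacle to be the low-moment regime $q < p$. A naive bound there, for instance via $\ell^p \le \ell^1$ or the union bound, would introduce a factor of $m$. Passing up to the $p$-th moment avoids this, at the cost of the $\sqrt{p}$ factor.
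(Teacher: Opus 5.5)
Your proof is correct and takes essentially the same route as the paper's. Both use the moment characterization of $\|\cdot\|_{\psi_2}$. Both handle $q \ge p$ by Minkowski's inequality in $L^{q/p}$ applied to the $|X_i|^p$. Both handle $q < p$ by passing up to the $p$-th moment, which is where the $\sqrt{p}$ factor enters; the paper phrases this step as Jensen's inequality for the concave map $t \mapsto t^{q/p}$, which is exactly your monotonicity of $L^q$ norms.
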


\begin{proof}[Proof of Lemma \ref{lem:subgaussian_pnorm}]
From Proposition 2.6.1 in \cite{vershynin2025high},
the sub-Gaussian norm $\|X\|_{\psi_2}$
is equivalent to the norm
$\|X\|_{\psi_2}^{(1)} = \sup_{r \ge 1} r^{-1/2} (\EE[|X|^r])^{1/r}$,
in the sense that $\|X\|_{\psi_2}^{(1)} / \|X\|_{\psi_2}$
is bounded above and below by universal constants.
Fix $r \ge 1$. We consider two cases.

\item
\paragraph{Case 1: $r \le p$.}
First, as just remarked, by Proposition 2.6.1 in \cite{vershynin2025high}
there is a universal $C>0$
such that
\begin{align}
\EE  \left[ |X_i|^p \right]
\le C^p p^{p/2} \|X_i\|_{\psi_2}^p.
\end{align}
Because $r/p \le 1$, by Jensen's inequality we have
\begin{align}
\EE\left[ X^r\right]
&\le \EE\left[ \left(\sum_{i=1}^{m} |X_i|^p \right)^{r/p}\right]
\nonumber \\
&\le \left[ \EE \left(\sum_{i=1}^{m} |X_i|^p \right)\right]^{r/p}
\nonumber \\
&= \left[ \sum_{i=1}^{m} \EE  \left[ |X_i|^p \right] \right]^{r/p}
\nonumber \\
&\le \left[ \sum_{i=1}^{m} C^{p} p^{p/2} \|X_i\|_{\psi_2}^p \right]^{r/p}
\nonumber \\
&= C^{r} p^{r/2} \left[ \sum_{i=1}^{m} \|X_i\|_{\psi_2}^p \right]^{r/p},
\end{align}
and taking $r$-th roots, dividing by $\sqrt{r}$,
and using that $r \ge 1$ and hence $r^{-1/2} \le 1$, we have
\begin{align}
r^{-1/2} (\EE[|X|^r])^{1/r}
\le r^{-1/2} C \sqrt{p} \left[ \sum_{i=1}^{m} \|X_i\|_{\psi_2}^p \right]^{1/p}
\le C \sqrt{p} \left[ \sum_{i=1}^{m} \|X_i\|_{\psi_2}^p \right]^{1/p},
\end{align}
which is the desired bound.

\item
\paragraph{Case 2: $r > p$.}
For clarity, let $Y_i = |X_i|^p$,
and let $q = r/p > 1$.
Again, using
Proposition 2.6.1 in \cite{vershynin2025high},
$(\EE[|X_i|^{r}])^{1/r} \le C \sqrt{r} \|X_i\|_{\psi_2}$
for universal $C>0$.

Using, for instance, Minkowski's inequality
for integrals (e.g.\ Theorem 6.19
of \cite{folland1999real}),
we have the following inequality:
\begin{align}
(\EE[X^r])^{1/r}
&= \left( \EE\left[ \left(\sum_{i=1}^{n} |X_i|^p \right)^{r/p} \right] \right)^{1/r}
\nonumber \\
&= \left(\left( \EE\left[ \left(\sum_{i=1}^{n} Y_i \right)^{q} \right] \right)^{1/q} \right)^{q/r}
\nonumber \\
&\le \left(\sum_{i=1}^{n} (\EE[Y_i^q])^{1/q} \right)^{q/r}
\nonumber \\
&= \left(\sum_{i=1}^{n} (\EE[|X_i|^{r}])^{p/r} \right)^{1/p}
\nonumber \\
&\le \left(\sum_{i=1}^{n} C^p r^{p/2} \|X_i\|_{\psi_2}^p \right)^{1/p}
\nonumber \\
&= C r^{1/2} \left(\sum_{i=1}^{n} \|X_i\|_{\psi_2}^p \right)^{1/p},
\end{align}
and so
\begin{align}
r^{-1/2} (\EE[X^r])^{1/r}
\le C \left(\sum_{i=1}^{n} \|X_i\|_{\psi_2}^p \right)^{1/p},
\end{align}
which is the desired bound.

\end{proof}

Using Lemma \ref{lem:subgaussian_pnorm}
we immediately get the bound
\begin{align}
\| \Pi_n \|_{\psi_2}
= \left\| \left(\sum_{\ell=0}^{M} \tau_\ell^{p \alpha/2} S(\tau_\ell)^p \right)^{1/p}
        \right\|_{\psi_{2}}
\le C \sqrt{p}
    \left(\sum_{\ell=0}^{M} \tau_\ell^{p \alpha/2} \| S(\tau_\ell) \|_{\psi_{2}}^p \right)^{1/p}
\le C \sqrt{p}
    \left(\sum_{\ell=0}^{\infty} \tau_\ell^{p \alpha/2} \| S(\tau_\ell) \|_{\psi_{2}}^p \right)^{1/p},
\end{align}
for universal $C>0$.

Let
\begin{align}
N = \lfloor 2\log_{1/\omega}(n \sqrt{\tau_0} / L) \rfloor.
\end{align}
We break the proof into the three regimes of $\alpha$
appearing in the definition of $\Delta_{\alpha,p}$.

\item
\paragraph{Case 1: $0 < \alpha < 1$.}
First, break the series into two parts:
\begin{align}
\sum_{\ell=0}^{\infty} \tau_\ell^{p \alpha/2} \| S(\tau_\ell) \|_{\psi_{2}}^p
&=
\sum_{\ell=0}^{N} \tau_\ell^{p \alpha/2} \| S(\tau_\ell) \|_{\psi_{2}}^p
    + \sum_{\ell=N+1}^{\infty} \tau_\ell^{p \alpha/2} \| S(\tau_\ell) \|_{\psi_{2}}^p.
\end{align}

For the first sum,
we use \eqref{eq:S_tau_bound_1},
which implies
\begin{align}
\|S(\tau_\ell)\|_{\psi_2}^p
\le C^{p/2} L^{2p} \frac{\sigma^{p}}{n^{p}} \frac{1}{\tau_\ell^{p/2}},
\end{align}
and hence
\begin{align}
\label{eq:504002-1}
\sum_{\ell=0}^{N} \tau_\ell^{p \alpha/2} \| S(\tau_\ell) \|_{\psi_{2}}^p
&\le C^{p/2} L^{2p} \frac{\sigma^{p}}{n^{p}}
        \sum_{\ell=0}^{N} \tau_\ell^{p \alpha/2} \frac{1}{\tau_\ell^{p/2}}
\nonumber \\
&\le C^{p/2} L^{2p} \frac{\sigma^{p}}{n^{p}}
        \sum_{\ell=0}^{N} \tau_\ell^{p (\alpha-1)/2}
\nonumber \\
&= C^{p/2} L^{2p} \frac{\sigma^{p}}{n^{p}} \tau_0^{p (\alpha-1)/2}
        \sum_{\ell=0}^{N} \omega^{\ell p (\alpha-1)/2}
\nonumber \\
&= C^{p/2} L^{2p} \frac{\sigma^{p}}{n^{p}} \tau_0^{p (\alpha-1)/2}
        \frac{\omega^{N p (\alpha-1)/2}}{1 - \omega^{p (1-\alpha)/2}}
\nonumber \\
&\le C^{p/2} L^{2p} \frac{\sigma^{p}}{n^{p}} \tau_0^{p (\alpha-1)/2}
        \frac{n^{p (1-\alpha)} L^{p (\alpha-1)} \tau_0^{p (1-\alpha)/2}}{1 - \omega^{p (1 - \alpha)/2}}
\nonumber \\
&\le C^{p/2} L^{p(\alpha+1)} \frac{\sigma^{p}}{n^{p\alpha}}
        \frac{1}{1 - \omega^{p (1 - \alpha)/2}}.
\nonumber \\
&\le C^{p/2} L^{p(\alpha+1)} \tau_0^{-p \alpha/2} \frac{\sigma^{p}}{n^{p\alpha}}
        \frac{\tau_0^{p\alpha/2}}{1 - \omega^{p/2}} \cdot \frac{1}{1-\alpha}.
\end{align}

Using the bound
\eqref{eq:S_tau_bound_2},
or equivalently,
$\|S(\tau)\|_{\psi_2}^p \le C^{p/2} L^{p} \sigma^{p}$,
the second sum can be bounded by
\begin{align}
\label{eq:504002-2}
\sum_{\ell=N+1}^{\infty} \tau_\ell^{p \alpha/2} \| S(\tau_\ell) \|_{\psi_{2}}^p
&\le C^{p/2} L^{p} \sigma^{p}\sum_{\ell=N+1}^{\infty} \tau_\ell^{p \alpha/2}
\nonumber \\
&= C^{p/2} L^{p} \sigma^{p} \tau_0^{p \alpha / 2}\sum_{\ell=N+1}^{\infty} \omega^{\ell p \alpha/2}
\nonumber \\
&\le C^{p/2} L^{p} \sigma^{p} \tau_0^{p \alpha / 2}
    \frac{\omega^{(N+1) p \alpha / 2}}{1- \omega^{p \alpha / 2}}
\nonumber \\
&\le C^{p/2} L^{p} \sigma^{p} \tau_0^{p \alpha / 2}
    \frac{n^{-p \alpha} L^{p \alpha} \tau_0^{-p \alpha/2}}{1 - \omega^{p \alpha/2}}
\nonumber \\
&= C^{p/2} L^{p(\alpha+1)} \tau_0^{-p \alpha/2} \frac{\sigma^{p} }{n^{p \alpha}}
    \frac{\tau_0^{p \alpha/2}}{1- \omega^{p \alpha / 2}}
\nonumber \\
&\le C^{p/2} L^{p(\alpha+1)} \tau_0^{-p \alpha/2} \frac{\sigma^{p} }{n^{p \alpha}}
    \frac{\tau_0^{p \alpha/2}}{1- \omega^{p / 2}} \cdot \frac{1}{\alpha}.
\end{align}

Adding \eqref{eq:504002-1} and \eqref{eq:504002-2}
\begin{align}
p^{-p/2} \| \Pi_n \|_{\psi_2}^p
&\le C^{p/2} L^{p(\alpha+1)} \tau_0^{-p \alpha/2} \frac{\sigma^{p}}{n^{p\alpha}}
        \frac{\tau_0^{p\alpha/2}}{1 - \omega^{p/2}} \cdot \frac{1}{1-\alpha}
    + C^{p/2} L^{p(\alpha+1)} \tau_0^{-p \alpha/2} \frac{\sigma^{p} }{n^{p \alpha}}
    \frac{\tau_0^{p \alpha/2}}{1- \omega^{p / 2}} \cdot \frac{1}{\alpha}
\nonumber \\
&= C^{p/2} L^{p(\alpha+1)} \tau_0^{-p \alpha/2} \frac{\sigma^{p}}{n^{p\alpha}}
    \frac{\tau_0^{p\alpha/2}}{1 - \omega^{p/2}} \left( \frac{1}{1-\alpha} + \frac{1}{\alpha} \right)
\nonumber \\
&= C^{p/2} L^{p(\alpha+1)} \tau_0^{-p \alpha/2} \frac{\sigma^{p}}{n^{p\alpha}}
    \frac{\tau_0^{p\alpha/2}}{1 - \omega^{p/2}} \cdot \frac{1}{\alpha-\alpha^2}.
\end{align}
Taking $p$-th roots, and for a different universal $C>0$,
\begin{align}
p^{-1/2} \| \Pi_n \|_{\psi_2}
&\le C \cdot L \cdot \tau_0^{\alpha/2} \left(\frac{1}{1 - \omega^{p/2}} \right)^{1/p}
    \cdot L^\alpha \cdot \tau_0^{- \alpha/2} \cdot \frac{\sigma}{n^{\alpha}}
    \cdot \frac{1}{(\alpha-\alpha^2)^{1/p}}
\nonumber \\
& \le C \cdot L \cdot \sigma \cdot \frac{\tau_0^{\alpha/2}}{1 - \omega^{1/2}}
    \cdot \Delta_{\alpha,p}(L / n \sqrt{\tau_0}),
\end{align}
which is the desired bound.

\item
\paragraph{Case 2: $\alpha = 1$.}

In this case, we again split the sum into two parts:
\begin{align}
\sum_{\ell=0}^{\infty} \tau_\ell^{p /2} \| S(\tau_\ell) \|_{\psi_{2}}^p
&=
\sum_{\ell=0}^{N} \tau_\ell^{p/2} \| S(\tau_\ell) \|_{\psi_{2}}^p
    + \sum_{\ell=N+1}^{\infty} \tau_\ell^{p /2} \| S(\tau_\ell) \|_{\psi_{2}}^p.
\end{align}

Again using \eqref{eq:S_tau_bound_1}, or equivalently
\begin{align}
\|S(\tau_\ell)\|_{\psi_2}^p
\le C^{p/2} L^{2p} \frac{\sigma^{p}}{n^{p}} \frac{1}{\tau_\ell^{p/2}},
\end{align}
and using the inequality \eqref{eq:log_dumb}
which states that $\log(1/t) \ge 1 - t$ for
$0 < t \le 1$,
the first sum can be bounded by
\begin{align}
\label{eq:050402-1}
\sum_{\ell=0}^{N} \tau_\ell^{p /2} \| S(\tau_\ell) \|_{\psi_{2}}^p
&\le C^{p/2} L^{2p} \frac{\sigma^{p}}{n^{p}}
    \sum_{\ell=0}^{N} \tau_\ell^{p /2} \frac{1}{\tau_\ell^{p/2}}
\nonumber \\
&\le C^{p/2} L^{2p} \frac{\sigma^{p}}{n^{p}} N
\nonumber \\
&\le C^{p/2} L^{2p} \frac{\sigma^{p}}{n^{p}} \log_{1/\omega}(n \sqrt{\tau_0} / L)
\nonumber \\
&= C^{p/2} L^{2p} \frac{\sigma^{p}}{n^{p}}
    \frac{\log(n \sqrt{\tau_0} / L)}{\log(1/\omega)}
\nonumber \\
&= C^{p/2} L^{2p} \frac{\sigma^{p}}{n^{p}}
    \frac{(p/2)\log(n \sqrt{\tau_0} / L)}{\log(1/\omega^{p/2})}
\nonumber \\
&= C^{p/2} L^{2p} \frac{\sigma^{p}}{n^{p}}
    \frac{(p/2)\log(n \sqrt{\tau_0} / L)}{1 - \omega^{p/2}}
\end{align}

And again using the bound
$\|S(\tau)\|_{\psi_2}^2
\le C L^2 \sigma^2$
from \eqref{eq:S_tau_bound_2},
the second sum can be bounded by
\begin{align}
\label{eq:050402-2}
\sum_{\ell=N+1}^{\infty} \tau_\ell^{p /2} \| S(\tau_\ell) \|_{\psi_{2}}^p
&\le C^{p/2} L^{p} \sigma^{p} \tau_0^{p /2} \sum_{\ell=N+1}^{\infty} \omega^{\ell p /2}
\nonumber \\
&= C^{p/2} L^{p} \sigma^{p} \tau_0^{p /2} \frac{\omega^{(N+1) p /2}}{1 - \omega^{p/2}}
\nonumber \\
&\le C^{p/2} L^{p} \sigma^{p} \tau_0^{p /2} \frac{\tau_0^{-p/2} n^{-p} L^{p} }{1 - \omega^{p/2}}
\nonumber \\
&= C^{p/2} L^{2p} \frac{\sigma^{p}}{n^p} \frac{ 1}{1 - \omega^{p/2}}
\end{align}

Therefore, adding \eqref{eq:050402-1} and \eqref{eq:050402-2},
\begin{align}
p^{-p/2} \|\Pi_n\|_{\psi_2}^p
&\le C^{p/2} L^{2p} \frac{\sigma^{p}}{n^{p}} \frac{(p/2)\log(n \sqrt{\tau_0} / L)}{1 - \omega^{p/2}}
    + C^{p/2} L^{2p} \frac{\sigma^{p}}{n^p} \frac{ 1}{1 - \omega^{p/2}}
\nonumber \\
&= C^{p/2} \cdot L^{2p} \cdot \tau_0^{- p \alpha/2} \cdot \frac{\sigma^{p}}{n^{p}}
    \cdot \frac{\tau_0^{p \alpha/2}}{1 - \omega^{p/2}} \cdot
        \left( (p/2)\log(n \sqrt{\tau_0} / L) + 1\right)
\end{align}
and taking $p$-th roots, using that $p^{1/p}$ is bounded,
and with a different universal $C>0$, gives
\begin{align}
p^{-1/2}\|\Pi_n\|_{\psi_2}
&\le C \cdot L^{2} \cdot \tau_0^{-  \alpha/2} \cdot \frac{\sigma}{n}
    \cdot \tau_0^{\alpha/2} \cdot \left(\frac{1}{1 - \omega^{p/2}}\right)^{1/p} \cdot
        \left(\log(n \sqrt{\tau_0} / L) + 1\right)^{1/p}
\nonumber \\
&\le C \cdot L^{2} \cdot \tau_0^{-  \alpha/2} \cdot \frac{\sigma}{n}
    \cdot \tau_0^{\alpha/2} \cdot \frac{1}{1 - \omega^{1/2}} \cdot
        \left(\log(n \sqrt{\tau_0} / L) + 1\right)^{1/p}
\nonumber \\
&= C \cdot L \cdot \sigma \cdot  \frac{\tau_0^{\alpha/2} }{1 - \omega^{1/2}} \cdot
    \Delta_{1,p}(L / n\sqrt{\tau_0}),
\end{align}
as desired.

\item
\paragraph{Case 3: $\alpha > 1$.}
We use \eqref{eq:S_tau_bound_1},
or equivalently
\begin{align}
\|S(\tau_\ell)\|_{\psi_2}^p
\le C^{p/2} L^{2p} \frac{\sigma^{p}}{n^{p}} \frac{1}{\tau_\ell^{p/2}},
\end{align}
and write
\begin{align}
\sum_{\ell=0}^{\infty} \tau_\ell^{p \alpha/2} \| S(\tau_\ell) \|_{\psi_{2}}^p
&\le C^{p/2} L^{2p} \frac{\sigma^{p}}{n^{p}}  \sum_{\ell=0}^{\infty} \tau_\ell^{p \alpha/2}
        \frac{1}{\tau_\ell^{p/2}}
\nonumber \\
&= C^{p/2} L^{2p} \frac{\sigma^{p}}{n^{p}}  \sum_{\ell=0}^{\infty} \tau_\ell^{p (\alpha-1)/2}
\nonumber \\
&= C^{p/2} L^{2p} \frac{\sigma^{p}}{n^{p}} \tau_0^{p(\alpha-1)/2}
    \sum_{\ell=0}^{\infty} \omega^{\ell p (\alpha-1)/2}
\nonumber \\
&= C^{p/2} L^{2p} \frac{\sigma^{p}}{n^{p}} \frac{\tau_0^{p(\alpha-1)/2}}{1 - \omega^{p(\alpha-1)/2}}.
\end{align}

Using \eqref{eq:54930201}
from Case 3 of the proof of Theorem \ref{thm:main_projections},
we have
\begin{align}
\left(\frac{1}{1 - \omega^{(\alpha-1)p/2}}\right)^{1/p}
&\le \frac{1}{1 - \omega^{1/2}} \cdot \frac{1}{\min\{1,(\alpha-1)^{1/p}\}}
\end{align}
and so it follows that, for a possibly different universal $C>0$,
\begin{align}
p^{-1/2} \|\Pi_n\|_{\psi_2}
&\le C \cdot \frac{\sigma}{n}
    \cdot L^2  \frac{\tau_0^{ (\alpha - 1)/2} }{(1 - \omega^{(\alpha - 1)p/2})^{1/p}}
\nonumber \\
&\le C \cdot \frac{\tau_0^{\alpha/2}}{1 - \omega^{1/2}} 
    \cdot\tau_0^{-1/2} \cdot \frac{\sigma}{n}
    \cdot L^2   \cdot \frac{1}{\min\{1,(\alpha-1)^{1/p}\}}
\nonumber \\
&= C \cdot L \cdot\sigma\cdot \frac{\tau_0^{\alpha/2}}{1 - \omega^{1/2}} 
    \cdot \Delta_{\alpha,p}(L / n\sqrt{\tau_0}),
\end{align}
which completes the proof of Theorem \ref{thm:main_subgaussian}.

\end{proof}

\begin{figure}[h]
\centering
\includegraphics[scale=.4]{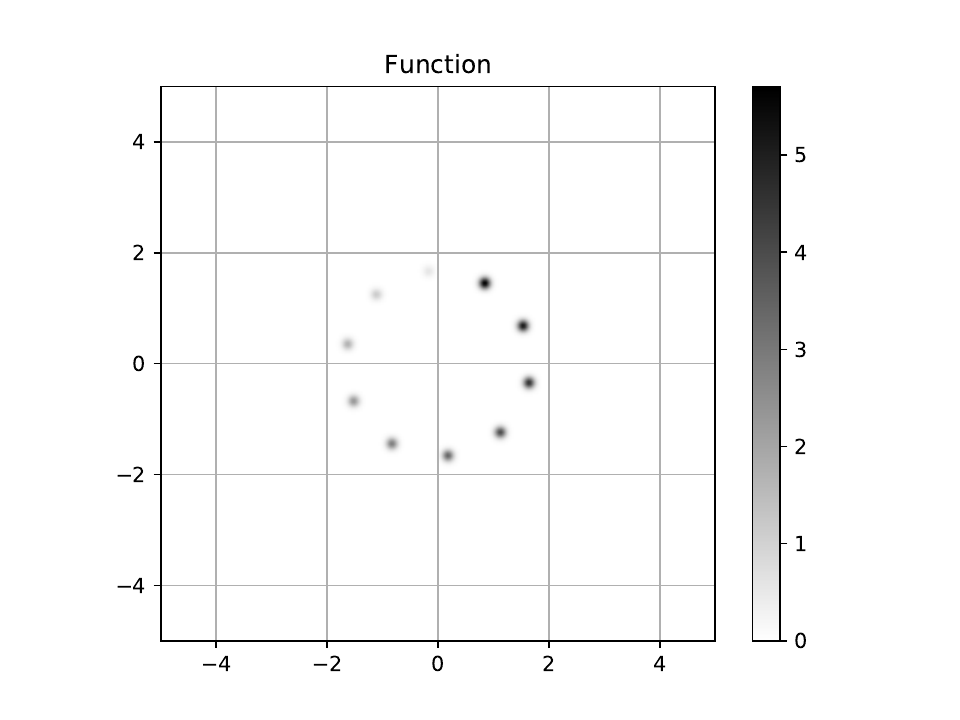}
\includegraphics[scale=.4]{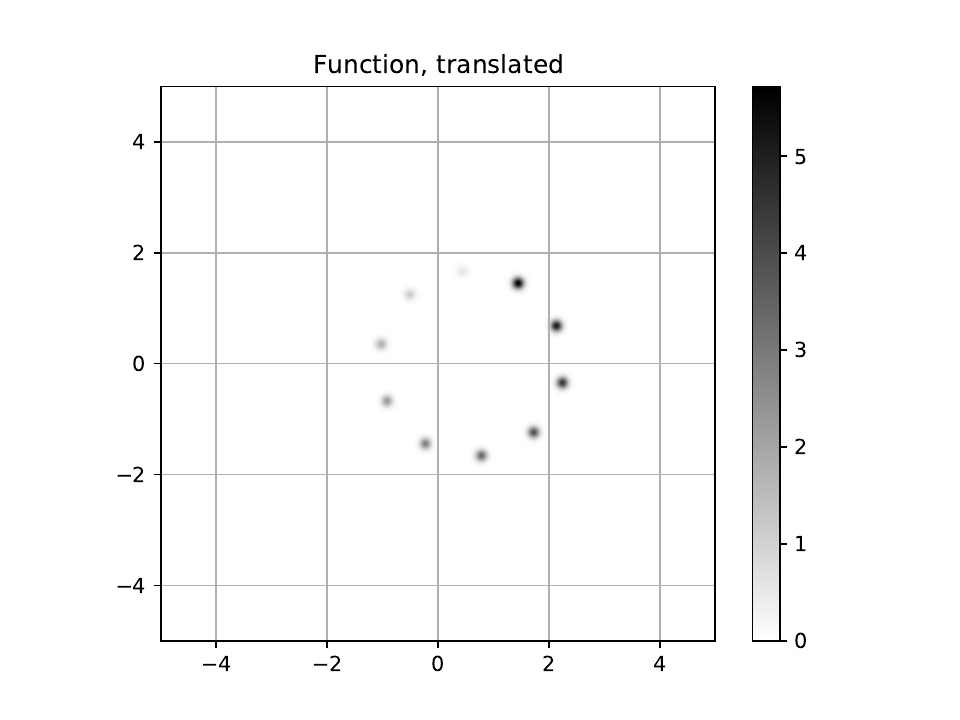}
\caption{The function $f$ described
in Section \ref{sec:transl_rots},
and its translation by $0.6$ to the right.
}
\label{fig:function_ring}
\end{figure}

\begin{figure}[h]
\centering
\includegraphics[scale=.3]{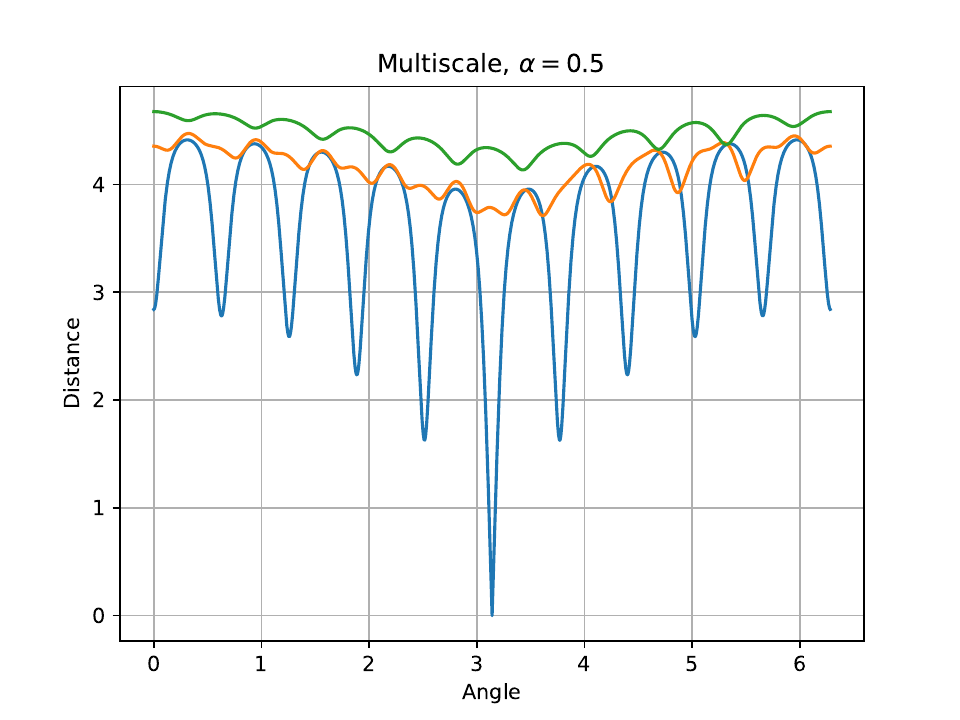}
\includegraphics[scale=.3]{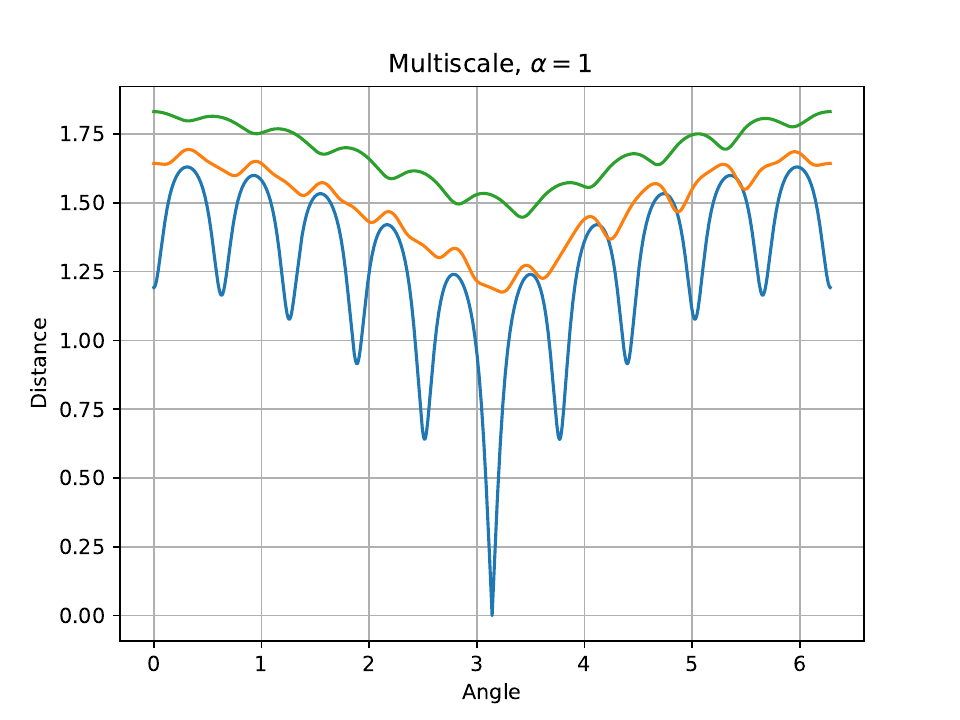}
\includegraphics[scale=.3]{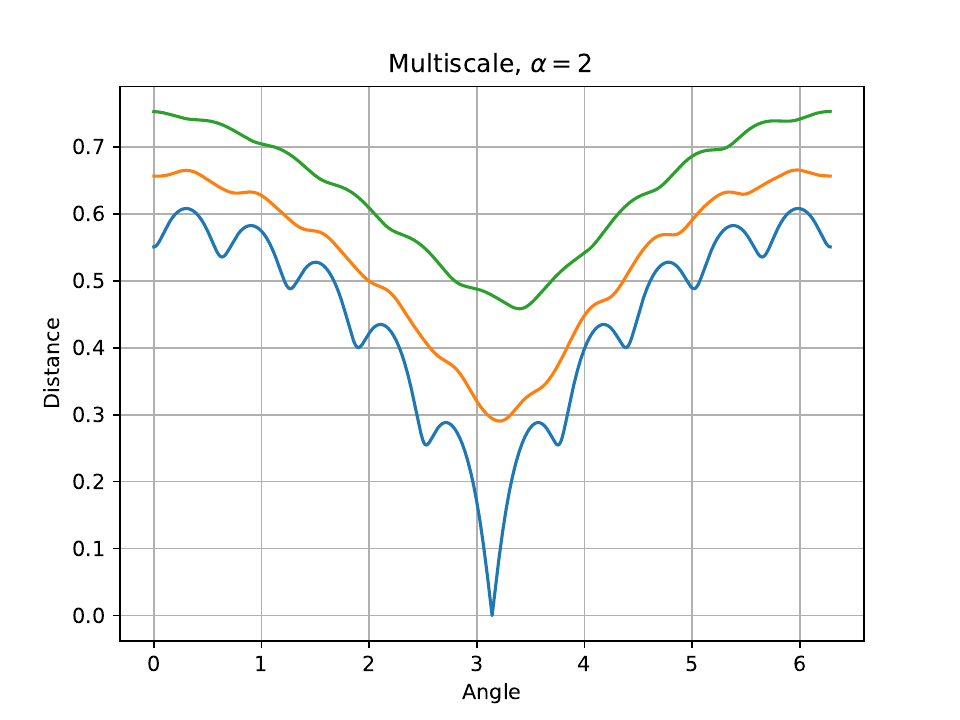}
\\
\includegraphics[scale=.3]{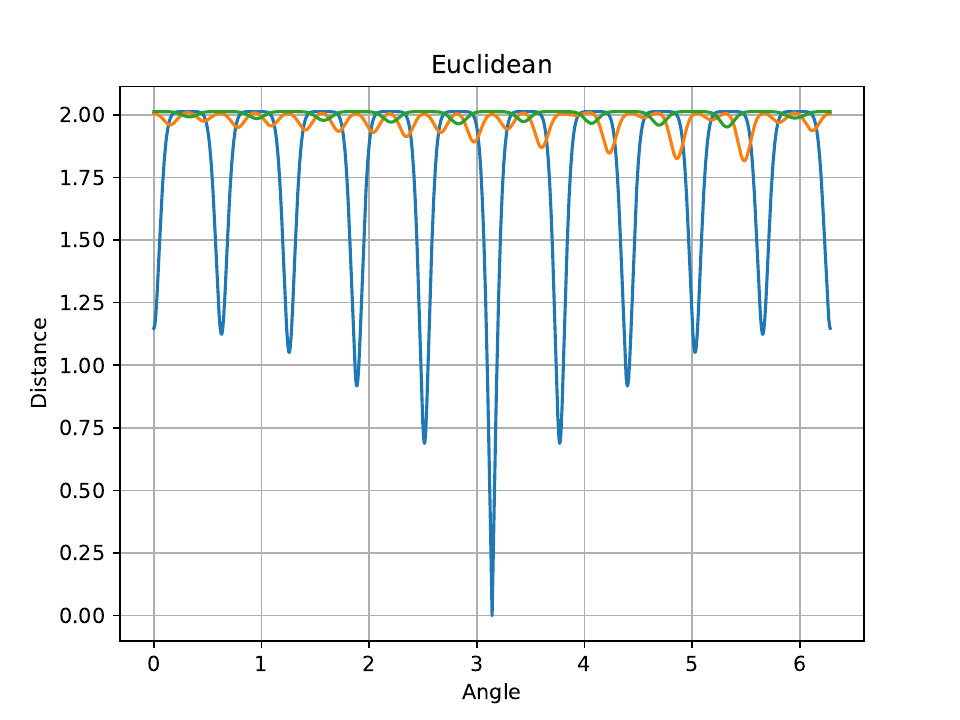}
\includegraphics[scale=.3]{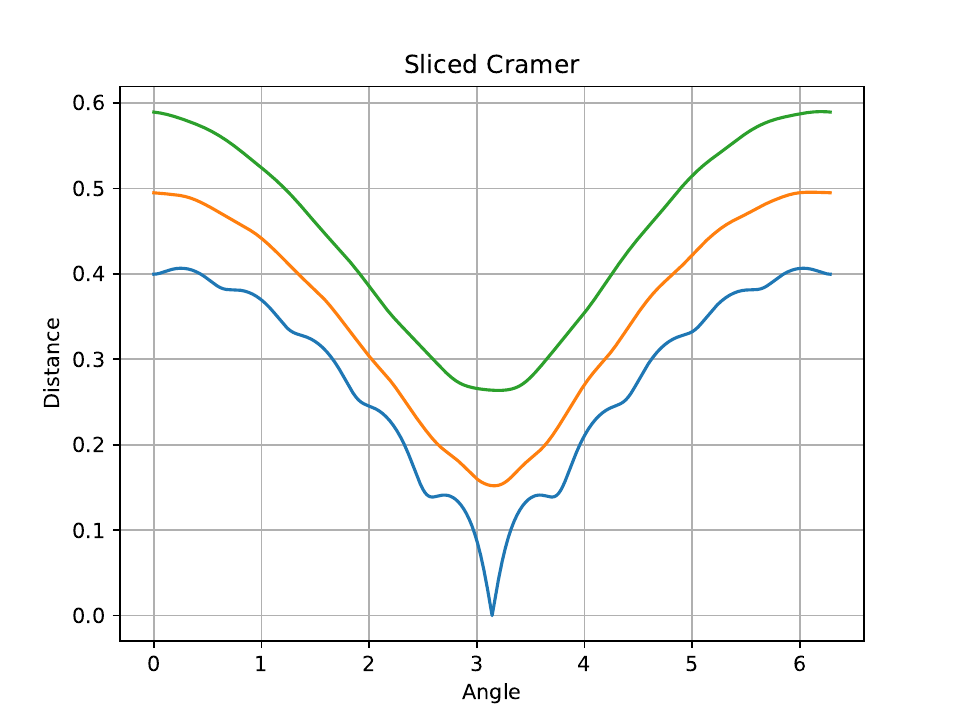}
\includegraphics[scale=.3]{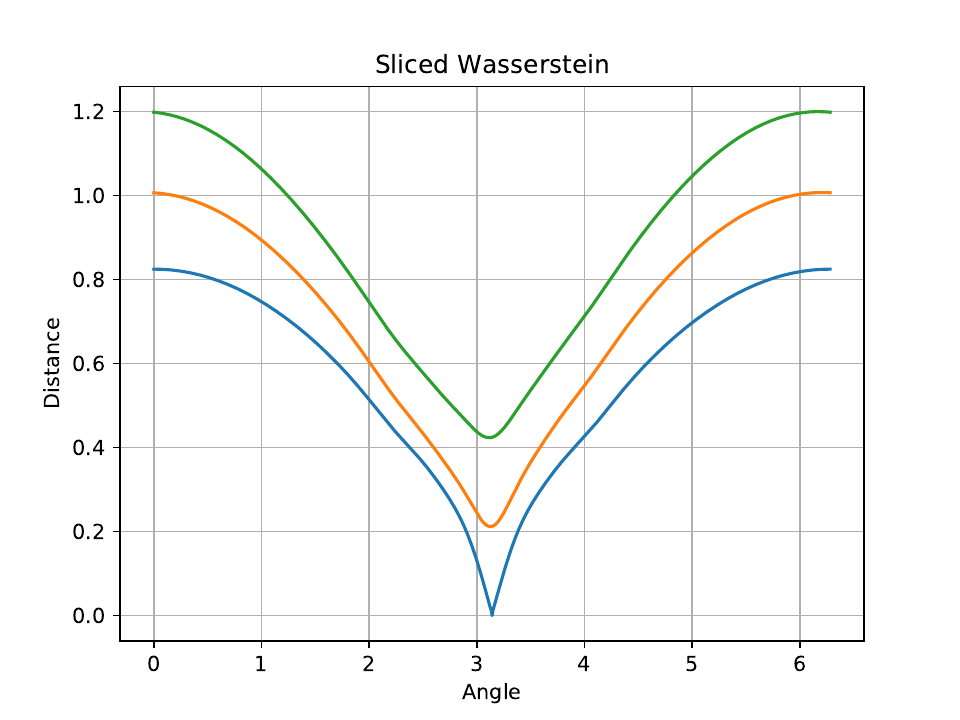}
\caption{Distances for the experiment described
in Section \ref{sec:transl_rots}.
From top to bottom, left to right: $\D_{1/2}$, $\D_{1}$, $\D_{2}$,
Euclidean, sliced Cram\'er, and sliced Wasserstein.
The blue lines are the distances between the function
and itself (rotated by $\pi$ radians so the minimum
value of $0$
occurs in the middle of the plot);
the orange lines are the distances between the function
and its translation by $0.3$;
the green lines are the distances between the function
and its translation by $0.6$.
}
\label{fig:transl_rots_distances}
\end{figure}

\begin{figure}[h]
\centering
\includegraphics[scale=.3]{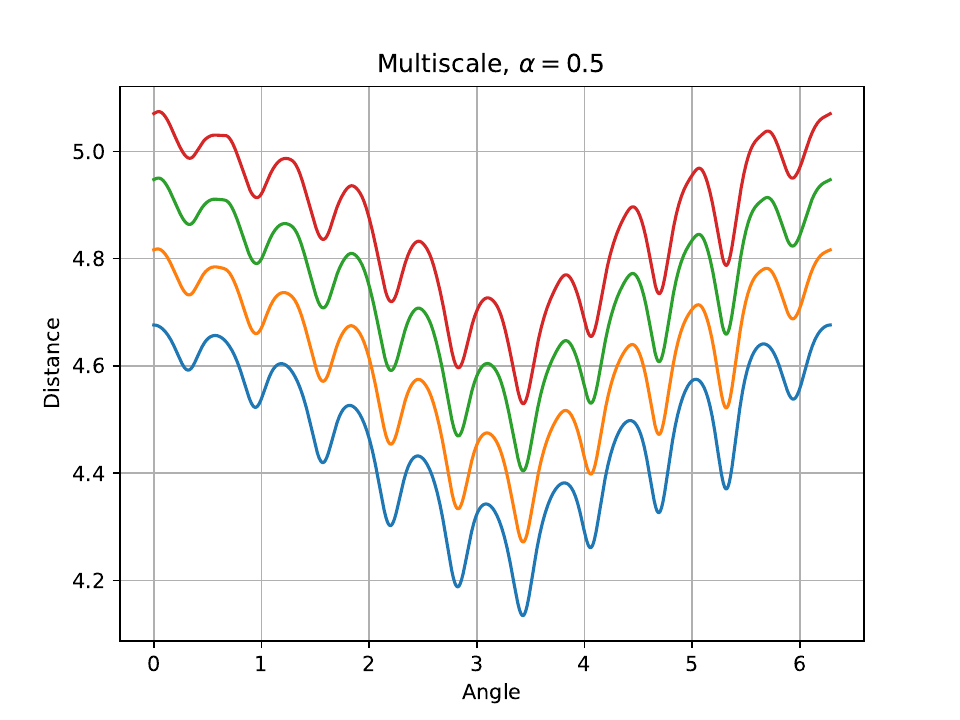}
\includegraphics[scale=.3]{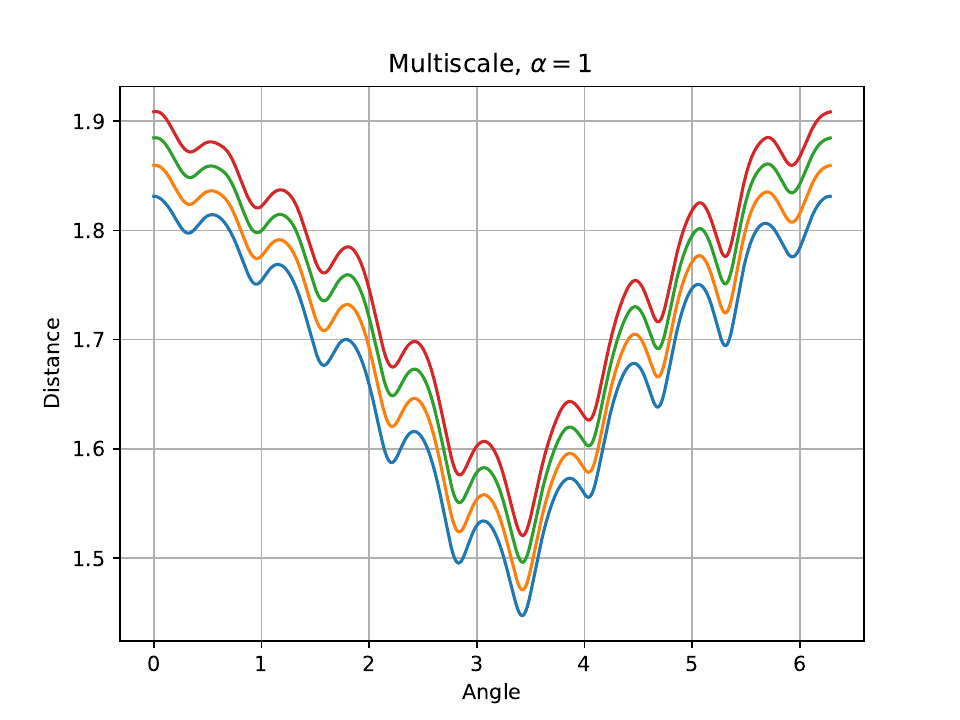}
\includegraphics[scale=.3]{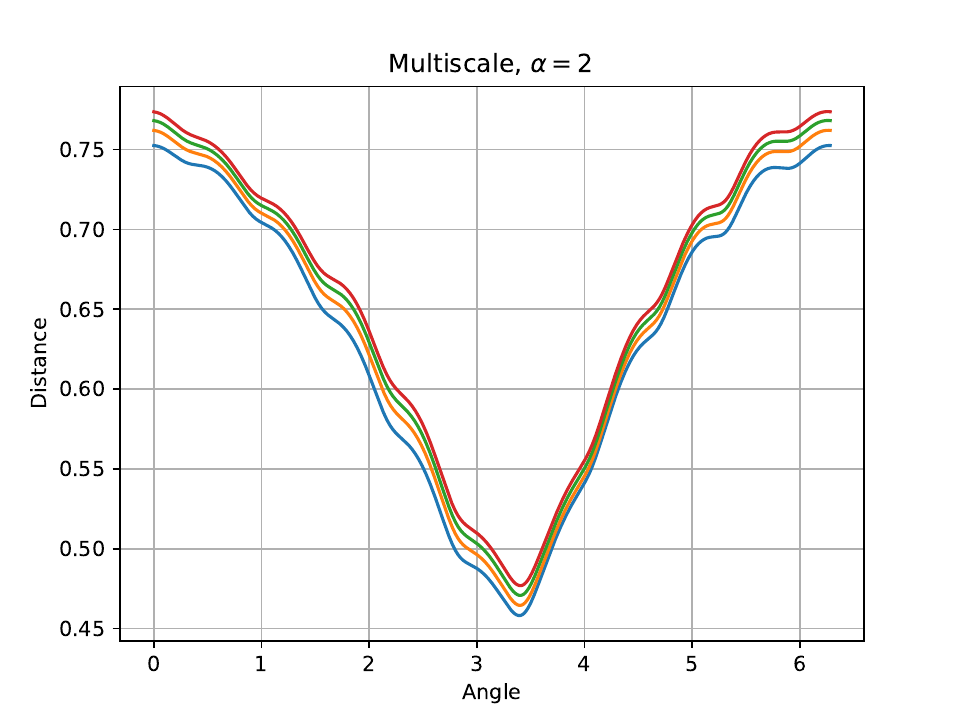}
\\
\includegraphics[scale=.3]{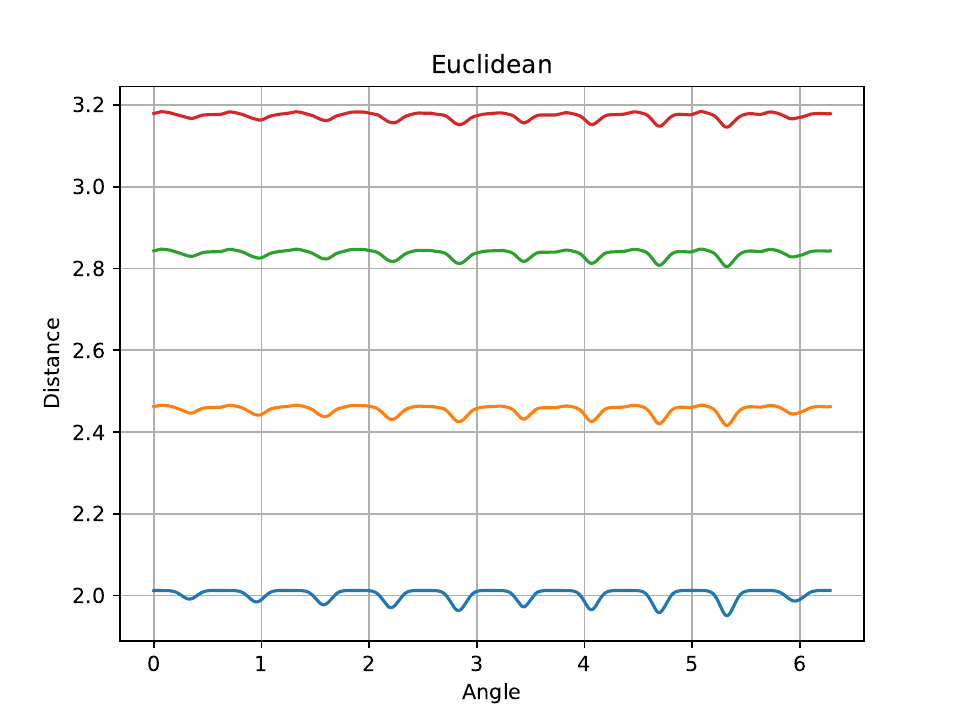}
\includegraphics[scale=.3]{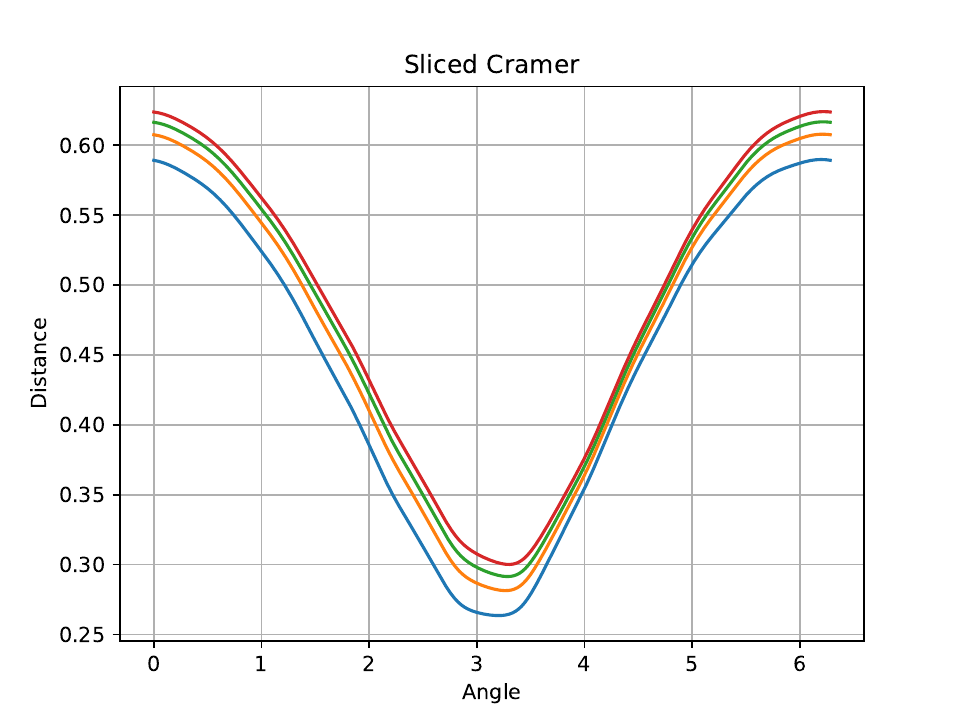}
\caption{Distances for the experiment described
in Section \ref{sec:noise_rots}.
From top to bottom, left to right: $\D_{1/2}$, $\D_{1}$, $\D_{2}$,
Euclidean, and sliced Cram\'er.
The blue lines are the distances between the function
and itself (rotated by $\pi$ radians so the minimum
value of $0$
occurs in the middle of the plot);
the orange lines are the distances between the functions
with noise of maximum variance $0.5$;
the green lines are the distances between the functions
with noise of maximum variance $1$;
the red lines are the distances between the functions
with noise of maximum variance $1.5$.
}
\label{fig:transl_rots_distances}
\end{figure}

\begin{figure}[h]
\centering
\includegraphics[scale=.35]{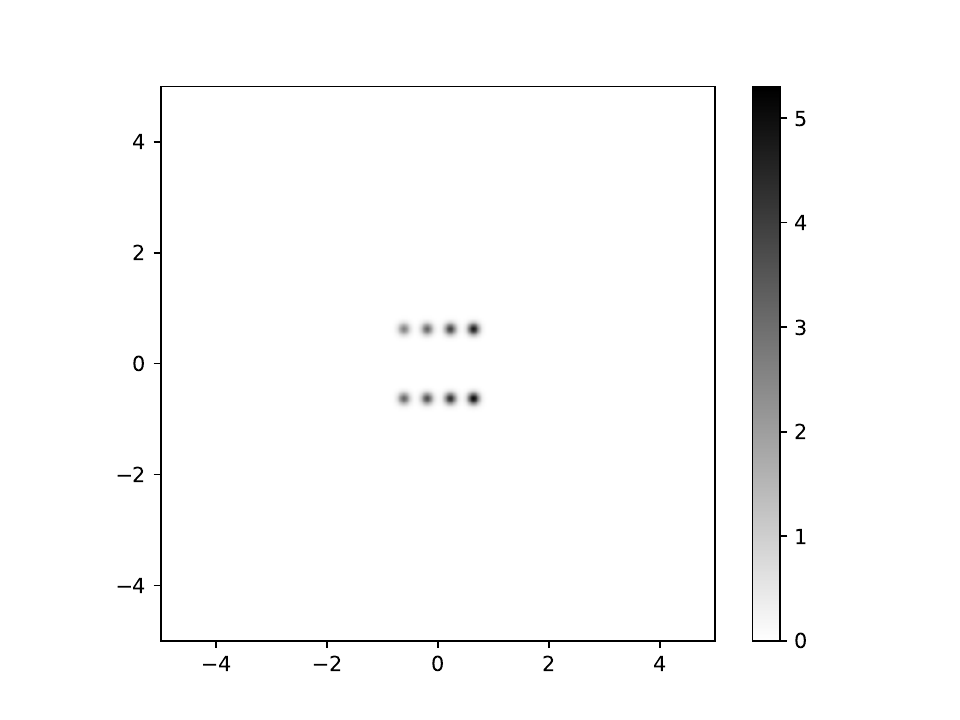}
\includegraphics[scale=.35]{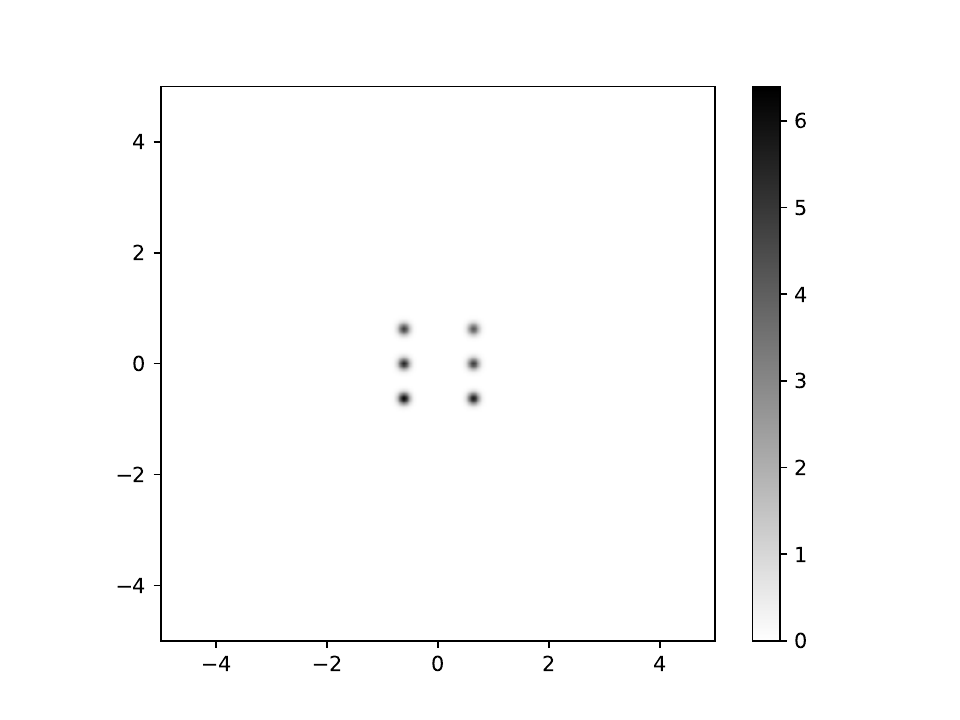}
\\
\includegraphics[scale=.35]{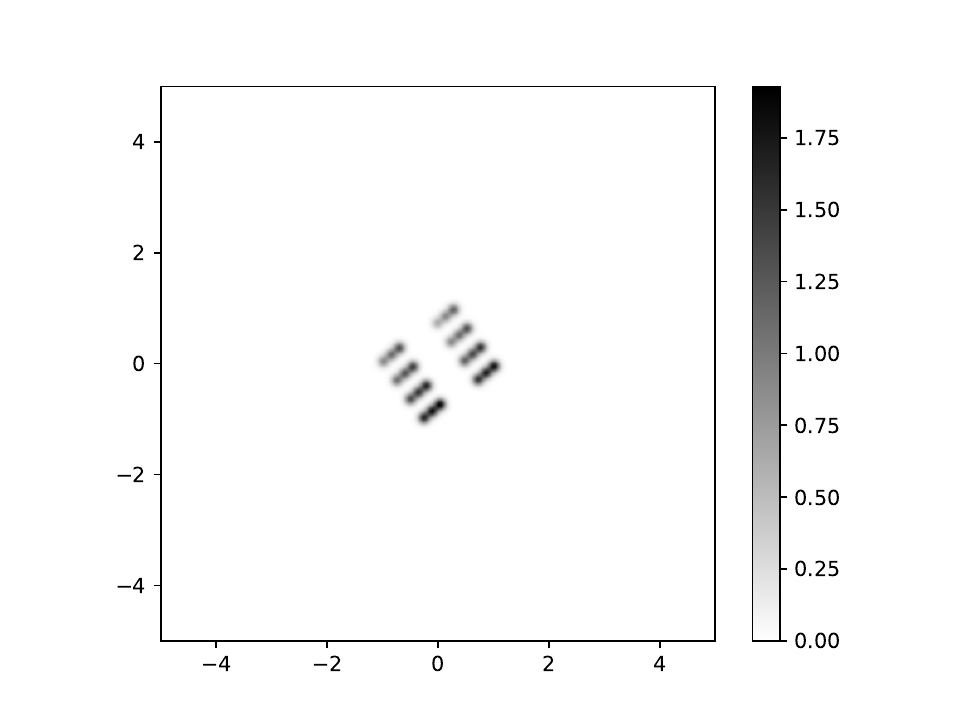}
\includegraphics[scale=.35]{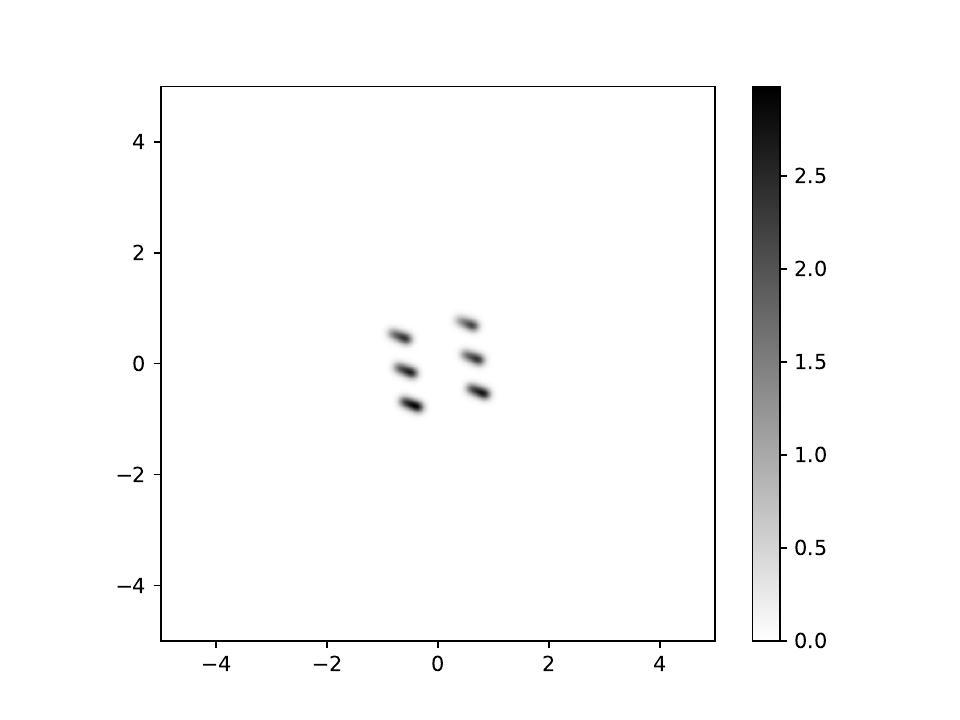}
\\
\includegraphics[scale=.35]{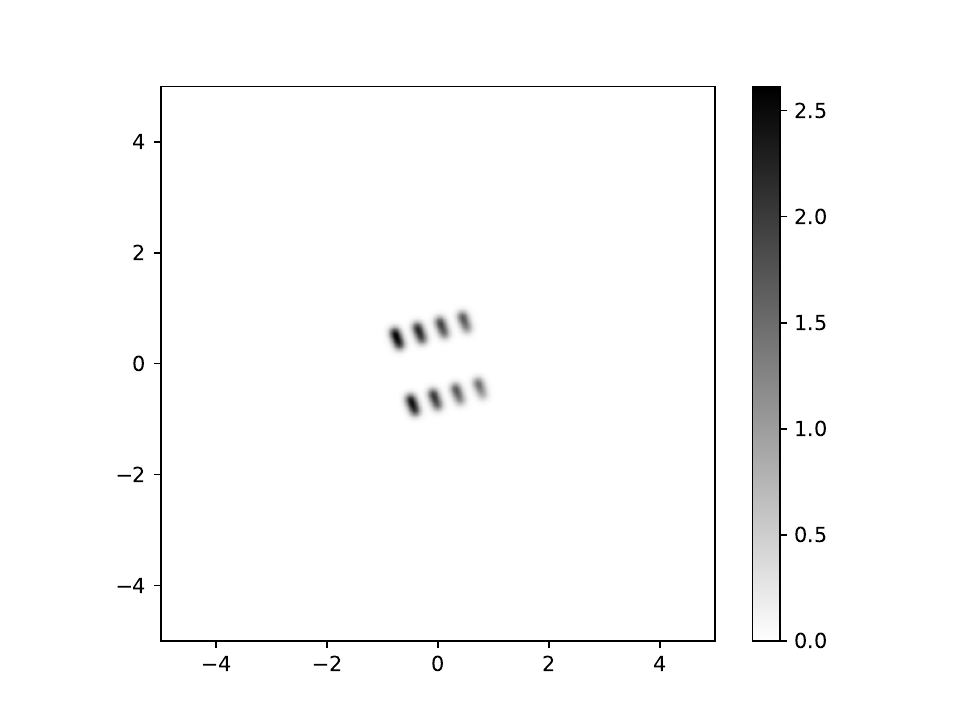}
\includegraphics[scale=.35]{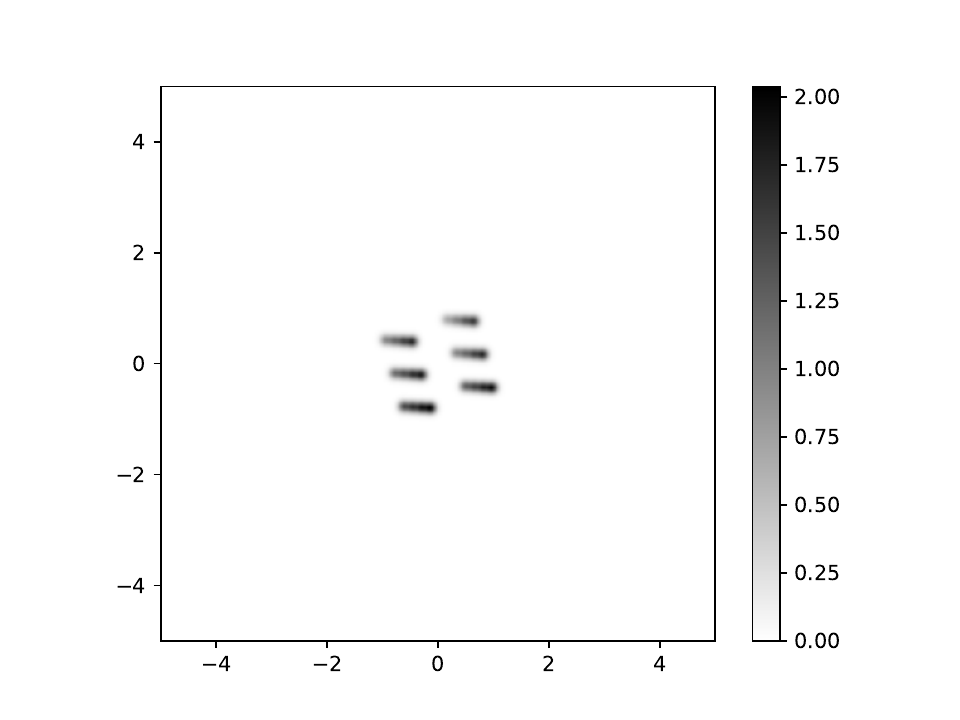}
\caption{Projections of $f$ from the experiment
described in Section \ref{sec:experiment_projections}.
Left column: the projection $g$, and two projections
of $f$ within angle $\theta$ of $u_g$.
Right column: projection $h$, and two projections
of $f$ within angle $\theta$ of $u_h$.
Here, $\theta = 3 \pi / 20$,
and the projections are shown without noise.}
\label{fig:projections}
\end{figure}

\begin{figure}[h]
\centering
\includegraphics[scale=.7]{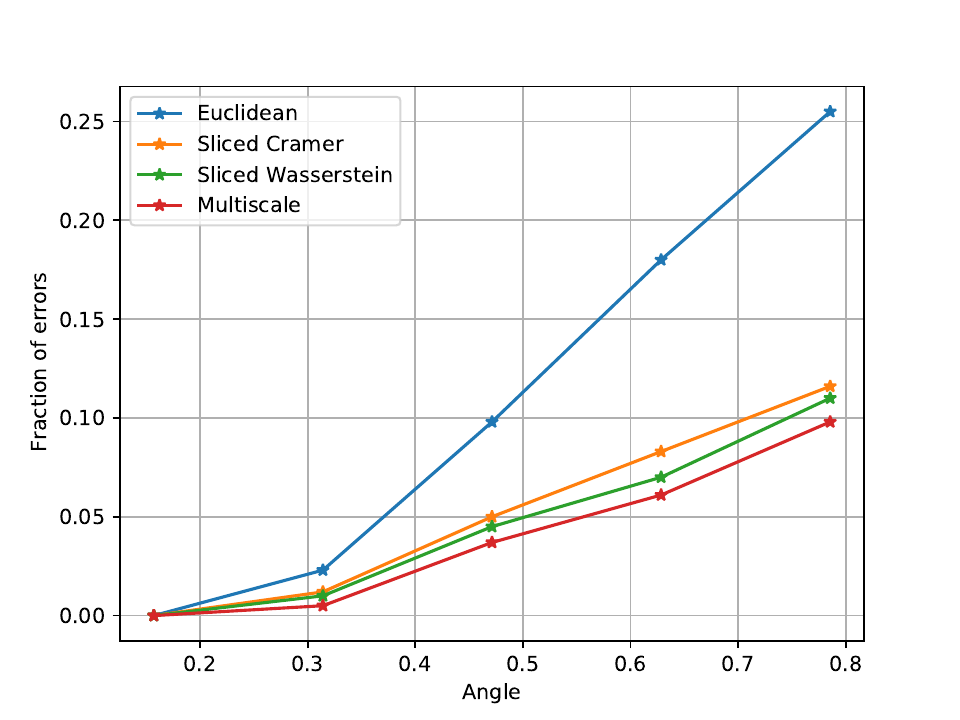}
\caption{Error plot from the experiment
described in Section \ref{sec:experiment_projections}.
}
\label{fig:errs_projs}
\end{figure}

\begin{figure}[h]
\centering
\includegraphics[scale=.7]{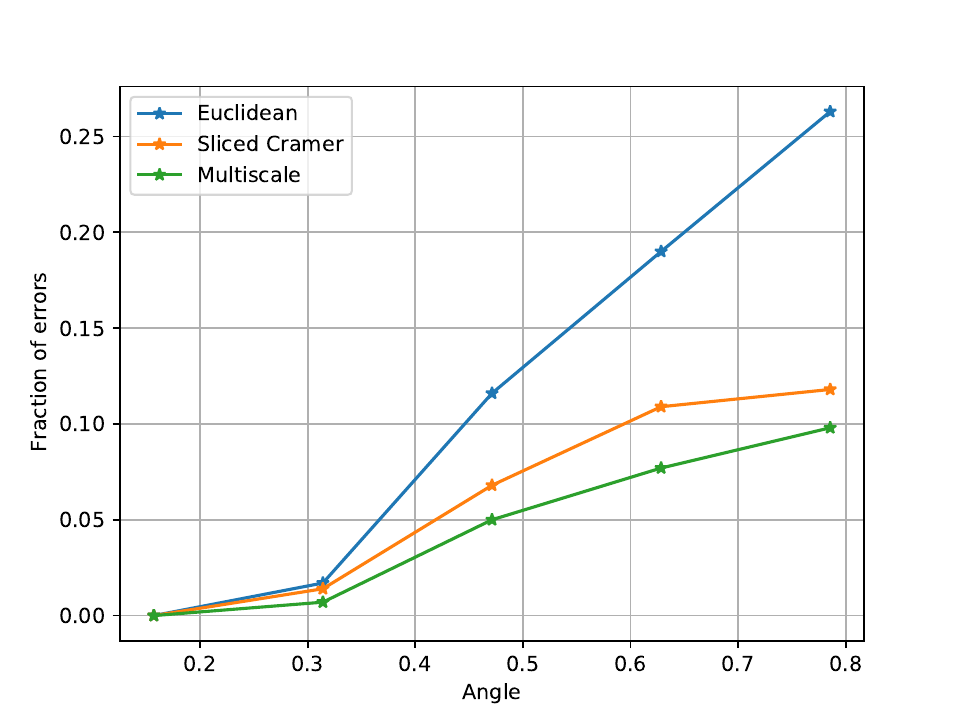}
\caption{Error plot from the experiment
described in Section \ref{sec:experiment_projections},
with noise of standard deviation $0.1$.
}
\label{fig:errs_projs_noise}
\end{figure}

\begin{figure}[h]
\centering
\includegraphics[scale=.7]{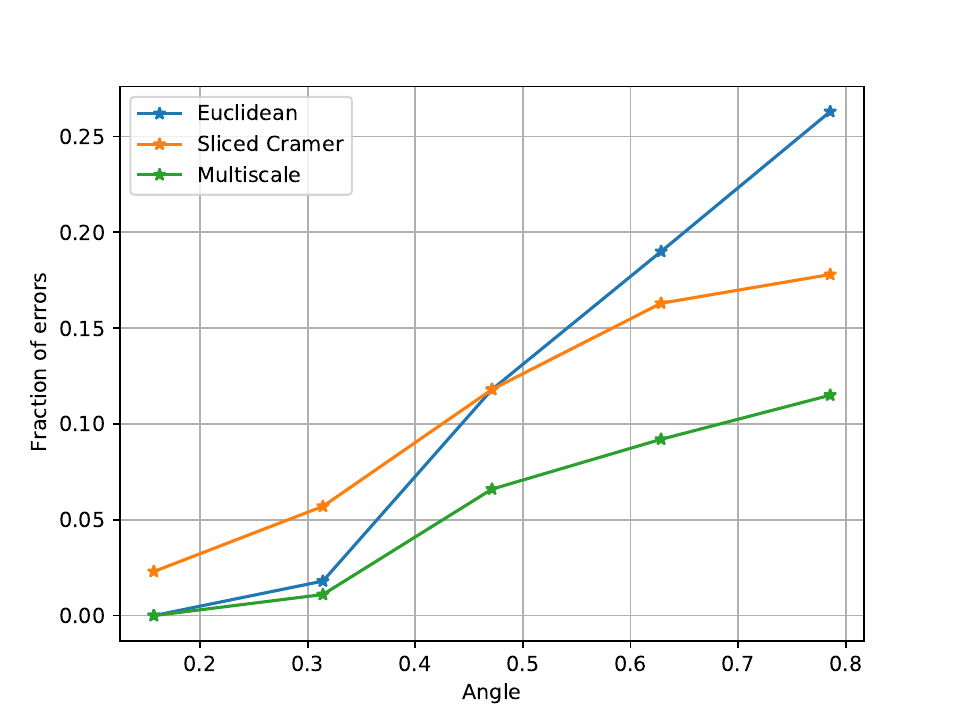}
\caption{Error plot from the experiment
described in Section \ref{sec:experiment_projections},
with noise of standard deviation $0.2$.
}
\label{fig:errs_projs_noise2}
\end{figure}

\section{Numerical experiments}

We illustrate the performance of the rotationally-invariant metrics
$\D_{\alpha,p}(f,g; \, \SO(2))$
(with $r=2$), on several numerical examples.
We compare to the rotationally-invariant Euclidean, sliced $2$-Cram\'er,
and sliced $2$-Wasserstein distances.
For simplicity, we take $p=1$, and denote the metric by $\D_\alpha$.
For our implementations of the rotationally-invariant
sliced Wasserstein and sliced Cram\'er distances,
we use similar implementations as those described
in \cite{shi2025fast}, though we make use of the
obervation from Remark \ref{rmk:finer_grid}
and perform the minimization over a finer grid
than the algorithm described in \cite{shi2025fast}.
The code for rotationally-invariant sliced Cram\'er
and sliced Wasserstein distances
may be found at \url{github.com/wleeb/SlicedCramer}.
The code for the multiscale distances $\D_{\alpha,p}$
(with $r=2$), including the rotationally-invariant
versions, may be found at \url{github.com/wleeb/Gaussian-Kernel-Metrics}.

\subsection{Translations and rotations}
\label{sec:transl_rots}

We consider the function $f$, shown in Figure \ref{fig:function_ring},
consisting of $10$ Gaussians arranged equispaced around a circle,
of varying heights. Each Gaussian is centered at distance $5/3$ from
the origin, with scale parameter $1/100$, and heights
that grow linearly starting from $1/55$ and ending at $10/55$
(the normalization is to ensure the total mass is $1$).
We evaluate distances between the function
shown and all of its rotations, using the fast algorithm
described in Section \ref{sec:discretization_rotations}. We also compute distances between
the original function and its translations to the right by $0.3$
and $0.6$, respectively.
We do this for the distances $\D_{\alpha}$, $\alpha = 0.5, 1, 2$ (with $p=1$)
as well as sliced Wasserstein, sliced Cram\'er, and Euclidean distance.
The distances are shown in Figure \ref{fig:transl_rots_distances}.
As $\alpha$ grows, the behavior of the distances $\D_\alpha$ are
smoother functions of the rotation angle, similar 
to the sliced Wasserstein and sliced Cram\'er distances;
whereas for smaller $\alpha$,
$\D_\alpha$ contains the contours of the Euclidean distance, while
still maintaining an overall envelope similar to that of the
sliced Wasserstein and sliced Cram\'er distances.

\subsection{Noise and rotations}
\label{sec:noise_rots}

We compare the rotations between the two functions shown in Figure \ref{fig:function_ring},
where white noise of varying strengths has been added to the rotated function.
In this experiment, we do not include the sliced Wasserstein distance,
as it is not designed to handle additive noise.
The functions are sampled on grids of size $1592 \times 1592$,
which is substantially greater than is needed to resolve the underlying
noiseless functions.
The variances are, respectively, $0.5$, $1.0$, and $1.5$.
As can be seen, the noise drastically inflates the Euclidean distances;
because the sample size is so large,
the effects on the other metrics are comparatively smaller,
and get smaller with larger $\alpha$, consistent with
Theorem \ref{thm:main_subgaussian}.

\subsection{Projections}
\label{sec:experiment_projections}

We consider a function $f$ in 3D, which is a convex combination
of 24 Gaussian functions arranged in a $4$-by-$3$-by-$2$ grid,
each with scale parameter $1/100$, and heights proportional
to $\sqrt{i^2 + j^2 + k^2}$, where $(i,j,k)$ is the index specifying
each Gaussian, with $i=1,2$, $j=1,2,3$, and $k=1,2,3,4$.
(The heights are normalized so the total mass is $1$, making $f$
a probability density.)
The function $f$ is centered at the origin, with the centers
of the Gaussians
equally spaced along each dimension, ranging from $-5/8$ to $5/8$.

We consider two 2D projections of $f$, whose axes of projection
have angle $\pi/2$; these two functions, which we call $g$ and $h$,
are shown in the top row of Figure \ref{fig:projections}. Denote by $u_g$ the axis
along which $f$ is projected to generate $g$, and similarly denote $u_h$.

In the experiment, for a given angle $\theta$,
we randomly generate two sets of $500$ projections of $f$,
which we denote $S_\theta(g)$ and $S_\theta(h)$,
to which we add independent Gaussian noise of variance $\sigma^2$.
Some example projections without
noise ($\sigma=0$) are shown in the bottom two rows of Figure \ref{fig:projections}.
All the projections in $S_\theta(g)$ (respectively $S_\theta(h)$) are taken
at projection axes that lie within $\theta$ of $u_g$
(respectively $u_h$).

For each projection in $S_\theta(g)$
and $S_\theta(h)$, we compute its distances to both $g$
and $h$, using the rotationally-invariant versions of the $\D_{2}$, Euclidean,
sliced Cram\'er, and, in the $\sigma=0$ case, sliced Wasserstein metrics,
and for each distance assign that projection to its nearest center (either $g$
or $h$).
For the metric $\D_2$, we take $\tau_0 = 25/16$.

In Figure \ref{fig:errs_projs}, we plot the fraction of incorrectly classified
noiseless ($\sigma=0$) projections, as the angle $\theta$ grows,
for all four distances.
In Figures \ref{fig:errs_projs_noise}
and \ref{fig:errs_projs_noise2}
we plot the fraction of incorrectly classified projections,
for $\sigma=0.1$ and $\sigma=0.2$, respectively,
as the angle $\theta$ grows;
in these latter two plots, we exclude the sliced Wasserstein
metrics, which are not designed to handle additive noise.
As one would expect, all errors grow with $\theta$,
since the projections differ more as the angle between
them changes.

Interestingly, in all cases the metric $\D_{2}$
maintains the smallest errors. In
the noiseless case, the errors are close to the errors
of sliced Cram\'er and sliced Wasserstein, all three of
which are considerably smaller than the errors of Euclidean distance.
With non-zero noise, the metric $\D_2$ performs
substantially better than both the Euclidean
and sliced Cram\'er distances.
This is suggestive, though by no means conclusive,
that the multiscale metric may perform well
on the class averaging problem in cryo-EM
\cite{park2011stochastic, hadani2011representation,
bhamre2017mahalanobis, rao2020wasserstein, zhao2014rotationally};
however, additional experimentation is needed
that goes beyond the scope of the current paper
and will be pursued in future work.

\section{Conclusion}

We have shown that the metrics $\D_{\alpha,p,r}$
are stable to geometric deformations,
and that their discretizations are stable under additive noise.
We have also introduced rotationally-invariant versions
of $\D_{\alpha,p,2}$, and we have shown that they may be computed
efficiently from finite samples. Our numerical
results indicate favorable performance
compared to the sliced Wasserstein, sliced Cram\'er,
and Euclidean distances.

There are several open questions that we intend to address
in future work. Perhaps most straightforward
is minimizing the distance over all rigid motions,
not just over $\SO(2)$.
While minimizing over translations alone is fairly
straightforward (more so than over rotations as we have done
in the present work, as it does not require
interpolating to a polar grid), simultaneous minimization over
all rigid motions is a more challenging task,
though we are optimistic that the methods
in \cite{rangan2020factorization, rangan2023radial}
may be applicable.
Second is the question of how to select the parameters
$\alpha$ and $p$;
in particular, larger $\alpha$ puts more weight
on the low-frequency components of the inputs,
which makes the metric more stable but may erase
critical details in the higher frequencies.
On the other hand, larger $p$ approaches the $\infty$
norm across scales, which selects a single
scale to discriminate the two images,
whereas values of $p$ close to $1$ incorporate all scales simultaneously.
We also intend to
extend this work to comparing volumes;
Wasserstein-type distances have been proposed
for this task in the context of cryo-EM \cite{singer2024alignment, zelesko2020earthmover},
and so it is reasonable to generalize the multiscale metrics
studied herein
to apply the same class of problems.
Finally, Theorem \ref{thm:rotations_projections} and
the experiments in Section \ref{sec:experiment_projections}
suggest that in certain cases the rotationally-invariant metrics $\D_{\alpha,p}$
may be good choices for clustering noisy tomographic
images, a task that occurs in the class averaging problem
in cryo-EM \cite{park2011stochastic, hadani2011representation,
bhamre2017mahalanobis, rao2020wasserstein, zhao2014rotationally};
in future work we intend to design and test an end-to-end
clustering algorithm for this task.

\subsection*{Acknowledgements}

I acknowledge funding from NSF CAREER award DMS-2238821
and the McKnight Foundation.

\clearpage
\bibliographystyle{plain}
\bibliography{refs}

\end{document}